\newcommand{\R}{\mathbb{R}}
\newcommand{\C}{\mathbb{C}}
\newcommand{\N}{\mathbb{N}}
\renewcommand{\P}{\mathbb{P}}
\renewcommand{\L}{\mathbb{L}}
\newcommand{\dd}{{\rm \,d}}
\newcommand{\FF}{\mathfrak{F}}
\DeclareMathOperator*{\esssup}{ess\,sup}
\newtheorem{theorem}{Theorem}[section]
\newtheorem{proposition}[theorem]{Proposition}
\newtheorem{lemma}[theorem]{Lemma}
\newtheorem{corollary}[theorem]{Corollary}
\theoremstyle{definition}
\newtheorem{definition}[theorem]{Definition}
\newtheorem{example}[theorem]{Example}
\theoremstyle{remark}
\newtheorem{remark}[theorem]{Remark}
\numberwithin{equation}{section}
\begin{document}

\title{Hexagonal structures in 2D Navier-Stokes flows}

\author{Lorenzo Brandolese\footnote{
Institut Camille Jordan,
Université de Lyon,
Université Lyon 1,
43 bd. du 11 Novembre
69622 Villeurbanne Cedex, France}}

\maketitle

\abstract{
Geometric structures naturally appear in fluid motions. One of the best known examples is \href{https://fr.wikipedia.org/wiki/Hexagone_de_Saturne#/media/Fichier:PIA20513_-_Basking_in_Light.jpg}{\emph{\color{blue}Saturn's Hexagon}}, the huge cloud pattern at the level of Saturn's north pole, remarkable both for the regularity of its shape and its stability during the past decades. In this paper we will address the spontaneous formation of hexagonal structures in planar viscous flows, in the classical setting of Leray's solutions of the Navier--Stokes equations. Our analysis also makes evidence of the isotropic character of the energy density of the fluid for sufficently localized 2D flows in the far field:  it implies, in particular, that fluid particles of such flows are nowhere at rest at large distances.}

\section{Introduction}

\label{sec:point}
%In this paper we study the spatial behavior of Navier--Stokes flows in the whole plane.
%A first contribution of our study consists in showing that, under mild assumptions on the decay of the initial velocity~$u_0$, the quantity $|u(x,t)-u_0(x)|$ is asymptotically radial at the spatial infinity.
%In other words, for any fixed $t>0$, the variation of the velocity field after the time~$t$, has a euclidean norm which tends to be constant in each point of large circles $\{x \in  \R^2 \colon |x| = R\}$, $R\gg1$. Our second contribution illustrates how  hexagonal structures spontaneously appear in any sufficiently localized 2D Navier--Stokes flow.
%

We consider the 2D Navier--Stokes equations, 
\begin{equation}
 \left\{
 \begin{aligned}
 &\partial_t u+\nabla\cdot(u\otimes u)=\Delta u-\nabla p,\\
 &\nabla\cdot u=0\\
 &u(x,0)=u_0(x),
 \end{aligned}
 \right.
 \qquad x\in\R^2,\;t>0
 \tag{NS}
\end{equation}
where $u$ denotes the velocity field and $p$ the pressure.
The initial velocity $u_0$ is given. Throughout this paper, we will assume $u_0\in L^2_\sigma(\R^2)$, the space of $L^2$ and divergence-free vector-fields in $\R^2$.
In this case, it is well known that there exists a unique global Leray's weak solution, \emph{i.e.}, a solution
solving (NS) in the weak sense, such that
$u\in L^\infty((0,\infty),L^2_\sigma(\R^2))\cap L^2((0,\infty),\dot H^1(\R^2))$, and satisfying
the energy equality
\begin{equation}
 \|u(t)\|^2_2+2\int_0^t\|\nabla u(s)\|^2_2\dd s=\|u_0\|_2^2, \qquad t>0.
\end{equation}
This solution is also known to be in $C([0,\infty),L^2_\sigma(\R^2))$ and to solve integral equation \begin{equation}
\label{NSI}
\tag{NSI}
u(x,t)=e^{t\Delta}u_0-\int_0^t e^{(t-s)\Delta}\P\nabla\cdot(u\otimes u)(s)\dd s,
\qquad x\in\R^2,\quad t>0.
\end{equation}
Here, $\P$ is Leray's projector onto divergence-free vector fields and $e^{t\Delta}$ denotes the heat kernel.

%We will adopt the following terminology: 
%we refer to $u(\cdot,t)$ as the \emph{velocity field} and
%to $|u(\cdot,t)|$ the (scalar) speed-field. Morever, 
%we refer to the difference $u(\cdot,t)-u_0$ as the 
%\emph{velocity variation} and to  $|u(\cdot,t)-u_0|$ as the \emph{speed variation}, at the time~$t$.

The purpose of this paper is to show that, in the absence of any external forcing, and without of any special structure of the initial data, 
the flow  reveals regular geometric patterns in the far field. Our main results essentially are the following:

\begin{itemize}
\item[i)] 
%\begin{itemize}
%\item
Under \emph{mild decay} assumptions on $u_0$ and its derivatives at infinity, the euclidean norm of velocity variations, \emph{i.e.}, the quantity
$|u(\cdot,t)-u_0(\cdot)|$, tends to be constant, for a fixed $t>0$, in all the points of circles of large radii. In particular,
if $\zeta,\zeta'\in \mathbb{S}^1$, then
\[
|u(R\zeta,t)-u_0(R\zeta)|\sim|u(R\zeta',t)-u_0(R\zeta')|, \qquad\text{for $R\gg1$}.
\] 
See Theorem~\ref{th:mild} below.
%\item 
Under \emph{stronger decay} assumptions on $u_0$ at infinity (and no decay condition on its derivatives), the 
speed of the fluid $|u(\cdot,t)|$ tends to be constant on circles of large radii:
\[
|u(R\zeta,t)|\sim |u(R\zeta',t)|, \qquad\text{for $R\gg1$}.
\]
This means that the energy density field, $x\mapsto\frac12|u(x,t)|^2$, is asymptotically radial at large distances. 
See Theorem~\ref{th:strong} below. 
A striking corollary is the following: 
\begin{center}
\emph{For generic flows, fluid particles are nowhere at rest at large distances,}
\end{center}
 in the sense that for all time $t>0$, for some $R_t>0$ and all
$|x|\ge R_t$, one has $|u(x,t)|\not=0$.
\item[ii)]
In the case of strongly decaying data, in contrast with the above isotropic behavior of the speed $|u(\cdot,t)|$, the components of the velocity
have a {genuinely anisotropic behavior} in the far field.
Namely, any component $v$ of the velocity field
spontaneously creates a \emph{rigid and regular hexagonal structure}.
More precisely, for any fixed $t>0$, there are exactly six exceptional
directions along which the decay of $v(x,t)$ as $|x|\to\infty$ is faster. Such curious structures appear immediately and, after a time-dependent rescaling,
rigidly rotate during the evolution, without changing their shape. See Figure~\ref{fig1} and Theorem~\ref{th:hexagonal} below.
We will also estimate the angular speed of such structures and show that for generic solutions their angular speed goes to zero as $t\to+\infty$: moreover, when the initial data belong to $\dot H^{-1}(\R^2)$ these structures converge for long time to a stationary position. See Corollary~\ref{cor:larget}.
This corollary will reveal an unexpected geometric interpretation of the classical energy dissipation problem of Leray's solution for large time.  
In the case of initial data with just a mild decay the picture described above is 
sightly different: the hexagonal structures 
appear for each component of the vector field $x\mapsto u(x,t)-u_0(x)$.

\end{itemize}

Our strategy will be to associate to any Leray's solution
a complex valued map 
$z\colon \R\to \C$, 
defined by formula~\eqref{complex} below,
such that $|z(t)|$ is independent on the chosen coordinate system, encoding the most important asymptotic properties of the solution in the far field.
The main results are Theorem~\ref{th:mild} and Theorem~\ref{th:hexagonal},
the latter being probably more surprising.

In this paper we focused on 2D \emph{finite energy} flows, with possibly \emph{poorly localized vorticities}. 
In a companion paper, \cite{Bra-Vor}, we discuss the case of planar flows with 
\emph{well localized vortity}, but possibly \emph{infinite energy}. The two papers are thus complementary. In~\cite{Bra-Vor} we also compute the spatial asymptotics as $|x|\to\infty$ at any order: such higher-order asymptotics reveal more general polygonal structures.

\begin{figure}
\label{fig1}
\hskip-1cm
\includegraphics[scale=0.30]{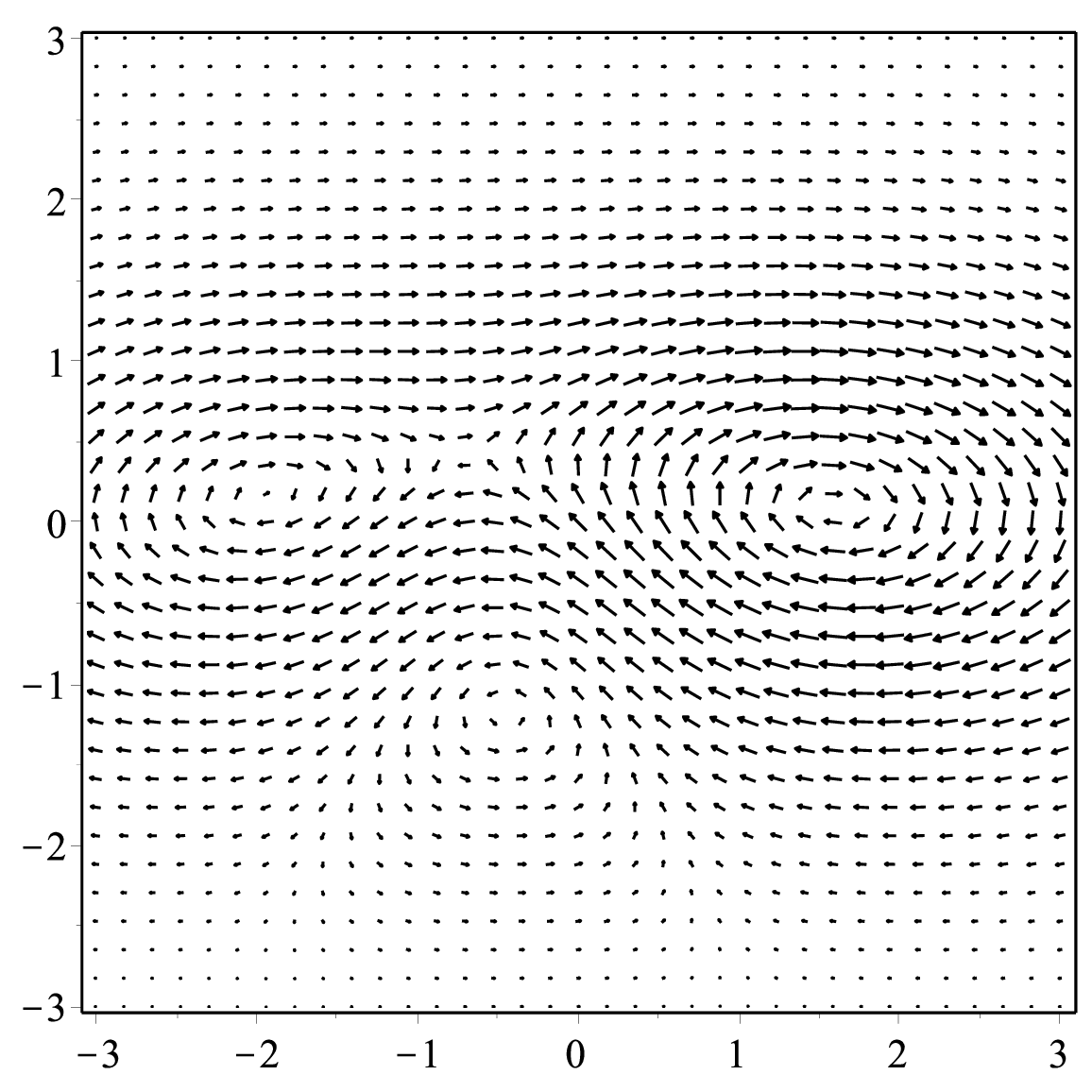}\hfill
\includegraphics[scale=0.30]{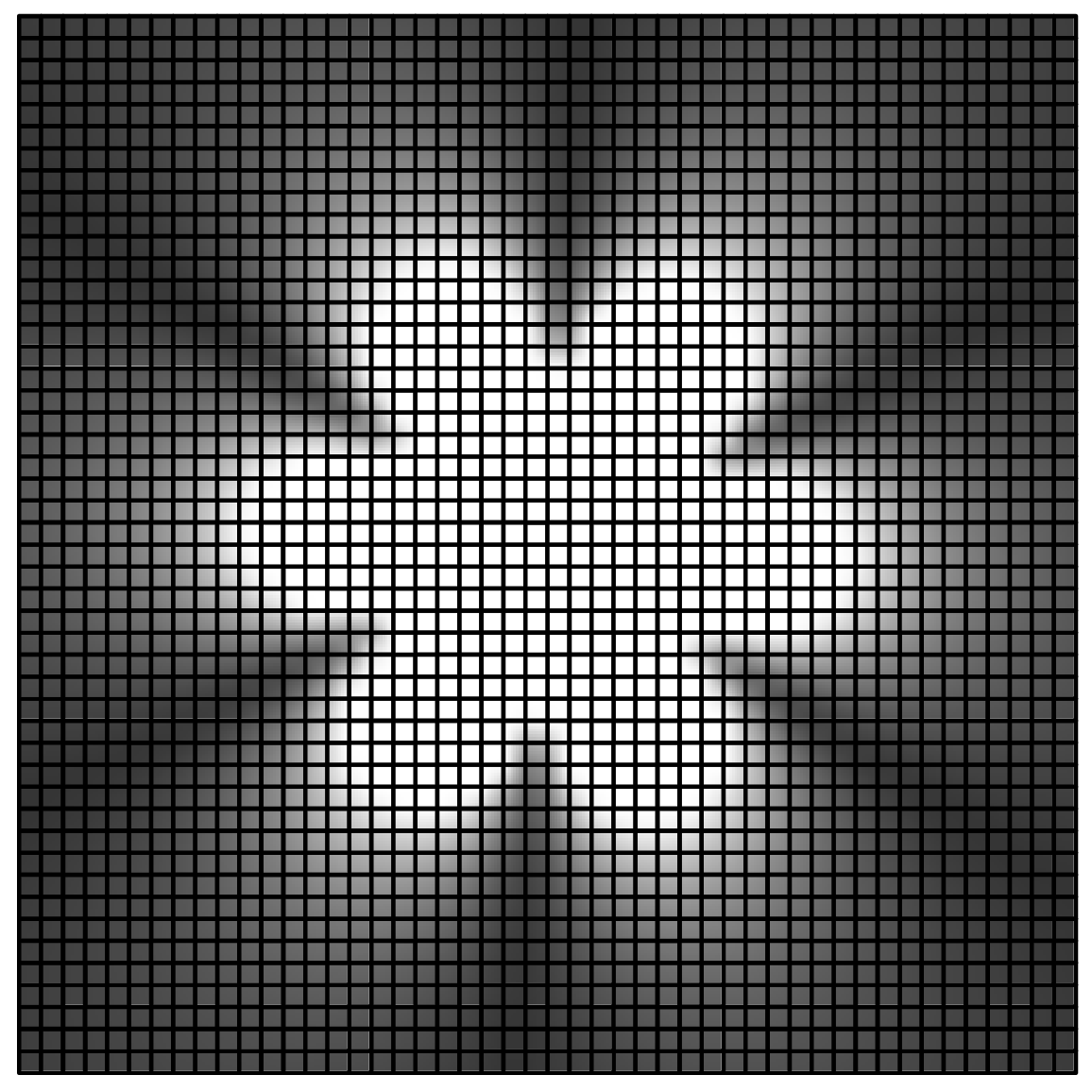}\hfill
\includegraphics[scale=0.30]{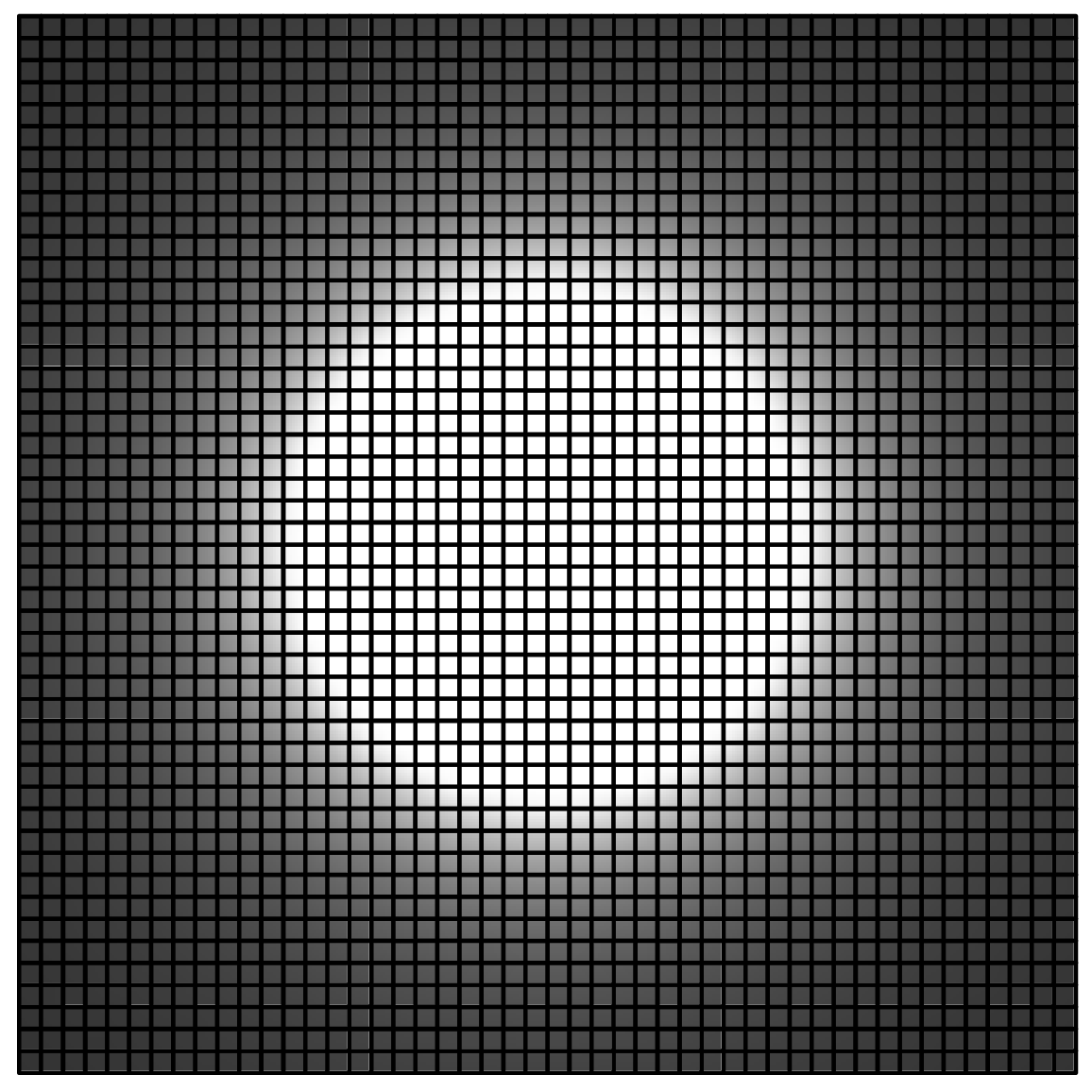}
\caption{\small
Left: a well-localized initial datum with no special symmetry.
Middle: density plot of the horizontal speed $|u_1(\cdot,t)|$, obtained neglecting the lower-order terms at the spatial infinity, in the limit as $t\to0$: the neighbourhoods of the six directions of faster decay are darker. This hexagonal structure, suitably rescaled, is present for any time $0<t<\infty$ and rigidly rotates during the evolution.
If lower-order terms are not neglected, then larger and larger scales are needed to detect this structure as $t$ approaches~$0$ or as~$t$ grows to infinity.
The shape of the structure is essentially the same, excepted for a change in scale and orientation,  for any generic well localized datum and any component of the velocity field. 
Right: in contrast with the speed of the individual components of the velocity field, the
energy density $x\mapsto\frac12|u(x,t)|^2$ is asymptotically radial.}
\end{figure}

\section{Statement of the main results}

A more formal statement of our first assertion in the Introduction
is provided by Theorem~\ref{th:mild} below.
We denote by $\mathbb{S}^1$ the unit circle centered at the origin.

\begin{theorem}
\label{th:mild}
Let $u_0\in L^2_\sigma \cap L^p(\R^2)$ for some $p>4$ and 
\begin{equation}
\label{dec-u0}
\begin{cases}
 u_0(x)=o(|x|^{-1}\log(|x|)^{-1/2})\\
 \nabla u_0(x) = o(|x|^{-2}\log(|x|)^{-1/2})\\
 \Delta u_0(x)=o(|x|^{-3}).
\end{cases}
\text{as $|x|\to\infty$}
\end{equation}
Let $u$ be the unique global Leray's solution starting from $u_0$.
Then, for all $t>0$, the   limit
\begin{equation}
\label{def:L}
 L(t)=\lim_{|x|\to+\infty} |x|^3|u(x,t)-u_0(x)|
\end{equation}
does exist and is given by
\begin{equation}
\label{limit:L}
L(t):=\textstyle\frac1\pi\sqrt{\bigl(\textstyle\int_0^t\!\!\int (u_1^2-u_2^2)\dd y\dd s\bigr)^2
+\bigl(\int_0^t\!\!\int 2u_1u_2\dd y \dd s\bigr)^2}\,.
\end{equation}
In particular, for all $\zeta\in\mathbb{S}^1$, the radial limits
\begin{equation}
\label{def:L2}
\lim_{R\to+\infty} R^3|u(R\zeta,t)-u_0(R\zeta)|
\end{equation}
do exist and are \emph{independent} on~$\zeta$.
\end{theorem}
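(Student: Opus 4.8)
The plan is to run $u_0$ and $u$ through the integral formulation \eqref{NSI} and split
\[
u(x,t)-u_0(x)=\bigl(e^{t\Delta}u_0-u_0\bigr)-\int_0^t e^{(t-s)\Delta}\P\,\nabla\!\cdot(u\otimes u)(\cdot,s)\dd s=:w(x,t)-B(x,t),
\]
showing that $w(x,t)=o(|x|^{-3})$ while $|x|^3B(x,t)$ converges to an explicit limit; then $L(t)=\lim_{|x|\to\infty}|x|^3|B(x,t)|$. For the linear part I would write $w=\int_0^t e^{\tau\Delta}\Delta u_0\dd\tau$ and estimate $e^{\tau\Delta}\Delta u_0$ pointwise by cutting the heat integral at $|y|=|x|/2$: the inner part is $O(e^{-c|x|^2})$ (here $u_0\in L^p$ makes the pairing with the a priori distribution $\Delta u_0$ legitimate), and the outer part is dominated by $\sup_{|y|>|x|/2}|\Delta u_0(y)|=o(|x|^{-3})$; integrating in $\tau$ preserves this.

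For the bilinear term the key is the explicit kernel of $e^{\tau\Delta}\P\,\nabla\!\cdot$. Writing $N(z)=\frac1{2\pi}\log|z|$ for the 2D Newtonian potential ($\Delta N=\delta_0$) and $E$ for the heat kernel, one has componentwise
\[
\bigl(e^{\tau\Delta}\P\,\nabla\!\cdot F\bigr)_j(x)=\int_{\R^2}\Bigl(\partial_kE(x-y,\tau)\,F_{kj}(y)-\partial_j\partial_k\partial_l\bigl(e^{\tau\Delta}N\bigr)(x-y)\,F_{kl}(y)\Bigr)\dd y ,
\]
with summation over repeated indices. Since $e^{\tau\Delta}N-N=\int_0^\tau E(\cdot,\sigma)\dd\sigma$, which together with its derivatives is $O(e^{-c|z|^2/\tau})$, the far field of this kernel is governed entirely by the homogeneous degree $-3$ term $-\partial_j\partial_k\partial_l N(x-y)$; the $\partial_kE$ piece and the $N$‑correction are negligible after integration against the (spatially localized, time‑integrable) tensor $u\otimes u$. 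Separating off the diagonal region $|x-y|<1$ (where the heat‑regularized true kernel must be kept) and Taylor‑expanding $N$ about $x$ on $|x-y|>1$, I would obtain
\[
B_j(x,t)=-\partial_j\partial_k\partial_l N(x)\,M_{kl}(t)+o(|x|^{-3}),\qquad M_{kl}(t)=\int_0^t\!\!\int_{\R^2}(u_ku_l)(y,s)\dd y\dd s,
\]
the tensor $M(t)$ being finite because $u\in L^\infty_tL^2$ by the energy equality.

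The conclusion then follows by an elementary computation. Because $\Delta N=\delta_0$, the contraction $\partial_j\partial_k\partial_lN\cdot\delta_{kl}=\partial_j\Delta N$ vanishes away from the origin, so only the traceless part $M'(t):=M(t)-\tfrac12(\operatorname{tr} M(t))\,I=\begin{pmatrix} b & c\\ c & -b\end{pmatrix}$, with $b=\tfrac12\int_0^t\!\int(u_1^2-u_2^2)$ and $c=\int_0^t\!\int u_1u_2$, contributes. Writing $x=R\sigma$ with $\sigma\in\mathbb S^1$ and differentiating,
\[
\partial_j\partial_k\partial_l N(R\sigma)\,M'_{kl}=\frac{2}{\pi R^3}\bigl[\,2(\sigma^{\top}M'\sigma)\,\sigma-M'\sigma\,\bigr]_j ,
\]
and the identity
\[
\bigl|\,2(\sigma^{\top}M'\sigma)\,\sigma-M'\sigma\,\bigr|^2=|M'\sigma|^2=\sigma^{\top}(M')^2\sigma=b^2+c^2
\]
(the last step by Cayley--Hamilton, $(M')^2=(b^2+c^2)I$) gives $|x|^3|B(x,t)|\to\frac2\pi\sqrt{b^2+c^2}=\frac1\pi\sqrt{\bigl(\int_0^t\!\int(u_1^2-u_2^2)\bigr)^2+\bigl(\int_0^t\!\int 2u_1u_2\bigr)^2}$, which is exactly \eqref{limit:L} and is visibly independent of $\sigma$, yielding also the last assertion about \eqref{def:L2}.

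The main obstacle is the rigorous control of everything hidden in the symbol $o(|x|^{-3})$ above. Near the diagonal one cannot replace the true (heat‑regularized, hence locally integrable) kernel by $\partial^3N$, whose $|z|^{-2}$ and $|z|^{-3}$ singularities are non‑integrable in the plane; the remedy is integration by parts in $y$, exploiting $\nabla\!\cdot u=0$ to shed the worst derivatives — so that $\nabla\!\cdot(u\otimes u)=u\cdot\nabla u$ and $\partial_k\bigl[(\partial_l u_k)u_l\bigr]=(\partial_l u_k)(\partial_k u_l)$ involve only first derivatives of $u$ — and then controlling $\int_{|x-y|<1}\bigl(\log\tfrac1{|x-y|}+|x-y|^{-1}\bigr)\,|u|\,|\nabla u|(y,s)\dd y$ and its companions, together with the ``far'' Taylor error $\int_{|y|<|x|/2}|y|\,|u\otimes u|(y,s)\dd y=o(|x|)$. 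All of these require pointwise, uniform‑in‑$s\in(0,t)$ decay of $u$ and $\nabla u$ of exactly the orders postulated for $u_0,\nabla u_0$ in \eqref{dec-u0} — a persistence‑of‑spatial‑decay property for the Leray solution, which I would establish by a fixed‑point/continuity argument for \eqref{NSI} in a weighted space modelled on the norms in \eqref{dec-u0}, with the energy bound supplying global integrability. It is precisely here, and in making the borderline logarithmic integrals convergent, that the hypotheses $u_0\in L^p$ ($p>4$) and the logarithmic refinements in \eqref{dec-u0} are consumed; this, rather than the (essentially algebraic) computation of the limit, is the technical heart of the proof.
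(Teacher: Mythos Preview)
Your approach is essentially the paper's: split via \eqref{NSI}, show $e^{t\Delta}u_0-u_0=o(|x|^{-3})$ (the paper's Lemma~\ref{lem:h}, case~(iii)), extract the homogeneous part $\mathfrak F=\partial^3E_2$ from the Oseen kernel (the paper's decomposition~\eqref{proARMA}), and invoke persistence of the pointwise decay of $u,\nabla u$ via a weighted fixed-point argument (the paper's Proposition~\ref{prop:dec12}). Your final computation of the limit is actually cleaner than the paper's: you observe that only the traceless part of $M(t)$ contributes and use $(M')^2=(b^2+c^2)I$, whereas the paper expands $\nabla H$ in coordinates~\eqref{nabH} and checks radiality of $|\nabla H|$ by brute force.

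There is, however, one point where your decomposition is too coarse. Restricting the integration-by-parts to the fixed region $|x-y|<1$ is not enough: on the annulus $1<|x-y|<|x|/2$ the kernel is still $\sim|x-y|^{-3}$ while $|u\otimes u|(y)$ is only $o(|y|^{-2}\log^{-1}|y|)$, and the resulting bound is $o(|x|^{-2}\log^{-1}|x|)$, not $o(|x|^{-3})$. The paper handles the entire region $|x-y|\le|x|/2$ (its $\L_2$) at once, using the cancellation $\int_{|y|\le R}F(y,t)\,dy=0$ to subtract $w(x,s)$ and gain a factor $|y|\,|\nabla w|$; equivalently, your IBP must be performed over that whole region. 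The time--space integral then produces an extra $\log|x|$, and it is exactly here that the $\log^{-1/2}$ refinements in~\eqref{dec-u0} are consumed --- which you correctly anticipate. Once this region is handled correctly, the rest of your outline goes through.
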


The decay assumptions~\eqref{dec-u0} are natural for finite energy flows: the first of~\eqref{dec-u0} is nothing but a pointwise analogue of the usual condition $u_0\in L^2(\R^2)$; moreover,  often $\nabla u_0$ and
$\Delta u_0$ decay respectively one and two decay rates faster than $u_0$ (unless $u_0$ has an oscillating behavior at infinity), which is more than needed in the second and third condition of~\eqref{dec-u0}.

We now give a formal statement of what can be inferred in the case of faster decaying data. In this case, no decay of higher-order derivatives is needed. 
\begin{theorem}
 \label{th:strong}
 Let $u_0\in L^2_\sigma\cap L^p(\R^2)$ for some $p>2$.
  Assume also that $u_0(x)=o(|x|^{-3})$ as $|x|\to\infty$.
 Then the unique  global Leray's solution~$u$ starting from $u_0$ satisfies, for all $t>0$,
 \begin{equation}
\label{def:L3}
  L(t)=\lim_{|x|\to+\infty} |x|^3|u(x,t)|
 = \lim_{R\to+\infty} R^3|u(R\zeta,t)| \quad\text{(independent on $\zeta\in\mathbb{S}^1$)},
\end{equation}
where $L(t)$ is given by formula~\eqref{limit:L}.
\end{theorem}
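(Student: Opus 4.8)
The plan is to argue directly from~\eqref{NSI}. We identify every vector $(a_1,a_2)\in\R^2$ with $a_1+ia_2\in\C$, write $u(x,t)=e^{t\Delta}u_0+N(x,t)$ with $N(x,t):=-\int_0^t e^{(t-s)\Delta}\P\nabla\cdot(u\otimes u)(s)\dd s$, and put $\zeta(t):=\tfrac12\int_0^t\!\!\int(u_1^2-u_2^2)\dd y\dd s+i\int_0^t\!\!\int u_1u_2\dd y\dd s$, which is finite since $\|u(s)\|_2\le\|u_0\|_2$. I would prove: (a) $|x|^3e^{t\Delta}u_0(x)\to0$ as $|x|\to\infty$, for every $t>0$; and (b) $N(x,t)=\frac{2\,\overline{\zeta(t)}}{\pi\,\bar x^3}+o(|x|^{-3})$ as $|x|\to\infty$. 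Granting (a)--(b), the triangle inequality yields $|x|^3|u(x,t)|=|x|^3|N(x,t)|+o(1)\to\frac2\pi|\zeta(t)|$, and $\frac2\pi|\zeta(t)|$ equals the right-hand side of~\eqref{limit:L} and is manifestly independent of $x/|x|$ — so, in particular, the radial limits along any $\sigma\in\mathbb{S}^1$ coincide with this common value. This follows the scheme of the proof of Theorem~\ref{th:mild}; the one change is that the decay assumptions on $\nabla u_0$ and $\Delta u_0$ needed there (to show $e^{t\Delta}u_0-u_0=\int_0^t e^{s\Delta}\Delta u_0\dd s$ is $o(|x|^{-3})$) are here replaced by the stronger decay $u_0(x)=o(|x|^{-3})$, which directly handles $e^{t\Delta}u_0$ and requires nothing on the derivatives of $u_0$.

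\emph{Step 1 (claim a).} For fixed $t>0$ split the heat convolution defining $e^{t\Delta}u_0(x)$ into the regions $\{|y|\ge|x|/2\}$, where $|u_0(y)|=o(|x|^{-3})$ uniformly while the heat kernel integrates to $1$, and $\{|y|<|x|/2\}$, where $|x-y|\ge|x|/2$ makes the heat kernel $O(t^{-1}e^{-|x|^2/16t})$ and $\int_{|y|<|x|/2}|u_0|\le\|u_0\|_2|x|$ by Cauchy--Schwarz. Thus $e^{t\Delta}u_0(x)=o(|x|^{-3})$; only $u_0\in L^2$ and the pointwise decay of $u_0$ are used here.

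\emph{Step 2 (claim b).} I would first establish the \emph{propagation of decay}: $u_0\in L^p$ with $p>2$ and $u_0(x)=o(|x|^{-3})$ force $\sup_{0<s\le t}\bigl\|(1+|\cdot|)^3u(\cdot,s)\bigr\|_\infty<\infty$ for every $t>0$, by a bootstrap of~\eqref{NSI} in a space carrying the weight $(1+|x|)^3$ away from the origin (with $L^p$-control near it): the datum term lies in it by Step 1, the bilinear term maps it into itself because the kernel of $e^{\tau\Delta}\P\nabla\cdot$ decays like $|x|^{-3}$ and $\int_0^t\!\int|u\otimes u|\,\dd y\dd s\le t\|u_0\|_2^2$, and the time integrability near $s=0$ uses $p>2$. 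In particular $(u\otimes u)(\cdot,s)\in L^1(\R^2)$ with a finite first moment, uniformly for $s\in(0,t]$. Next, since $\P\nabla\cdot$ annihilates multiples of the identity matrix, $\P\nabla\cdot(u\otimes u)$ depends only on the trace-free part $w:=\tfrac12(u_1^2-u_2^2)+iu_1u_2$ of $u\otimes u$, and with $\partial=\tfrac12(\partial_1-i\partial_2)$, $\bar\partial=\tfrac12(\partial_1+i\partial_2)$ a direct computation gives $\P\nabla\cdot(u\otimes u)\leftrightarrow\partial w-\partial^{-1}\bar\partial^2\bar w$. In $N(x,t)$ the $\partial w$ term is $-\int_0^t\partial\,e^{(t-s)\Delta}w(s)\dd s$; the kernel of $\int_0^t\partial\,e^{\tau\Delta}\dd\tau$, namely $z\mapsto-\bar z\,(4\pi|z|^2)^{-1}e^{-|z|^2/(4t)}$, is rapidly decaying, so splitting the $y$-integral as in Step 1 and using the $O(|x|^{-6})$ tail from the propagation bound shows this term is $o(|x|^{-3})$. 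For the other term, $\partial^{-1}g=\tfrac1{\pi\bar z}*g$ while $\bar\partial^2\bigl(\tfrac1{\pi\bar z}\bigr)=\tfrac2{\pi\bar z^3}$ off the origin is homogeneous of degree $-3$; hence, using that $(u\otimes u)(\cdot,s)$ is integrable with finite first moment,
\[
\int_0^t\partial^{-1}\bar\partial^2\,e^{(t-s)\Delta}\bar w(s)\dd s=\frac{2}{\pi\bar x^3}\int_0^t\!\!\int\bar w\,\dd y\dd s+o(|x|^{-3})=\frac{2\,\overline{\zeta(t)}}{\pi\bar x^3}+o(|x|^{-3}).
\]
Adding the two contributions gives (b). The remainder estimates here — convergence of the spatial integral to the ``charge'' $\int\bar w$ times the homogeneous kernel, with $o(|x|^{-3})$ errors — are exactly those carried out for Theorem~\ref{th:mild}, only simpler because here $(u\otimes u)(\cdot,s)=O(|x|^{-6})$ rather than merely integrable.

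\emph{Main obstacle.} The two genuinely technical points are the propagation of the $(1+|x|)^3$-decay in Step 2 — uniformly for $s$ up to $t$, while only $L^p$ (not $L^\infty$) regularity of $u_0$ near the origin is assumed — and, in the second half of Step 2, the behaviour of the time integral as $s\to t^-$, where $e^{(t-s)\Delta}$ degenerates: this is handled by isolating a left neighbourhood of $s=t$ and shifting one spatial derivative from $\P\nabla\cdot(u\otimes u)$ onto the heat semigroup, which leaves the time-integrable factor $(t-s)^{-1/2}$. Finally, note that since $\zeta(t)\neq0$ for generic data, claim (b) shows a posteriori that $u(\cdot,t)$ decays \emph{exactly} like $|x|^{-3}$ at spatial infinity and never faster — the ``fluid particles are nowhere at rest at large distances'' statement announced in the Introduction.
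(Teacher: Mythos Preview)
Your proposal is correct and reaches the same conclusion, but by a genuinely different route than the paper's proof.

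\textbf{On the decomposition of the nonlinearity.} The paper writes the kernel of $e^{\tau\Delta}\P\nabla\cdot$ as a homogeneous piece $\FF(x)=\nabla^3 E_2(x)$ plus a Gaussian remainder (the decomposition~\eqref{proARMA}), and then computes by hand that the leading vector field $\nabla H(x,t)$ has Euclidean norm depending only on $|x|$ (equation~\eqref{eq:speed}). Your complex-variable identity $\P\nabla\cdot(u\otimes u)\leftrightarrow \partial w-\partial^{-1}\bar\partial^2\bar w$ accomplishes the same splitting more transparently: the $\partial w$ piece is convolved with a Gaussian kernel and is $o(|x|^{-3})$, while the $\partial^{-1}\bar\partial^2\bar w$ piece gives the homogeneous profile $2\overline{\zeta(t)}/(\pi\bar x^3)$, whose modulus $2|\zeta(t)|/(\pi|x|^3)$ is \emph{manifestly} radial --- no algebraic verification needed. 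This is an advantage of your approach. (A sign check confirms $|2\overline{\zeta(t)}/(\pi\bar x^3)|=|\nabla H(x,t)|$ exactly.)

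\textbf{On the decay propagation.} Here the paper is more economical. It does \emph{not} propagate the full $(1+|x|)^{3}$ weight on $u$: Proposition~\ref{prop:dec32} only shows $\sup_{0<s\le T}s^{1/p}|u(x,s)|=o(|x|^{-3/2})$, which already gives $u\otimes u=o(|x|^{-3})$ in the sense required by Lemma~\ref{lem:cruc}. Your claimed $(1+|x|)^3$ bound on $u$ is true \emph{a posteriori} (it follows from the asymptotic formula itself), but establishing it \emph{a priori} takes an extra bootstrap step on top of the $|x|^{-3/2}$ estimate. Also, as written, your bound $\sup_{0<s\le t}\|(1+|\cdot|)^3 u(\cdot,s)\|_\infty<\infty$ is slightly too strong: since $u_0$ is only assumed in $L^p$, $p<\infty$, near the origin, a factor $s^{1/p}$ must appear on the left (exactly as in~\eqref{dec-uta}); you flag this yourself under ``main obstacle'', but the statement in Step~2 should be corrected accordingly. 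Once that weight is in place, your remainder estimates go through, and indeed more easily than in the paper since $(u\otimes u)(\cdot,s)=O(|x|^{-6})$ trivially has a finite first moment.

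In short: your complex-variable packaging of $\P\nabla\cdot$ is cleaner for the \emph{algebra} (radiality is immediate), while the paper's route via Proposition~\ref{prop:dec32} and Lemma~\ref{lem:cruc} is cleaner for the \emph{analysis} (less decay to propagate).
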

Generically, in both~\eqref{def:L} and \eqref{def:L3}, the limit $L(t)$ will be nonzero, which means that $|u(\cdot,t)|$ is asymptotically radial at the spatial infinity.

We emphasise that, generically, the limits 
$\lim_{|x|\to\infty} |x|^3\bigl(u(x,t)-u_0(x)\bigr)$
(in the setting of Theorem~\ref{th:mild})
and $\lim_{|x|\to\infty} |x|^3 u(x,t)$ 
(in the setting of Theorem~\ref{th:strong}) \emph{do not exist}. So our results put in evidence a property
of the speed (or of the energy) of fluid particles, rather than of their velocity vectors.

The statement of Theorem~\ref{th:mild} might look less attractive than that of
Theorem~\ref{th:strong}. However, the former is a deeper result.
Indeed, the decay condition, of the latter, namely $u_0(x)=o(|x|^{-3})$, is too stringent to be physically realistic, because such a strong decay condition is known to immediately break down during the evolution. (See~\cite{DobS94}).
 On the other hand, the milder decay conditions
on $u_0$ required in Theorem~\ref{th:mild} are preserved by the Navier--Stokes flow.
Hence, the decay conditions of Theorem~\ref{th:mild} will be satisfied in many physically relevant cases.

Formula~\eqref{limit:L}
suggests the introduction of the complex valued map
\begin{equation}
\label{complex}
z(t)=
\Bigl(\int_0^t\!\!\int (u_1^2-u_2^2)\dd y\dd s\Bigr)
+i\Bigl(\int_0^t\!\!\int 2u_1u_2\dd y\dd s\Bigr),
\end{equation}
so that
\[
L(t)=\textstyle\frac{1}{\pi}|z(t)|.
\]
The use of integrals of the form  $\int u_ju_k$ is inspired by earlier
papers by Schonbek~\cite{Sch91}, Dobrokhothov and Shafarevich \cite{DobS94} and Miyakawa and Schonbek~\cite{MiyS01}.
Choosing an appropriate coordinate system (suitably rotating the axis by a time dependent angle), one can always set
$\int_0^t \!\!\int(u_1^2-u_2^2)=0$, or otherwise $\int_0^t\!\!\int 2u_1u_2=0$,
thus simplifying the expression of $L(t)$ in~\eqref{limit:L}. Formula~\eqref{limit:L}
has however the advantage of being independent on the choice of the coordinate system. 

Next theorem shows that, contrary to $|u(\cdot,t)|$ or $|u(\cdot,t)-u_0|$,
the individual \emph{components} of the velocity field have a genuinely anisotropic behavior.
We will use the notation $A(x)\approx B(x)$ to indicate that the ratio $(A/B)(x)$
converges to a non-zero real constant. 
For an arbitrarily fixed unit vector $\mathbf{e}$ of $\R^2$, let us denote
$v=u\cdot{\bf e}$ the component of $u$ along ${\bf e}$. In the same way,
we denote $v_0=u_0\cdot{\bf e}$.

\begin{theorem}
\label{th:hexagonal}
  Let $u_0\in L^2_\sigma(\R^2)$ satisfying the conditions as in Theorem~\ref{th:mild}, or otherwise as in
  Theorem~\ref{th:strong}.
  Let $v$ be any component of Leray's solution~$u$.
  For any $t>0$ such that $L(t)\not=0$,
there exists $\zeta_t\in \C$, with $|\zeta_t|=1$, such that
the regular hexagon 
\[
\mathcal{H}=\mathcal{H}(t)
=\bigl\{\zeta_1,\zeta_2,\ldots,\zeta_6\bigl\},
\]
made of the complex roots of the equation $z^6=\zeta_t$,
has the following property:
as $R\to+\infty$,
\begin{itemize}
\item[i)]  in the setting of Theorem~\ref{th:mild},
   \[
   \begin{aligned}
   &v(x,t)-v_0(x)\approx |x|^{-3} 
    \qquad&\text{for $x=R\zeta$, $|\zeta|=1$, $\zeta\not\in \mathcal{H}$,}\\
   &v(x,t)-v_0(x)=o(|x|^{-3})
    \qquad&\text{for $x=R\zeta$, $|\zeta|=1$, $\zeta\in \mathcal{H}$}
   \end{aligned}
    \]
\item[ii)] or, simply, in the setting of Theorem~\ref{th:strong},
 \[
   \begin{aligned}
   &v(x,t)\approx |x|^{-3} 
    \qquad&\text{for $x=R\zeta$, $|\zeta|=1$, $\zeta\not\in \mathcal{H}$,}\\
   &v(x,t)=o(|x|^{-3})
    \qquad&\text{for $x=R\zeta$, $|\zeta|=1$, $\zeta\in \mathcal{H}$.}
   \end{aligned}
    \] 

\end{itemize}
\end{theorem}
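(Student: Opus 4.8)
\medskip
\noindent\emph{Strategy of proof.}
The plan is to refine Theorems~\ref{th:mild}--\ref{th:strong}, which pin down only the \emph{modulus} $|u(\cdot,t)-u_0|$ (resp.\ $|u(\cdot,t)|$) at spatial infinity, into a full asymptotic expansion of the \emph{vector field} itself; the hexagon will then be read off from the angular part of that expansion.

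First I would revisit the far-field analysis underlying Theorems~\ref{th:mild} and~\ref{th:strong}, this time keeping track of directions rather than only of norms. In the Duhamel formula~\eqref{NSI} the linear part is negligible at this scale: $e^{t\Delta}u_0-u_0=\int_0^t e^{s\Delta}\Delta u_0\,\dd s=o(|x|^{-3})$ under~\eqref{dec-u0} (and directly $e^{t\Delta}u_0=o(|x|^{-3})$ in the setting of Theorem~\ref{th:strong}). The bilinear term is controlled by the kernel of $\P\nabla\cdot$. Splitting $\P\nabla\cdot$ into a purely local part — which, by $\nabla\cdot u=0$, acts as $(u\cdot\nabla)u$ and is $o(|x|^{-3})$ thanks to the decay of $u$ and of $\nabla u$ — plus the operator with homogeneous, degree $-3$ kernel
\[
E_{ijl}(x)=\partial_i\partial_j\partial_l\!\Bigl(-\tfrac{1}{2\pi}\log|x|\Bigr)
=\frac{1}{\pi|x|^{3}}\bigl(\delta_{jl}\omega_i+\delta_{ij}\omega_l+\delta_{il}\omega_j-4\omega_i\omega_j\omega_l\bigr),\qquad \omega=\tfrac{x}{|x|},
\]
and using that the heat factor $e^{(t-s)\Delta}$ perturbs this kernel only by faster-decaying terms, I would establish
\[
u_i(x,t)-(u_0)_i(x)=\frac{1}{\pi|x|^{3}}\sum_{j,l}\bigl(4\omega_i\omega_j\omega_l-\delta_{jl}\omega_i-\delta_{ij}\omega_l-\delta_{il}\omega_j\bigr)m_{jl}(t)+o(|x|^{-3}),\qquad m_{jl}(t)=\int_0^t\!\!\int u_ju_l\,\dd y\,\dd s,
\]
uniformly in $\omega$, with $u_0$ omitted in the setting of Theorem~\ref{th:strong}. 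Taking the Euclidean norm of the leading term returns $L(t)=\tfrac1\pi|z(t)|$, so this is a sharpening of the two earlier theorems, not a different statement.

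The second, essentially algebraic, step evaluates this profile on a component. Fix $\mathbf e=(\cos\alpha,\sin\alpha)$ and write $x=|x|(\cos\theta,\sin\theta)$. Since $\P\nabla\cdot$ annihilates $\lambda(x)\,\mathrm{Id}$ (the Leray projection of a gradient vanishes), only the trace-free part $M(t)$ of $m(t)$ survives the contraction; writing $M=\left(\begin{smallmatrix}a&b\\ b&-a\end{smallmatrix}\right)$ with $a=\tfrac12\int_0^t\!\!\int(u_1^2-u_2^2)$ and $b=\int_0^t\!\!\int u_1u_2$, so that $a+ib=\tfrac12 z(t)$, elementary trigonometric identities collapse the cubic angular polynomial $4\omega_i\omega_j\omega_l-(\ldots)$, once paired with the trace-free $M$, to a \emph{pure} third harmonic — the first-harmonic contributions cancel — and one obtains
\[
v(x,t)-v_0(x)=\frac{1}{\pi|x|^{3}}\,\mathrm{Re}\!\bigl(e^{i\alpha}\,z(t)\,e^{-3i\theta}\bigr)+o(|x|^{-3}).
\]
I expect this cancellation of the first harmonic to be the only delicate point of this step; it is also the very mechanism responsible for the isotropy of $|u(\cdot,t)-u_0|$, since $|\mathrm{Re}(\zeta e^{-3i\theta})|^2+|\mathrm{Im}(\zeta e^{-3i\theta})|^2=|\zeta|^2$ does not depend on~$\theta$.

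Finally the hexagon: since $L(t)\neq0$ we have $z(t)\neq0$, so the leading profile $\mathrm{Re}(e^{i\alpha}z(t)e^{-3i\theta})$ is a nonzero sinusoid in $3\theta$, vanishing precisely at the six angles $\theta$ with $3\theta\equiv\alpha+\arg z(t)-\tfrac\pi2\pmod\pi$, equally spaced by $\pi/3$ on $[0,2\pi)$. Setting $\sigma_k=e^{i\theta_k}$ for these six angles, one checks that all $\sigma_k^6$ equal $\sigma_t:=-e^{2i\alpha}\bigl(z(t)/|z(t)|\bigr)^2$, with $|\sigma_t|=1$, so $\mathcal H(t)=\{\sigma_1,\dots,\sigma_6\}=\{\sigma\in\C:\sigma^6=\sigma_t\}$. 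For $\sigma=e^{i\theta}\notin\mathcal H(t)$ the coefficient of $|x|^{-3}$ in the expansion is a nonzero constant, so $v(x,t)-v_0(x)\approx|x|^{-3}$; for $\sigma\in\mathcal H(t)$ that coefficient vanishes and the expansion leaves $v(x,t)-v_0(x)=o(|x|^{-3})$. The setting of Theorem~\ref{th:strong} is identical with $v_0\equiv0$. The genuine obstacle is the first step — the uniform vectorial expansion — which rests on the weighted (essentially first spatial moment) bounds propagated from the decay hypotheses on $u_0$, exactly as in the proofs of Theorems~\ref{th:mild} and~\ref{th:strong}; once that is in place, the remaining steps are short and explicit.
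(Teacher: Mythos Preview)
Your proposal is correct and follows essentially the paper's route: establish the vectorial asymptotic $u-u_0=\nabla H+o(|x|^{-3})$ (this is formula~\eqref{as:pro1}, already obtained in the course of proving Theorems~\ref{th:mild}--\ref{th:strong}), compute the angular part of each component of $\nabla H$ to find it is a pure third harmonic in~$\theta$, and read off the six equally spaced zeros. The paper differs only cosmetically, reducing to $v=u_2$ by rotational invariance and using real trigonometric notation in place of your complex form $\mathrm{Re}\bigl(e^{i\alpha}z(t)e^{-3i\theta}\bigr)$; your trace-free observation is exactly what the paper's direct computation of $\nabla H$ produces.
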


Figure~\ref{fig1} (middle) offers a possible visualization of this theorem. 
A different way to visualize the conclusion of Theorem~\ref{th:hexagonal} is to
perform an inverse stereographic projection and to draw the images of the level lines of $|v(t)-v_0|$ on the stereographic sphere: our theorem predicts that close the north stereographic pole the level lines tend to have a snowflake shape with a hexagonal symmetry.
Yet, Theorem~\ref{th:hexagonal} cannot be used to explain the physical phenomenon of Saturn's hexagon mentioned in the abstract. We refer to~\cite{PNAS-2020} for a recent analysis of the latter. 
Close to Saturn's north pole the hexagonal pattern is due to an hexagonal symmetry of the trajectories of fluid particles. The hexagonal structure in our paper appears as a symmetry of the absolute values of the components of $u(x, t)$ (for large $x$), but it is not a symmetry of the fluid velocity itself.

\begin{example}
If, in  a given coordinate system, $v=u_2$ is the vertical component of Leray's solution,
then the  proof will show that one can take $\zeta_t=(|z(t)|^2/z(t)^2)$, which is well defined because of the assumption $L(t)\not=0$. Here, $z(t)$ is given by~\eqref{complex}.
For the horizontal component, $v=u_1$, one has $\zeta_t=(-|z(t)|^2/z(t)^2)$.
So the two hexagonal structures associated with the horizontal and vertical components of $u$ are always obtained from each-other performing a rotation of $\pm\pi/6$.
\end{example}

The proof of the above theorems relies on a refinement of an asymptotic formula for Navier--Stokes flows in $\R^d$, $d\ge2$, 
\begin{equation}
\label{as-pro}
u(x,t)=e^{t\Delta}u_0(x)+\underbrace{\nabla H(x,t)}_{\approx|x|^{-d-1}}+o(|x|^{-d-1}),
\qquad\text{as $|x|\to+\infty$},
\end{equation}
first established in~\cite{BraV07} under appropriate decay assumptions
on $u_0$. (See also Lemarié-Rieusset's book~\cite[Theorem 4.12]{Lem16} and~\cite{KukR11}):
See~\eqref{def:Ha} below for the definition of the scalar function~$H$.
The crucial observation of the present paper is that, focusing on the two-dimensional case, we can put in evidence two very important properties specific of planar fluids that, surprisingly, remained unnoticed in earlier studies. The first one is that, when $d=2$, the map $x\mapsto|\nabla H(x,t)|$ is a radial function. The second one is that each components of $\nabla H(\cdot,t)$ possess exactly six zeros on the unit circle, that are the vertex of a (time-dependent) regular hexagon.
Let us emphasize that the explicit expression of $\nabla H(\cdot,t)$ reduces the proof
of both properties to a short and elementary computation. This computation is contained in Section~\ref{sec:proofth}.

Concerning the technical contributions, in this paper we perform a new asymptotic analysis of the nonlinear term in (NSI), that will allow us to considerably relax the required conditions on $u_0$ to insure the validity~\eqref{as-pro}. Indeed, in earlier papers the conditions on~$u_0$ were too stringent to encompass the case of only mild decaying data as in~\eqref{dec-u0}.
This will require a deeper use of the cancellations hidden inside the Oseen kernel,
whereas simpler size estimates were enough in the more restrictive setting 
considered in~\cites{BraV07,Lem16,KukR11}.
Another simple but useful ingredient will be an asymptotic formula for the solutions of the heat equation, that
allows us to replace $e^{t\Delta}u_0(x)$ with $u_0(x)$ in~\eqref{as-pro}.

\begin{remark}
\label{rem:compa}
As mentioned in the introduction, 
in a companion paper, \cite{Bra-Vor}, we discuss the case of possibly \emph{infinite energy flows} with localized vorticity $\omega=\partial_1u_2-\partial_2u_1$.
Therein, we show in this case that the dominant geometric feature in the far field of the components of $u$ is no longer hexagonal: a density plot like that in Figure~\ref{fig1} (middle) would reveal a \emph{digonal symmetry} when the total circulation of the flow $\int\omega_0$ is non-zero and a \emph{quadrilobe shape} for flows with zero total circulation.
The approach of~\cite{Bra-Vor} is mainly based on the Biot--Savart law, and it is better suited for the geometric description of the higher-order terms. Moreover, it allows to recover in a simple way a few of the essential features of the recent analysis of McOwen, Sultan and Topalov \cites{McOT19, McOT21,SulT20} on the Euler equations in asymptotic spaces.

On the other hand, the advantage of the approach of 
the present paper is that it allows us to encompass  (according to \eqref{dec-u0}) the case of flows with non-integrable vorticity, for which the total circulation is not even defined. This will be especially relevant in Section~\ref{large_time}, when we will discuss the large time behavior of Leray's solutions.
\end{remark}

\section{The homogeneous part of the kernel of $e^{t\Delta}\P{\rm div}.$}

\subsection{Decompositions of the kernel in $\R^d$, $d\ge2$.}

It is convenient to denote by $F(x,t)=(F_{j,h,k})(x,t)$
the kernel of the operator $e^{t\Delta}\P\text{div\,}$. Equivalently, $F$ can be defined
through its symbol $(j,h,k=1,2)$, with $\xi\in \R^d$ and $t>0$,
\begin{equation}
\label{eq:symb-F}
\widehat{F}_{j,h,k}(\xi,t)=e^{-t|\xi|^2}
\Bigl(i\xi_h\delta_{j,k}+\frac{i\xi_j i\xi_h i\xi_k}{|\xi|^2}\Bigr),
\end{equation}
where $\delta_{j,k}$ is the Kronecker symbol.
Therefore, the integral formulation of the Navier--Stokes equations in $\R^d$, $d\ge2$ reads
\begin{equation}
\begin{split}
u_j(t)&=e^{t\Delta} u_{0,j}-\int_0^t\!\!\int \sum_{h,k=1}^d F_{j,h,k}(x-y,t-s)(u_hu_k)(s)\dd y\dd s,
\qquad(j=1,\ldots,d),\\
\end{split}
\end{equation}
together with the incompressibility condition $\nabla\cdot u_0=0$. In more compact form, we will write
\begin{equation}
\label{IE}
u(t)=e^{t\Delta} u_0-\int_0^t F(t-s)*(u\otimes u)(s)\dd s,
\qquad
\nabla\cdot u_0=0.
\end{equation}

Let us first recall some known properties of the kernel $F$.
First of all, we have the scaling relation
\begin{equation}
 \label{scaF}
 F(x,t)=t^{-(d+1)/2}F(x/\sqrt t,1).
\end{equation}
Computing the inverse Fourier transform in~\eqref{eq:symb-F}, using the identity $|\xi|^{-2}=\int_0^\infty e^{-s|\xi|^2}\dd s$, yields the usual decomposition $F=F^{(1)}+F^{(2)}$,
with
\begin{equation}
\label{usdecF}
F^{(1)}_{j;h,k}(x,t)=\partial_h g_t\,\delta_{j,k}
\qquad
F^{(2)}_{j;h,k}(x,t)=\int_t^\infty \partial_j\partial_h\partial_k
g_s(x)\dd s.
\end{equation}
Here, $g_t(x)=(4\pi t )^{-d/2}e^{-|x|^2/(4t)}$ is the heat kernel.
From this decomposition it is easy to deduce the classical pointwise bound
\begin{equation}
 \label{podeF}
 \sup_{x\in\R^d,t>0}{|x|^{d+1}|F(x,t)|}<\infty.
\end{equation}

There is another decomposition of the kernel $F$, pointed out in~\cite{BraARMA}, holds:
it reads
\begin{equation}
\label{proARMA}
F(x,t)= \FF(x)
 +|x|^{-d-1}\Psi\Bigl(x/\sqrt t\Bigr).
\end{equation}
Here $\FF$ is a homogeneous tensor of degree $-d-1$, whose components are given by
\begin{equation}
\label{FF}
\begin{split}
\FF_{j;h,k}(x)&=
\partial_j\partial_h\partial_k E_d(x)\\
%&=\frac{\Gamma\bigl(\frac{d+2}{2}\bigr)}{\pi^{d/2}}\cdot
%   \frac{\sigma_{j,h,k}(x)|x|^2-(d+2)x_jx_hx_k}{|x|^{d+4}},
   \end{split}
\end{equation}
where $E_d$ is the fundamental solution of the Laplacian
in~$\R^d$.
Moreover, 
% $\sigma_{j,h,k}(x)=\delta_{j,h}x_k+\delta_{h,k}x_j+\delta_{k,j}x_h$, for $j,h,k=1,\ldots,d$, and
$\Psi=(\Psi_{j,h,k})$ is smooth outside the origin and such that, for all $\alpha\in\N^d$, and $x\not=0$,
there exist $C,c>0$ such that  
\[
|\partial^\alpha \Psi(x)|\le Ce^{-c|x|^2}.
\]

This second decomposition of $F$ is very useful in the study of the far-field asymptotics of the velocity field. Indeed, in formula~\eqref{as-pro}, the scalar function $H$ is given by
\begin{equation}
\label{def:Ha}
\begin{split}
H(x,t)
&:=\Bigl[\nabla^2 E_d(x)\colon\int_0^t\!\!\int (u\otimes u)(y,s)\dd y \dd s\Bigr]\\
&=\sum_{h,k=1}^d\partial_h\partial_k E_d(x)\int_0^t\!\!\int (u_hu_k)(y,s)\dd y\dd s.
\end{split}
\end{equation}
Therefore, the vector field $\nabla H$ is constructed by taking linear combinations of components of the tensor $\FF$.

\section{Persistence results on pointwise decay}

The goal of this section is to state a couple of propositions about the persistence of pointwise decay
for $u_0$ and its derivatives. The former is needed to establish Theorem~\ref{th:mild} and the second
to establish Theorem~\ref{th:strong}. Results in this vein go back to Takahashi~\cite{Tak99} and were refined by several authors, see~\cites{Miy02, Vig05}.
However, the precise assertions of the propositions below do not seem to be covered by earlier results.

%For simplicity, in this paper we will write $\sup$  instead of $\text{ess\,$\sup$}$ when computing the essential supremum of a measurable function.

\begin{proposition}
\label{prop:dec12}
 Let $u_0\in L^2_\sigma(\R^2)\cap L^p(\R^2)$ for some $4<p\le \infty$, be such that
\begin{equation}
\label{dec-u1}
\begin{cases}
 u_0(x)=o(|x|^{-1}(\log|x|)^{-1/2})\\
 \nabla u_0(x) = o(|x|^{-2}(\log|x|)^{-1/2})
 \end{cases}
 \qquad\text{as $|x|\to\infty$.}
 \end{equation}
Then the unique Leray's solution~$u$ starting from~$u_0$ satisfies, for all $0<T<\infty$,
\begin{equation}
\label{dec-u2}
\begin{cases}
 \sup_{t\in(0,T)} t^{1/p} |u(x,t)|=o(|x|^{-1}(\log|x|)^{-1/2})\\
\sup_{t\in(0,T)} t^{1/2+1/p}|\nabla u(x,t)|  = o(|x|^{-2}(\log|x|)^{-1/2})
 \end{cases}
  \qquad\text{as $|x|\to\infty$.}
 \end{equation}
\end{proposition}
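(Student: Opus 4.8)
The plan is to run a fixed-point/bootstrap argument on the integral equation \eqref{IE} in $\R^2$, working in a function space that encodes the weighted pointwise decay of $u$ and $\nabla u$ together with the blow-up in $t$ near $t=0$ coming from the $L^p$ initial datum. First I would set up the two auxiliary weight functions $w_1(x)=|x|^{-1}(\log|x|)^{-1/2}$ and $w_2(x)=|x|^{-2}(\log|x|)^{-1/2}$ (truncated near the origin so they are bounded), and define, for a fixed $T\in(0,\infty)$, the norm
\[
\|u\|_{X_T}=\sup_{0<t<T}\Bigl(t^{1/p}\,\bigl\|u(\cdot,t)/w_1\bigr\|_\infty+t^{1/2+1/p}\,\bigl\|\nabla u(\cdot,t)/w_2\bigr\|_\infty\Bigr)+\sup_{0<t<T}\|u(\cdot,t)\|_2.
\]
The smallness of the $o(\cdot)$ in \eqref{dec-u1} should be captured by a companion localized norm: for $\rho>0$ let $X_{T,\rho}$ be the same but with $\sup_{|x|\ge\rho}$ in the weighted pieces, so that the hypothesis says the linear evolution $e^{t\Delta}u_0$ is small in $X_{T,\rho}$ for $\rho$ large. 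The two steps are then: (a) the linear part $e^{t\Delta}u_0$ lies in $X_T$ and its ``tail'' ($\sup_{|x|\ge\rho}$ part) tends to $0$ as $\rho\to\infty$; (b) the bilinear operator $B(u,v)(t)=\int_0^t F(t-s)*(u\otimes v)(s)\,ds$ maps $X_T\times X_T\to X_T$ with a norm that is $O(T^\epsilon)$ for some $\epsilon>0$, hence is a contraction on a small ball once $T$ is small, and the $o(\cdot)$ decay propagates.

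For step (a), the key is the elementary heat-kernel fact that pointwise decay of $u_0$ against $w_1$ is essentially preserved by $e^{t\Delta}$ for bounded $t$, while the $L^p$-to-$L^\infty$ smoothing gives the $t^{1/p}$ factor near $t=0$; the logarithmic weight is harmless because $\int g_t(x-y)w_1(y)\,dy\lesssim w_1(x)$ for $|x|$ large uniformly in $t\le T$ (this is where I'd be a little careful: the convolution of the heat kernel with $|y|^{-1}(\log|y|)^{-1/2}$ must be checked not to lose the log power, which it does not, by splitting the integral into $|y-x|\le|x|/2$ and its complement). The gradient estimate is the same with one extra $t^{-1/2}$. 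For step (b), I would use the decomposition $F=F^{(1)}+F^{(2)}$ of \eqref{usdecF}, so that $F^{(1)}*(u\otimes v)=\nabla g_{t-s}*(u\otimes v)$ and $F^{(2)}$ contributes the far-field homogeneous kernel $\FF$ plus an exponentially-localized piece via \eqref{proARMA}. The quadratic term $u\otimes v$ decays like $w_1^2\sim|x|^{-2}(\log|x|)^{-1}$ with a $t$-weight $t^{-2/p}$, and one must show that convolving against $|x-y|^{-3}$ (the size of $\FF$ and of $F$ by \eqref{podeF}) returns something of order $w_2\sim|x|^{-2}(\log|x|)^{-1/2}$ — note we \emph{gain} a half power of the log, which is exactly the mechanism that lets the bilinear estimate close with room to spare, and is what makes the $o(\cdot)$ (rather than merely $O(\cdot)$) conclusion survive.

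The main obstacle — and the place where the paper advertises its technical novelty — is precisely the bilinear estimate at the threshold decay rate: with only $u\otimes v\sim|x|^{-2}(\log|x|)^{-1}$, a crude size bound $\int|x-y|^{-3}|y|^{-2}(\log|y|)^{-1}\,dy$ is \emph{logarithmically divergent} at spatial infinity, so one cannot simply estimate $|F|\le C|x-y|^{-3}$. The fix is to exploit the cancellation in the Oseen kernel $F$: either use that $\widehat F_{j,h,k}(0,t)=0$ so $\int F(y,t)\,dy=0$ and subtract the value of $u\otimes v$ at the base point (a Taylor-type argument using the $\nabla u\in w_2$ bound), or split the $y$-integral into the near region $|y|\le|x|/2$ (where $u\otimes v$ is small and one can afford the full kernel), the far region $|y|\ge 2|x|$ (where $|x-y|\approx|y|$ and the kernel decay alone suffices), and the intermediate annulus $|y-x|\lesssim|x|$ (where the singularity of $|x-y|^{-3}$ is integrable in $\R^2$ and $u\otimes v\approx w_1(x)^2$ is roughly constant). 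Carrying out the intermediate-annulus and the moment-cancellation parts so as to recover $w_2$ with a gained half-power of the logarithm, uniformly for $t\le T$, with the correct $t^{1/2+1/p}$ prefactor on the gradient, is the technical heart; everything else is the standard Picard iteration plus the observation that the contraction on $[0,T_0]$ for small $T_0$ can be continued to any $T<\infty$ since the $X_T$-norm (in particular $\|u(t)\|_2$) stays finite along Leray's solution, so uniqueness forces the iterate to coincide with $u$ on all of $(0,T)$.
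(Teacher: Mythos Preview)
Your two-step architecture (fixed point in a weighted space for small $T_0$, then continuation to arbitrary $T$) is exactly what the paper does. But you misdiagnose where the work lies, and your continuation step is a genuine gap.

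\medskip
\textbf{On the bilinear estimate.} You claim that the crude size bound on $F$ leads to a logarithmic divergence at spatial infinity, forcing you to invoke the moment cancellation $\int F(y,t)\,dy=0$. This is not the case for Proposition~\ref{prop:dec12}. With the weights $\phi(x)=(1+|x|)(\log(e+|x|))^{1/2}$ and $\psi(x)=(1+|x|)^{2}(\log(e+|x|))^{1/2}$, the paper closes the bilinear estimate (Lemma~\ref{lem:bil}) using nothing more than $|F(x,t)|\le C|x|^{-3}$, the scaling bound $\|F(t)\|_1\le Ct^{-1/2}$, and the elementary splitting $|y|\le|x|/2$ versus $|y|\ge|x|/2$. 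In the near region one uses $|F|\lesssim|x|^{-3}$ and computes $\int_{|y|\le|x|/2}\phi(y)^{-2}\,dy\lesssim\log\log|x|$, giving $|x|^{-3}\log\log|x|\ll\phi(x)^{-1}$; in the far region one uses $\|F(t-s)\|_1$ and the pointwise bound $\phi(x)^{-2}$ on $u\otimes v$, which is already $o(\phi(x)^{-1})$. The gradient part is handled by moving $\nabla$ onto $u\otimes v$ and using that $\phi^{-1}\psi^{-1}$ is integrable in $\R^2$. No cancellation is needed. The Oseen-kernel cancellation you describe is the key device in Lemma~\ref{lem:cruc2} (the asymptotic profile of the nonlinear term), which is a finer statement requiring the full $|x|^{-3}$ output; here the target decay $\phi^{-1}$, $\psi^{-1}$ is much weaker and the estimate closes with room to spare.

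\medskip
\textbf{On the continuation beyond $T_0$.} This is where your proposal has a real hole. You write that the contraction ``can be continued to any $T<\infty$ since the $X_T$-norm (in particular $\|u(t)\|_2$) stays finite along Leray's solution, so uniqueness forces the iterate to coincide with~$u$.'' But the finiteness of $\|u(t)\|_2$ says nothing about the weighted $L^\infty$ bounds on $u$ and $\nabla u$ for $t>T_0$, and without those the fixed-point argument does not restart with a uniform step size. The paper instead uses Vigneron's scheme: on $[T_0,T]$ one first exploits the known bound $\sup_{[T_0,T]}\|u(t)\|_\infty<\infty$ for 2D Leray solutions, then works with the \emph{sub-multiplicative} weight $\varphi_a(x)=(1+|x|)^{a}$ for some $a<1$ (the paper takes $a=2/3$; note that $\phi$ itself is not sub-multiplicative, which is why one cannot run Gr\"onwall directly with it). Sub-multiplicativity gives a linear-in-$\|\varphi_a u\|_\infty$ estimate for the Duhamel term, and a discrete Gr\"onwall over a partition of $[T_0,T]$ into steps of length $\sim(\sup\|u\|_\infty)^{-2}$ yields $\sup_{[T_0,T]}\|\varphi_a u(t)\|_\infty+\|\varphi_a\nabla u(t)\|_\infty<\infty$. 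A final bootstrap (one more pass through the integral equation, now using that $(u\otimes u)$ decays like $\varphi_{4/3}^{-1}$) upgrades this to the full $\phi^{-1}$, $\psi^{-1}$ decay. This mechanism is absent from your sketch and is not recoverable from the $L^2$ control you cite.
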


We need the techical assumption that $u_0\in L^p(\R^2)$ with $p>4$ to deduce, from standard heat kernel estimates and a fixed point argument, that $\nabla(u\otimes u )\in L^1_{\rm loc}(\R^+,L^\infty(\R^2))$. This information is useful in proving~Proposition~\ref{prop:dec12}.

\begin{proposition}
\label{prop:dec32}
 Let $u_0\in L^2_\sigma\cap L^p(\R^2)$, for some $p>2$, be such that
\begin{equation}
\label{dec-ua}
 u_0(x)=o(|x|^{-3/2})
 \qquad\text{as $|x|\to\infty$.}
 \end{equation}
Then the unique Leray's solution~$u$ starting from~$u_0$ satisfies, for all  $0<T<\infty$,
\begin{equation}
\label{dec-uta}
 \sup_{t\in(0,T)}t^{1/p}\,|u(x,t)|=o(|x|^{-3/2})
  \qquad\text{as $|x|\to\infty$.}
 \end{equation}
\end{proposition}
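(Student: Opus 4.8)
The plan is to run a bootstrap argument on the integral equation \eqref{NSI}, controlling the weighted quantity $N_T(x) := \sup_{0<t<T} t^{1/p}|u(x,t)|$ in terms of the decay rate $|x|^{-3/2}$. First I would record the two basic ingredients. The linear part is handled by an asymptotic (or merely comparison) estimate for the heat semigroup: since $u_0(x) = o(|x|^{-3/2})$ and $u_0 \in L^p$ with $p>2$, one shows $t^{1/p}|e^{t\Delta}u_0(x)| = o(|x|^{-3/2})$ uniformly for $t \in (0,T)$; the $o(\cdot)$ at spatial infinity is inherited because the heat kernel is an approximate identity and $e^{t\Delta}$ gains the factor $t^{-1/p}$ in $L^\infty$ from $L^p$ to absorb the weight near $t=0$. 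The nonlinear part requires a pointwise bound on the bilinear operator
\[
B(u,u)(x,t) = \int_0^t F(t-s)*(u\otimes u)(s)\,\dd y\,\dd s,
\]
using the kernel bound \eqref{podeF}, namely $|F(x,t)| \lesssim |x|^{-3}$ (here $d=2$), together with the standard smoothing $|e^{t\Delta}\P\,\mathrm{div}\,f|_\infty \lesssim t^{-1/2}|f|_\infty$ or the $L^{p/2}\to L^\infty$ version with the appropriate power of $t$.

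The core is the pointwise convolution estimate: if $|u(y,s)|^2 \le s^{-2/p} n(y)^2$ with $n(y) = o(|y|^{-3/2})$ on a suitable scale, I want to show that $|F(t-s)*(u\otimes u)(s)|$ enjoys the \emph{same} spatial decay $o(|x|^{-3/2})$, after multiplication by $t^{1/p}$ and integration in $s$. This is a split-the-convolution argument: decompose $\int F(x-y,t-s) g(y,s)\,\dd y$ into the region $|y| \le |x|/2$, where $|x-y| \gtrsim |x|$ so $|F(x-y,t-s)| \lesssim |x|^{-3}$ and one is left with $|x|^{-3}\int_{|y|\le|x|/2}|g(y,s)|\,\dd y$ — here the integral is bounded using $g \in L^1_y$ uniformly in $s$ (which follows from $|g| \le s^{-2/p}n^2$, $n \in L^2$ by the decay $n = o(|y|^{-3/2})$ which is in $L^2(\{|y|\ge1\})$, plus $u \in L^\infty_t L^2_x$ near the origin), giving a bound of order $|x|^{-3} = o(|x|^{-3/2})$; and the region $|y| > |x|/2$, where $g(y,s)$ itself is small, $|g(y,s)| \lesssim s^{-2/p}\varepsilon(|x|)|x|^{-3}$ with $\varepsilon(|x|)\to0$, and the kernel is integrated freely using $\int_{\R^2}|F(z,\tau)|\,\dd z \lesssim \tau^{-1/2}$ (from the scaling \eqref{scaF} and local integrability of $F(\cdot,1)$). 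In both regions the time integral $\int_0^t (t-s)^{-1/2}s^{-2/p}\,\dd s \lesssim t^{1/2 - 2/p}$ converges since $p>2$ gives $2/p<1$, and multiplying by $t^{1/p}$ leaves a bounded power of $t$ on $(0,T)$.

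I would then close the argument by a continuity/fixed-point-in-weighted-space step. Define, for fixed small $R_0$, the norm $\|u\| = \sup_{|x|\ge R_0}\sup_{0<t<T}|x|^{3/2}t^{1/p}|u(x,t)|$ together with a complementary control of $u$ on $|x|\le R_0$ coming from the energy bound $u \in L^\infty_t L^2_x \cap L^2_t \dot H^1_x$ and the known $L^\infty$ smoothing; the bilinear estimate above shows $B$ maps the corresponding ball into itself and is contractive for $T$ small, yielding the conclusion on a short interval, and then the persistence is propagated to any $(0,T)$ by the usual reiteration, since the $o(\cdot)$ property at spatial infinity is preserved at each step and the energy inequality prevents blow-up. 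The main obstacle I anticipate is \emph{not} the size estimates but making the $o(|x|^{-3/2})$ (rather than $O(|x|^{-3/2})$) genuinely propagate through the bilinear term: one must be careful that the ``$\varepsilon(|x|)\to 0$'' gained in the far region is uniform in $s$ and survives the time integration, and that the near region genuinely produces the better rate $|x|^{-3}$ — this forces the restriction $p>2$ (so that $u\otimes u$ is integrable in $y$ uniformly in $s$ with an integrable-in-$s$ weight) and is precisely why the hypothesis is $o(|x|^{-3/2})$ and not the stronger $o(|x|^{-3})$ of Theorem~\ref{th:strong}.
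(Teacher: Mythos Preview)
Your proposal is essentially correct and follows the same route as the paper: split the convolution defining $B(u,u)$ at $|y|=|x|/2$, use the pointwise kernel bound $|F|\lesssim|x|^{-3}$ in the near region and $\|F(\cdot,\tau)\|_{1}\lesssim\tau^{-1/2}$ in the far region, then run a fixed-point in a weighted space for short time and continue. The paper packages this in the Banach space $Z_{p,T}$ with norm $\esssup_t t^{1/4}\|u(t)\|_4+\esssup_t t^{1/p}\|(1+|x|)^{3/2}u\|_\infty$ (and its closed ``little-$o$'' subspace $W_{p,T}$), rather than your split ``weighted norm for $|x|\ge R_0$ plus energy control for $|x|\le R_0$''; the auxiliary $L^4$-component makes the bilinear estimate close cleanly and, more importantly, makes identification with Leray's solution immediate via the standard Kato framework. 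Your control of the inner region through the energy bound is morally the same, but as written it is not quite a Banach-space norm, so the contraction step would need to be reformulated. For the propagation from $(0,T_0]$ to $(0,T]$ the paper does not simply ``reiterate'': it invokes a Vigneron-type argument (as in the proof of the companion Proposition~\ref{prop:dec12}) based on the submultiplicativity of $(1+|x|)^{a}$ and the a~priori bound $\sup_{[T_0,T]}\|u(t)\|_\infty<\infty$, which you should make explicit rather than hand-wave.
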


These results would remain true with more general decay profiles. See the comments after the proof.
The main role of the additional $L^p$-assumptions is to prevent a too singular behavior of the solution near $t=0$.
The proof of these propositions is postponed in Section~\ref{sec:pro}.

\section{Asymptotics of the nonlinear term}

The main issue of this section is the spatial asymptotics of the linear integral term
\begin{equation}
\label{lin-int}
 \L(w)(x,t)=\int_0^t\!\!\int F(x-y,t-s)w(y,s)\dd y\dd s.
\end{equation}
As the analysis of~\eqref{lin-int} is independent on the space dimension, in this section we work in~$\R^d$, $d\ge2$.
We have in view the application of the results of this section to the quadratic term
\begin{equation*}
 w(x,t)=(u\otimes u)(x,t).
\end{equation*}
To this purpose, let us establish two lemmas.
%The conclusions of these lemmas are the same, but there is an important difference in the
%assumptions. 

\begin{lemma}
\label{lem:cruc2}
Let $0<T<\infty$, $w\in (L^1(\R^d\times(0,T)))^{d\times d}$ and $0\le a<1$.
Assume that
\[
\esssup_{t\in(0,T)}t^{a}|\nabla w(x,t)| = o(|x|^{-d-1}\log(|x|)^{-1}),
\]
as $|x|\to\infty$.
Then, for $|x|\not=0$ and $t\in(0,T)$,
\begin{equation}
\label{concl:lem}
 \L(w)(x,t)=\FF(x)\colon\int_0^t\!\!\int w(y,s)\dd y\dd s+|x|^{-d-1}\epsilon(x,t),
\end{equation}
where
\[
\,|\epsilon(x,t)|\to0\text{\; as $|x|\to\infty$}.
\]
%and $\sup_{x\not=0}|\epsilon(x,t)|\to0$ as $t\to0$.
\end{lemma}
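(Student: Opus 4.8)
The plan is to estimate the difference $\L(w)(x,t)-\FF(x)\colon\int_0^t\!\!\int_{\R^d}w$, which equals $\int_0^t\!\!\int_{\R^d}\bigl[F(x-y,t-s)-\FF(x)\bigr]w(y,s)\dd y\dd s$, and to show it is $o(|x|^{-d-1})$ as $|x|\to\infty$. I would split the $y$-integral at $|x-y|=|x|/2$: on the \emph{inner} ball $B=\{y:|x-y|\le|x|/2\}$ the points $y$ are comparable to $x$ but the Oseen kernel is singular at $y=x$; on the complement $B^c=\{|x-y|>|x|/2\}$ only crude size bounds will be needed. I will freely use the pointwise bound $|F(z,\tau)|\le C\min\!\bigl(|z|^{-d-1},\tau^{-(d+1)/2}\bigr)$ (from \eqref{podeF} and the scaling \eqref{scaF}); the fact that $F(\cdot,\tau)$ is an \emph{odd} function of $z$, clear from \eqref{usdecF}; the decomposition \eqref{proARMA}, with $F(z,\tau)-\FF(z)=|z|^{-d-1}\Psi(z/\sqrt\tau)$ rapidly decaying; and that $\FF$ is homogeneous of degree $-d-1$ and smooth away from $0$, so $|\FF(z)|\le C|z|^{-d-1}$ and $|\nabla\FF(z)|\le C|z|^{-d-2}$. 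I abbreviate $\delta(R):=\int_0^t\!\!\int_{|y|\ge R}|w|\dd y\dd s\to0$ as $R\to\infty$ (since $w\in L^1$), and $\Phi(R):=\sup_{|z|\ge R}|z|^{d+1}(\log|z|)\,\esssup_{\tau\in(0,T)}\tau^{a}|\nabla w(z,\tau)|\to0$ as $R\to\infty$ (the hypothesis on $\nabla w$).

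On $B^c$ I would write $F(x-y,t-s)-\FF(x)=\bigl[F(x-y,t-s)-\FF(x-y)\bigr]+\bigl[\FF(x-y)-\FF(x)\bigr]$. The first bracket is $\le C|x-y|^{-d-1}e^{-c|x-y|^2/T}\le C(|x|/2)^{-d-1}e^{-c|x|^2/(4T)}$ for $|x|$ large (since $|x-y|>|x|/2$); integrated against $|w|$ it is $o(|x|^{-N})$ for every $N$. For the second bracket I split $B^c$ according to $|y|\le|x|/2$ or $|y|>|x|/2$. Where $|y|\le|x|/2$, the segment from $x-y$ to $x$ stays at distance $\ge|x|/2$ from the origin, so $|\FF(x-y)-\FF(x)|\le C|y|\,|x|^{-d-2}$; splitting the $y$-integral once more at $|y|=\sqrt{|x|}$ bounds the contribution by $C|x|^{-d-2}\bigl(\sqrt{|x|}\,\|w\|_{L^1}+|x|\,\delta(\sqrt{|x|})\bigr)=o(|x|^{-d-1})$. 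Where $|y|>|x|/2$ one also has $|x-y|>|x|/2$, hence $|\FF(x-y)-\FF(x)|\le C|x|^{-d-1}$, and this is multiplied by $\delta(|x|/2)=o(1)$.

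The heart of the argument is the inner ball, and here one must keep the \emph{full} kernel $F$ (the homogeneous part $\FF$ alone is not integrable near $y=x$). The key cancellation is $\int_{B}F(x-y,\tau)\dd y=\int_{|z|\le|x|/2}F(z,\tau)\dd z=0$ for every $\tau>0$, by oddness of $F(\cdot,\tau)$ and symmetry of the ball; hence one may replace $w(y,s)$ by $w(y,s)-w(x,s)$ in $\int_0^t\!\!\int_B F(x-y,t-s)w\dd y\dd s$ --- the value $w(x,s)$ being meaningful for a.e.\ $s>0$ because the hypothesis forces $\nabla w(\cdot,s)$ to be locally bounded near $x$, so $w(\cdot,s)$ is locally Lipschitz there. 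On the segment from $x$ to $y\in B$ one has $|z|\ge|x|/2$, whence $|\nabla w(z,s)|\le C s^{-a}\Phi(|x|/2)\,|x|^{-d-1}(\log|x|)^{-1}$ and therefore $|w(y,s)-w(x,s)|\le C|x-y|\,s^{-a}\Phi(|x|/2)\,|x|^{-d-1}(\log|x|)^{-1}$. Thus the inner contribution is at most
\[
C\,\Phi(|x|/2)\,|x|^{-d-1}(\log|x|)^{-1}\int_0^t s^{-a}\Bigl(\int_{|z|\le|x|/2}|z|\,|F(z,t-s)|\dd z\Bigr)\dd s .
\]
Splitting the $z$-integral at $|z|=\sqrt{t-s}$ and using the two pointwise bounds on $F$ gives $\int_{|z|\le|x|/2}|z|\,|F(z,\tau)|\dd z\le C\bigl(1+\log_+(|x|/\sqrt\tau)\bigr)$, and then $\int_0^t s^{-a}\bigl(1+\log_+(|x|/\sqrt{t-s})\bigr)\dd s\le C_t\bigl(1+\log|x|\bigr)$, where $a<1$ is used to make $\int_0^t s^{-a}\dd s$ finite. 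Hence the inner contribution is $\le C_t\,\Phi(|x|/2)\,|x|^{-d-1}(\log|x|)^{-1}(1+\log|x|)=o(|x|^{-d-1})$. The remaining inner piece of the main term, $\FF(x)\colon\int_0^t\!\!\int_B w$, is $\le C|x|^{-d-1}\delta(|x|/2)=o(|x|^{-d-1})$ because $B\subset\{|y|\ge|x|/2\}$. Collecting all the pieces proves the lemma.

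The main obstacle will be precisely this inner-ball estimate. Crude size bounds are useless there, since $|F(x-y,t-s)|$ behaves like $|x-y|^{-d-1}$ near $y=x$ while $w$ is merely integrable, so the cancellation coming from the oddness of the Oseen kernel is indispensable; and once it is invoked, the bookkeeping of the resulting logarithm is \emph{sharp} --- the weight $|z|\,|F(z,\tau)|\sim|z|^{-d}$ sits exactly at the threshold of non-integrability near the origin, and the extra factor $(\log|x|)^{-1}$ in the decay hypothesis on $\nabla w$ is what absorbs the ensuing $\log|x|$. (One also checks routinely that all the integrals written above are absolutely convergent under the stated hypotheses, so that the manipulations are justified.)
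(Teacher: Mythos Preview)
Your proof is correct and follows essentially the same strategy as the paper's: the same three-region decomposition (your inner ball $B$ is the paper's $\L_2$, and your split of $B^c$ at $|y|=|x|/2$ reproduces $\L_1$ and $\L_3$), the same crucial oddness cancellation $\int_{|z|\le R}F(z,\tau)\dd z=0$ on the inner ball combined with the gradient hypothesis on $w$, and the same logarithmic bookkeeping after splitting $\int|z|\,|F(z,\tau)|\dd z$ at $|z|=\sqrt\tau$. The only cosmetic difference is that on the near-origin piece the paper telescopes via $F(x,t-s)$ (writing $F(x-y,t-s)-F(x,t-s)$ and then $F(x,t-s)=\FF(x)+|x|^{-d-1}\Psi(x/\sqrt{t-s})$), whereas you telescope via $\FF(x-y)$; both routes invoke the same gradient bound $|\nabla F|,|\nabla\FF|\lesssim|\cdot|^{-d-2}$ and the same Gaussian decay of $\Psi$.
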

The colon symbol stands for a summation on the last two subscripts of
$\FF_{j,h,k}(x)$, as in~\eqref{def:Ha}.
Let us now state our second Lemma:

\begin{lemma}
 \label{lem:cruc}
Let $0<T\le\infty$, $w\in L^1(\R^d\times(0,T))$ and $0\le a<1$.
Assume that
\[
\esssup_{t\in(0,T)}t^a|w(x,t)| = o(|x|^{-d-1}), \qquad \text{as $|x|\to\infty$}.
\]
Then conclusion~\eqref{concl:lem} holds.

%\begin{equation}
%\label{concl:lem}
% \L(w)(x,t)=\FF(x)\colon\int_0^t\!\!\int w(y,s)\dd y\dd s+(1+\sqrt t)\,o(|x|^{-d-1}),
%\end{equation}
%where in~\eqref{concl:lem} the lower-order term $o(|x|^{-d-1})$ is uniform with respect to $t\in(0,T)$.
\end{lemma}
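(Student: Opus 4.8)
The plan is to combine the decomposition~\eqref{proARMA}, namely $F(x,t)=\FF(x)+|x|^{-d-1}\Psi(x/\sqrt t)$, with a splitting of the space integral in~\eqref{lin-int} into the two regions $\{|y|\le|x|/2\}$ and $\{|y|>|x|/2\}$. The point of the splitting is that in the first region $w(y,s)$ need not be small, but $x-y$ stays away from the singularity of $\FF$, so one may freeze $\FF(x-y)$ to $\FF(x)$; in the second region $x-y$ may hit that singularity, but there the hypothesis forces $w(y,s)$ to be small. Throughout, $t\in(0,T)$ is fixed and all limits are taken as $|x|\to\infty$ with $t$ frozen, constants allowed to depend on $t$. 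I would record at the outset that the decay hypothesis provides $R_*>0$ and $\delta\colon[R_*,\infty)\to[0,\infty)$ with $\delta(r)\to0$ such that $|w(y,s)|\le\delta(|y|)\,|y|^{-d-1}s^{-a}$ for a.e.\ $(y,s)$ with $|y|\ge R_*$, and I would set $\tilde\delta(r)=\sup_{\rho\ge r}\delta(\rho)$. I would also use that $\int_{\R^d}|F(z,\tau)|\dd z\le C\tau^{-1/2}$, which follows from the scaling relation~\eqref{scaF} together with the global integrability of $F(\cdot,1)$ (it is $\le C|z|^{-d-1}$ for $|z|\ge1$ by~\eqref{podeF} and smooth near the origin by~\eqref{usdecF}).

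In the outer region $\{|y|>|x|/2\}$ I would estimate crudely. There $|w(y,s)|\le\tilde\delta(|x|/2)\,(2/|x|)^{d+1}s^{-a}$ pointwise in $y$, so the corresponding part of $\L(w)$ is bounded by
\[
\tilde\delta(|x|/2)\,(2/|x|)^{d+1}\int_0^t s^{-a}\Bigl(\int_{\R^d}|F(z,t-s)|\dd z\Bigr)\dd s
\le C\,\tilde\delta(|x|/2)\,|x|^{-d-1}\int_0^t s^{-a}(t-s)^{-1/2}\dd s,
\]
and the last integral is finite precisely because $a<1$; hence this part is $o(|x|^{-d-1})$. In the same way $\int_0^t\!\!\int_{|y|>|x|/2}|w(y,s)|\dd y\dd s\le C\,\tilde\delta(|x|/2)\,|x|^{-1}$, so, using $|\FF(x)|\le C|x|^{-d-1}$, the term $\FF(x)\int_0^t\!\!\int_{|y|>|x|/2}w$ is also $o(|x|^{-d-1})$.

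In the inner region $\{|y|\le|x|/2\}$, where $|x-y|\ge|x|/2$, I would insert~\eqref{proARMA}. For the $\Psi$-part, using $|\Psi(z)|\le Ce^{-c|z|^2}$, the monotonicity of $r\mapsto r^{-d-1}e^{-cr^2/\tau}$ on $(0,\infty)$, and the bound $t-s\le t$ to pull the exponential out of the $s$-integral, I get a bound of the form $C\,(2/|x|)^{d+1}e^{-c|x|^2/(4t)}\,\|w\|_{L^1(\R^d\times(0,t))}=o(|x|^{-d-1})$. For the $\FF$-part, I would write $\FF(x-y)=\FF(x)+\bigl(\FF(x-y)-\FF(x)\bigr)$; since $\nabla\FF$ is homogeneous of degree $-d-2$ and continuous off the origin and $|x-\theta y|\ge|x|/2$ for $\theta\in[0,1]$, the mean value theorem gives $|\FF(x-y)-\FF(x)|\le C\,|y|\,|x|^{-d-2}$ on $\{|y|\le|x|/2\}$, so the resulting error is at most $C\,|x|^{-d-2}\int_0^t\!\!\int_{|y|\le|x|/2}|y|\,|w(y,s)|\dd y\dd s$. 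The crucial claim is that this integral is $o(|x|)$: I would split it at a fixed radius $R_0\ge R_*$, bounding $\int_{|y|\le R_0}|y|\,|w|\le R_0\,\|w\|_{L^1(\R^d\times(0,t))}$ and, via the decay estimate, $\int_{R_0<|y|\le|x|/2}|y|\,|w|\le C\,\tilde\delta(R_0)\,t^{1-a}\log|x|$ for $|x|\ge2R_0$; dividing by $|x|$ and letting $|x|\to\infty$ with $R_0$ fixed sends both terms to $0$, since $\log|x|=o(|x|)$. Hence the $\FF$-part equals $\FF(x)\int_0^t\!\!\int_{|y|\le|x|/2}w+o(|x|^{-d-1})$, and adding the three contributions while using the outer estimate to replace $\int_{|y|\le|x|/2}w$ by $\int_{\R^d}w$ gives the claimed asymptotics~\eqref{concl:lem}.

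The hard part is the estimate $\int_{|y|\le|x|/2}|y|\,|w|=o(|x|)$ in the inner region: a brute $L^1$ bound only yields $O(|x|)$, hence an $O(|x|^{-d-1})$ error rather than a vanishing one, so one must genuinely combine the integrability of $w$ near the origin with its pointwise decay far out, the gain coming from $\log|x|=o(|x|)$. It is precisely here — together with the finiteness of $\int_0^t s^{-a}(t-s)^{-1/2}\dd s$ in the outer region — that the assumptions $w\in L^1$, $0\le a<1$, and the logarithm-\emph{free} rate $o(|x|^{-d-1})$ are used; were the decay of $w$ itself only $o(|x|^{-d-1}(\log|x|)^{-1})$ the argument above would just fail to close, which is essentially why the other case is instead handled, in Lemma~\ref{lem:cruc2}, through a hypothesis on $\nabla w$ rather than on $w$ (morally, an integration by parts trades one derivative of the kernel $\FF=\nabla^3 E_d$ for a derivative on $w$ at the price of the slower-decaying kernel $\nabla^2 E_d$, whence the extra logarithm).
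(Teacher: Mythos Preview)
Your proof is correct and follows essentially the same strategy as the paper: split the convolution according to the size of $|y|$, use the decomposition~\eqref{proARMA} in the inner region to freeze the homogeneous part at $x$ via the mean-value inequality, control the Gaussian remainder $\Psi$ by its decay, and exploit the pointwise smallness of $w$ in the outer region together with $\|F(\cdot,\tau)\|_1\le C\tau^{-1/2}$. The only organizational difference is that the paper carries over the three-way split $\L_1+\L_2+\L_3$ from the proof of Lemma~\ref{lem:cruc2} (separating the region $|x-y|\le|x|/2$ from the rest of $\{|y|>|x|/2\}$), whereas you merge $\L_2$ and $\L_3$ into a single outer piece; the paper also freezes the full kernel $F$ rather than just $\FF$, and invokes dominated convergence for the $\int|y|\,|w|=o(|x|)$ step where you argue by hand --- but these are cosmetic variations of the same argument.
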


The first Lemma is the most interesting one, as its conclusion is reached dropping any decay assumption on $w$: it just relies on a condition on $\nabla w$, that is usually less stringent than the corresponding decay condition on~$w$ itself, at least when $w$ is the quadratic nonlinearity of the Navier--Stokes equations.
As we will see, the proof the latter lemma is elementary. On the other hand
the proof of the former makes use of deeper cancellation properties of the kernel $F$.

\begin{proof}[Proof of Lemma~\ref{lem:cruc2}.]
Let us decompose
\[
\L(w)=(\L_1+\L_2+\L_3)(w),
\]
with
\begin{equation}
\begin{split}
\label{dec L} 
\L_1(w)(x,t)&:=\int_0^t\!\!\int_{|y|\le |x|/2}F(x-y,t-s)w(y,s)\dd y \dd s, \\
\L_2(w)(x,t)&:=\int_0^t\!\!\int_{|y|\le |x|/2}F(y,t-s)w(x-y,s)\dd y\dd s, \quad\text{and}\\
\L_3(w)(x,t)&:=\int_0^t\!\!\int_{|x-y|\ge |x|/2,\,\,|y|\ge |x|/2}F(x-y,t-s)w(y,s)\,dy\dd s.\\
\end{split}
\end{equation}
We dropped the colon symbol between $F$ and $w$ to simplify the notations and proceed as all the functions were scalar.
We start estimating $\L_2(w)$.

Then we have
\begin{equation}
\label{dec-gra}
|\nabla w(x,t)|\le Ct^{-a}|x|^{-d-1}(\log(e+|x|))^{-1}\epsilon_2(x),
\end{equation}
for some constant $C>0$ independent on $x$ and $t\in(0,T)$, and a function $\epsilon_2$, independent on time, such that
 $\epsilon_2(x)\to0$ as $|x|\to\infty$.
A crucial observation is that
\begin{equation}
 \int_{|y|\le R} F(y,t)\dd y=0, \qquad\text{for all $t>0$ and $R>0$},
\end{equation}
as one easily checks applying~\eqref{usdecF} and the antisymmetries of $F^{(1)}$ and
$F^{(2)}$.
Therefore, we can rewrite $\L_2(w)$ as
\[
\L_2(w)(x,t)=\int_0^t\!\!\int_{|y|\le |x|/2} F(y,t-s)[w(x-y,s)-w(x,s)]\dd y\dd s.
\]
Applying the gradient estimate~\eqref{dec-gra} we get
\begin{equation*}
 |\L_2(x,t)|\le C|x|^{-d-1}(\log(e+|x|))^{-1}\int_0^t\!\!\int_{|y|\le |x|/2}
  |F(y,t-s)|\,|y|\, s^{-a} \dd y  \dd s \,
 \Bigl(\sup_{|y|\ge|x|/2}\epsilon_2(y)\Bigr).
\end{equation*}
But, from~\eqref{scaF} and~\eqref{podeF}, we see that
\[
|F(y,t-s)|\le C \min\{|y|^{-d-1},(t-s)^{-(d+1)/2}\}.
\]
Hence,
\[
\begin{split}
\Bigl| \int_0^t\!\!\int_{|y|\le |x|/2}  &|F(y,t-s)|\,|y|\,s^{-a}\dd y \dd s\Bigr|\\
&\le C\int_0^t\int_{|y|\le \sqrt{t-s}} (t-s)^{-(d+1)/2}|y|\,s^{-a}\dd y\dd s 
    +C\int_0^t\int_{\sqrt{t-s}\le |y|\le |x|/2} |y|^{-d}s^{-a}\dd y\dd s\\
&\le C\Bigl(t^{1-a}
 +{\bf 1}_{|x|\ge2\sqrt{t}}\int_0^t\log(|x|/(2\sqrt{t-s}))s^{-a}\dd s\Bigr)\\
%&\le Ct^{1-a}\Bigl(   1  +{\bf 1}_{|x|\ge2\sqrt{t}} \log(|x|/(2\sqrt t)\Bigr)\\
&\le CT^{1-a}\log(e+T)\log(e+|x|).
\end{split}
\]
We conclude that, for $\tilde\epsilon_1(x)=\sup_{|y|\ge|x|/2}\epsilon_1(y)$,
\[
\begin{split}
\sup_{t\in(0,T)}|\L_2(x,t)| 
&=\,o(|x|^{-d-1}).
\end{split}
\]

We next estimate $\L_3(w)$.
Using~\eqref{podeF} we get
\[
|\L_3(x,t)|\le |x|^{-d-1}\int_0^t\!\!\int_{|y|\ge |x|/2} |w(y,s)|\dd y\dd s.
\]
As $w\in L^1(\R^d\times(0,T))$, by the dominated convergence theorem
 $\int_0^t\!\int_{|y|\ge |x|/2} |w(y,s)|\dd y\dd s\to0$ as $|x|\to\infty$, uniformly with respect to $t\in(0,T)$. 

%But this integral also goes to zero as $t\to0$, uniformly with respect to $x\in \R^d$.

We end up with the analysis of $\L_1(w)$.
Recalling~\eqref{proARMA}, we split $\L_1(w)$
as
\begin{equation}
\label{de-L1}
\begin{split}
	\L_1(w)(x,t)=& \FF(x)\colon\!\!\int_0^t \!\!\int w(y,s)\,dy\,ds\\
	&\quad\quad - \FF(x)\colon\!\!\int_0^t \!\!\int_{|y|\ge |x|/2} w(y,s)\,dy\,ds\\
	&\quad\quad+\int_0^t\!\!\int_{|y|\le |x|/2}\bigl[F(x-y,t-s)-F(x,t-s)\bigr]\colon w(y,s)\,dy\,ds\\
  &\quad\quad +|x|^{-d-1}\int_0^t  \Psi(x/\sqrt{t-s})\colon\!\!\int_{|y|\le |x|/2} w(y,s)\,dy\,ds.
\end{split}
\end{equation}
For the last term in~\eqref{de-L1}, we can make use, e.g.,
of the rough estimate
$|\Psi(y)|\le C|y|^{-2(1-a)}$,
that implies the bound, for this last term,
\[
 Ct^{1-a}\,|x|^{-d-3+2a}\, \|w\|_{L^1(\R^d\times(0,T))}
\]
which decays faster than $|x|^{-d-1}$ as $|x|\to\infty$.
Hence the last term in~\eqref{de-L1} is settled.

Now, let us consider the third term in the right-hand side of~\eqref{de-L1}.
It is well known, and easy to check with~\eqref{proARMA}, that 
$|\nabla F(x,t)|\le C|x|^{-d-2}$.
Therefore, the third term in~\eqref{de-L1} is bounded by
\[
C|x|^{-d-2}\int_0^t\!\!\int_{|y|\le |x|/2} |y|\,|w(y,s)|\dd y\dd s.
\]
By the dominated convergence theorem,
\[
\int_0^T\!\!\int |x|^{-1}|y|\,|w(y,s)|\,{\bf1}_{|y|\le |x|/2}(y)\dd y\dd s\to0
\quad \text{as $|x|\to\infty$}.
\]
Therefore, the third term in the right-hand side of~\eqref{de-L1} is $o(|x|^{-d-1})$ 
as $|x|\to\infty$, uniformly in $t\in(0,T)$. 
%Moreover, this third term is also bounded by $|x|^{-d-1}\int_0^t\!\int w(y,s)\dd y\dd s$, and the last factor goes to zero as $t\to0$.
This settles also the third term~\eqref{de-L1}

The second term in the right-hand side of~\eqref{de-L1} is the simplest one, and can be treated as 
$\L_3$.
Summarising,
we proved that
\[
\L(w)(x,t)= \FF(x)\colon\int_0^t \!\!\int w(y,s)\,dy\,ds +|x|^{-d-1}\epsilon(x,t),\\
\]
where
\[
|\epsilon(x,t)|\le (1+t^{1-a}\log(e+t))\tilde\epsilon(x)
\]
with $\tilde\epsilon$ independent on $t$ and such that $\tilde \epsilon(x)\to0$
as $|x|\to+\infty$.

\end{proof}

\begin{proof}[Proof of Lemma~\ref{lem:cruc}]
Going back to the decomposition~\eqref{dec L}  of~$\L(w)$, we see that $\L_1(w)$ and $\L_3(w)$ can be treated exactly as before.
The estimate of $\L_2(w)$ is more direct:
indeed, by the assumption on $w$, there exists $C>0$ independent on $x$ and $t$,
and a function $\epsilon_2$ independent on $t$, with $\epsilon_2(x)\to0$, as $|x|\to\infty$, 
such that
\begin{equation}
\label{dec-w}
|w(x,t)|\le C\,t^{-a}|x|^{-d-1}\epsilon_2(x).
\end{equation}
Then we have
\[
\begin{split}
 |\L_2(w)|(x,t) 
 &\le C|x|^{-d-1}\int_0^t\!\!\int_{|y|\le |x|/2} |F(y,t-s)| s^{-a}\epsilon_1(x-y)\dd y\dd s\\
 &\le C|x|^{-d-1}\int_0^t \|F(t-s)\|_1 s^{-a}\dd s \sup_{|y|\ge |x|/2}\epsilon_1(y)\\
 &\le C|x|^{-d-1} t^{1/2-a}\sup_{|y|\ge |x|/2}\epsilon_1(y).
 \end{split}
\]

Hence $\L_2(w)(x,t)=o(|x|^{-d-1})$ as $|x|\to\infty$, for all fixed $t\in(0,T)$.
Notice that in conclusion~\eqref{concl:lem} we have now
\[
|\epsilon(x,t)|\le t^{1/2-a}\tilde \epsilon(x)
\]
with $\tilde\epsilon$ independent on~$t$ and such that $\tilde\epsilon(x)\to0$
as $|x|\to\infty$.
\end{proof}

\section{Linear asymptotics in 2D}

In this section we put in evidence what conditions on $u_0$ ensure that
 \[
 \sup_{t\in(0,T)}|e^{t\Delta}u_0(x)|=o(|x|^{-3}), \qquad\text{ as $|x|\to\infty$}
 \]
We denote by $g_t(x)=(4\pi t)^{-1}e^{-|x|^2/4t}$ the 2D heat kernel, for $t>0$ and $x\in\R^2$.

\begin{lemma}
\label{lem:h}
Let $u_0\in L^2(\R^2)$. Assume also that at least one of the following conditions holds:
\begin{itemize}
\item[(i)] Either $u_0(x)=o(|x|^{-3})$, or
\item[(ii)] $\nabla u_0(x)=o(|x|^{-3})$, or
\item[(iii)] $\Delta u_0(x)=o(|x|^{-3})$ .
 \end{itemize}
 Then there exists a polynomial
 $P=P(T)$ and a function $\epsilon=\epsilon(x)$, such that
 $\lim_{|x|\to\infty}\epsilon(x)=0$ and
\begin{equation}
 \label{esti:h}
  \sup_{t\in(0,T)}|e^{t\Delta}u_0(x)-u_0(x)|\le P(T)|x|^{-3}\epsilon(x)
\end{equation}
for all $T>0$.
\end{lemma}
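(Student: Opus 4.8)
The plan is to write, for $|x|$ large and $t>0$,
\[
e^{t\Delta}u_0(x)-u_0(x)=\int_{\R^2}g_t(z)\bigl(u_0(x-z)-u_0(x)\bigr)\dd z
\]
(using $\int g_t=1$) and to split the integral into the \emph{near region} $\{|z|\le|x|/2\}$, where $x-z$ stays at distance $\ge|x|/2$ from the origin so that the decay hypotheses are available, and the \emph{Gaussian tail} $\{|z|>|x|/2\}$. I only aim at \eqref{esti:h} for $|x|$ large, which is all that is used later; for small $|x|$ the left-hand side need not even be finite, as $u_0$ may be singular near the origin. Two elementary facts will be used throughout: the exact Gaussian integrals $\int_{|z|>|x|/2}g_t(z)\dd z=e^{-|x|^2/(16t)}$, $\bigl(\int_{|z|>|x|/2}g_t(z)^2\dd z\bigr)^{1/2}=(8\pi t)^{-1/2}e^{-|x|^2/(16t)}$ and $\int_{\R^2}g_t(z)|z|\dd z=\sqrt{\pi t}$; and the fact that a Gaussian tail beats any power, i.e.\ for each $\alpha\ge0$ there is $C_\alpha>0$ with $t^{-\alpha}e^{-|x|^2/(16t)}\le C_\alpha\,(1+T)^{\alpha+3}\,|x|^{-3}(1+|x|)^{-1}$ for all $|x|\ge1$ and $0<t<T$ (maximise $t\mapsto t^{-\alpha}e^{-a/t}$ at $t=a/\alpha$ and use $e^{-s}\le C_ks^{-k}$ with $k=\alpha+2$). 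In particular any such term already has the form $P(T)|x|^{-3}\epsilon(x)$ with $\epsilon(x)=(1+|x|)^{-1}\to0$ and $P$ a polynomial.

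\emph{Cases (i) and (ii).} Set $\eta(x):=\sup_{|w|\ge|x|/2}|w|^3|u_0(w)|$, respectively $\eta(x):=\sup_{|w|\ge|x|/2}|w|^3|\nabla u_0(w)|$; by hypothesis $\eta(x)\to0$. In the near region, for $|z|\le|x|/2$, case (i) gives $|u_0(x-z)-u_0(x)|\le|u_0(x-z)|+|u_0(x)|\le 16\,\eta(x)|x|^{-3}$, and case (ii) gives, by the mean value theorem, $|u_0(x-z)-u_0(x)|\le|z|\int_0^1|\nabla u_0(x-\theta z)|\dd\theta\le 8\,\eta(x)|x|^{-3}|z|$; integrating against $g_t$ (mass $\le1$), resp.\ against $g_t|z|$ (mass $\sqrt{\pi t}\le\sqrt{\pi T}$), bounds the near region by $C(1+\sqrt T)\,\eta(x)|x|^{-3}$. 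In the Gaussian tail, Cauchy--Schwarz gives $\int_{|z|>|x|/2}g_t(z)|u_0(x-z)|\dd z\le\|u_0\|_2(8\pi t)^{-1/2}e^{-|x|^2/(16t)}$, of the required form; the remaining piece $|u_0(x)|e^{-|x|^2/(16t)}$ is handled with $|u_0(x)|\le\eta(x)|x|^{-3}$ in case (i), and in case (ii) with the pointwise bound $|u_0(x)|\le\int_{|x|}^{\infty}\bigl|\nabla u_0\bigl(r\tfrac{x}{|x|}\bigr)\bigr|\dd r=o(|x|^{-2})$, valid because $\int_1^\infty|\nabla u_0(r\omega)|\dd r<\infty$ forces the radial limit of $u_0$ to exist, and it must vanish since $u_0\in L^2$; combined with $|x|^{-2}e^{-|x|^2/(16t)}\le|x|^{-3}\sup_{r>0}(re^{-r^2/(16T)})\le C\sqrt T\,|x|^{-3}$, this piece is $\le C\sqrt T\,\eta(x)|x|^{-3}$ as well. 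Collecting the contributions gives \eqref{esti:h} with $P(T)=C(1+T^{3/2})$ and $\epsilon(x)=\eta(x)+(1+|x|)^{-1}$.

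\emph{Case (iii), the delicate one.} This is where I expect the main difficulty, since a crude pointwise bound on $u_0$ is useless here: inverting $\Delta u_0=o(|x|^{-3})$ through the $2$D Newtonian potential $\tfrac1{2\pi}\log|\cdot|$ only yields $u_0(x)=O(|x|^{-1}\log|x|)$, hopelessly far from $o(|x|^{-3})$. Instead I would route the decay of $\Delta u_0$ through the heat semigroup via the identity $e^{t\Delta}u_0-u_0=\int_0^t e^{s\Delta}(\Delta u_0)\dd s$, valid in $L^2(\R^2)$ since $\Delta u_0\in L^2$ under the present assumptions, so that $\sup_{t\in(0,T)}|e^{t\Delta}u_0(x)-u_0(x)|\le T\sup_{s\in(0,T)}|(e^{s\Delta}\Delta u_0)(x)|$, and then estimate $(e^{s\Delta}\Delta u_0)(x)=\int g_s(x-y)\Delta u_0(y)\dd y$ directly, splitting into $\{|y|>|x|/2\}$ and $\{|y|\le|x|/2\}$. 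On $\{|y|>|x|/2\}$ one has $|\Delta u_0(y)|\le 8\,\eta(x)|x|^{-3}$ with $\eta(x):=\sup_{|w|\ge|x|/2}|w|^3|\Delta u_0(w)|\to0$, so this part is $\le 8\,\eta(x)|x|^{-3}$; on $\{|y|\le|x|/2\}$ one has $g_s(x-y)\le(4\pi s)^{-1}e^{-|x|^2/(16s)}$ and $\int_{|y|\le|x|/2}|\Delta u_0(y)|\dd y\le C_0$, a constant independent of $x$ because $\Delta u_0\in L^1_{\mathrm{loc}}$ and $|\Delta u_0(y)|\lesssim|y|^{-3}$ is integrable at infinity in $\R^2$, so this part is $\le C_0(4\pi s)^{-1}e^{-|x|^2/(16s)}$, again of the required form. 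Hence $\sup_{s\in(0,T)}|(e^{s\Delta}\Delta u_0)(x)|\le P_1(T)|x|^{-3}\epsilon(x)$ with $\epsilon(x)=\eta(x)+(1+|x|)^{-1}$, and multiplication by $T$ gives \eqref{esti:h}. The only point hidden in this argument is the uniform (in $|x|$) boundedness of $\int_{|y|\le|x|/2}|\Delta u_0|$, which uses $\Delta u_0\in L^1_{\mathrm{loc}}$ (guaranteed by the standing regularity on $u_0$); together with the already-mentioned non-uniformity near $|x|=0$, these are the only technical subtleties.
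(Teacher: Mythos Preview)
Your treatment of cases (i) and (ii) is essentially the paper's argument: the paper also splits the convolution into a near region $|y|\le|x|/2$ (handled by Taylor of order $0$ or $1$) and a far region (handled by $u_0\in L^2$ and the Gaussian tail), with only cosmetic differences in bookkeeping.

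For case (iii) you take a genuinely different route. The paper stays with the Taylor scheme: it goes to $m=2$, uses the radial symmetry of $g_t$ so that only $\Delta u_0$ (and not individual second partials) appears in the near region, and then, to control the remaining terms $D_2,D_3,D_4$, it has to prove that $u_0$ has at most polynomial growth at infinity. It does this via the Newtonian potential, writing $u_0=\phi+\chi u_0$, setting $\psi=\frac{1}{2\pi}\log|\cdot|\!*\!\Delta(\chi u_0)$, and observing that $\chi u_0-\psi$ is harmonic and hence a polynomial. Your approach bypasses all of this: using $e^{t\Delta}u_0-u_0=\int_0^t e^{s\Delta}\Delta u_0\,\dd s$ puts $\Delta u_0$ directly in the integrand, so no growth control on $u_0$ itself is ever needed, and no appeal to the symmetry of $g_t$ to kill mixed second derivatives is required either. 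This is cleaner.

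Two small points. First, the justification ``valid in $L^2$ since $\Delta u_0\in L^2$'' is not quite right: the hypotheses give no local $L^2$ control on $\Delta u_0$. What you actually need (and what you correctly isolate at the end) is $\Delta u_0\in L^1_{\rm loc}$; together with $\Delta u_0(x)=o(|x|^{-3})$ this gives $\Delta u_0\in L^1(\R^2)$, the convolution $g_s*\Delta u_0$ is a continuous function, and the semigroup identity holds in $\mathcal{S}'$ and hence pointwise where both sides are continuous, i.e.\ for large $|x|$. Second, your comment that \eqref{esti:h} is only asserted for large $|x|$ is exactly right and matches how the lemma is used.
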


\begin{proof}
%This is a variant of~\cite[Proposition~2]{BraARMA}.
For $m=0,1,2$, let us introduce the four terms
\begin{equation*}
\begin{split}
&D_1\equiv\int_{|y|\le |x|/2} \Bigl[u_0(x-y)-\sum_{|\gamma|\le m-1} 
\frac{(-1)^{|\gamma|}}{\gamma!}\partial^\gamma u_0(x)y^\gamma\Bigr]g_t(y)\dd y,\\
&D_2\equiv\int_{|y|\le |x|/2} g_t(x-y)  u_0(y)\dd y,\\
&D_3\equiv\int_{|y|\ge |x|/2,\; |x-y|\ge |x|/2} u_0(x-y)g_t(y)\dd y\dd y
\end{split}
\end{equation*}
and
\[
D_4\equiv - \sum_{|\gamma|\le m-1} 
\frac{(-1)^{|\gamma|}}{\gamma!}\partial^\gamma u_0(x)
\int_{|y|\ge|x|/2} y^\gamma g_t(y)\dd y.
\]
In the case (i), we choose above $m=0$, so that 
\[
e^{t\Delta}u_0=D_1+D_2+D_3\qquad\text{and}\qquad D_4=0.
\]
Using that $\|g_t\|_1=1$, we see that the $D_1$ and  $D_3$ integrals are $o(|x|^{-3})$, uniformly with respect to $t\in(0,\infty)$. For $D_2$, we use $\sup_{|y|\le|x|/2} g_t(x-y)\le Ct|x|^{-4}$, and that $\int_{|y|\le |x|/2}|u_0(y)|\dd y\le C\log(e+|x|)$.
Here, and thoughout the proof, $C>0$ will denote a suitable constant depending only on $u_0$.
This proves the result~\eqref{esti:h} with a polynomial $P$ of degree~$1$.

In the case (ii), we choose $m=1$ above, so that  $D_4=-u_0(x)\int_{|y|\ge |x|/2} g(y)\dd y$.
 As $\int g_t=1$, we see that 
\[
e^{t\Delta}u_0-u_0=D_1+D_2+D_3+D_4.
\]
 By the first-order Taylor formula and assumption~(ii),
 \[
 |D_1|\le \bigl(\sup_{|y|\ge |x|/2}|\nabla u_0(y)|\bigr)\int_{|y|\le |x|/2}|y|g_t(y)\dd y
 =\sqrt t\, o(|x|^{-3}).
 \] This estimate for $D_1$ is in agreement with~\eqref{esti:h}.  
For $D_2$ we can use the inequality 
\[
\sup_{|y|\le|x|/2} g_t(x-y)\le C|x|^{-7}t^{5/2}.
\]
Next, the gradient estimate of $u_0$ implies that $u_0$ is Lipschitz outside a ball of large radius. Hence, $\int_{|y|\le |x|/2} |u_0|\le C(1+|x|)^3$. The fast decay of the heat kernel thus settles the $D_2$ integral.
For the integral $D_3$, using again the Lipschitz property of $u_0$ we have
\[
|D_3|\le C\int_{|y|\ge |x|/2}(1+|y|)g_t(y)\dd y\le C(t^2+t^{5/2})|x|^{-4}.
\] 
For $D_4$, we write, for some $R_0>0$ dependent only on $u_0$ and all $|x|\ge R_0$,
\[
|D_4|\le C|x|\int_{|y|\ge |x|/2}|y|g_t(y)\dd y\le Ct^{3}|x|^{-4}.
\] 
This establishes~\eqref{esti:h} with a polynomial~$P$ of degree~3.

In case (iii), 
we choose $m=2$. Notice that in the summations over $\gamma$, all the termes 
 corresponding to $|\gamma|=1$ vanish after integrating with respect to $y$ (because
of the anti-symmety of $y_1g_t(y)$ and $y_2g(y)$. Hence, $D_4$ is the same as in case (ii) and $D_1+D_2+D_3+D_4$ equals $e^{t\Delta}u_0-u_0$, as before. But now in $D_1$ we can apply the second-order Taylor formula.
In fact, in $D_1$, the mixed derivatives $\partial_j\partial_k u_0$ will play no role when $j\not= k$, because 
$y_jy_k g(y)$ is anti-symmetric and vanish after integration.
And $\int_{|y|\le |x|/2} y_1^2g_t(y)\dd y=\int_{|y|\le|x|/2} y_2^2g_t(y)\dd y$.
Hence,
\[
D_1=\frac14\int_0^1\int_{|y|\le|x|/2} \Delta u_0(x-\theta y) |y|^2g_t(y)\dd y\dd \theta.
\]
Thus,
\[|D_1|\le C\,t\,\bigl(\sup_{|y|\ge |x|/2} |\Delta u_0(y)|\bigr),\]
and this can be bounded as in
the right-hand side of~\eqref{esti:h}, by assumption (iii). 
To estimate the other terms, we first need a control on the growth of $u_0$ at infinity.
We can write $u_0=\phi+\chi u_0$ where $\phi$ is an $L^2$-compactly supported function,
$\chi$ is smooth and $\chi\equiv0$ near the origin, 
$\chi\equiv1$ in a neighbourhood of infinity, and such that $|\Delta(\chi u_0)(x)|\le C(1+|x|) ^{-3}$.
Letting 
\[
\psi=\frac1{2\pi}\int \log(|x-y|)\Delta(\chi u_0)(y)\dd y,
\] we see that
$\Delta\psi=\Delta( \chi u_0)$, so that $\chi u_0-\psi$ is a harmonic polynomial.
Moreover, $\psi$ has a logarithmic growth. It follows that $|u_0(x)|$ is bounded by some polynomial $Q(x)$ for large enough $|x|$.
But then, the estimates of the other terms $D_2$, $D_3$ and $D_4$ can be performed essentially as before. In~\eqref{esti:h}, the degree of $P$ will then 
depend on that of~$Q$.
\end{proof}

Notice that conclusion of the Lemma remains true if one replaces the $L^2$-condition on $u_0$ by the more general one $u_0\in \mathcal{E}'(\R^2)+L^1_{\rm loc}(\R^2)$,
where $\mathcal{E}'(\R^2)$ is the space of compactly supported distributions.
 In case iii), however, one would need also some ``controlled growth at infinity'' for $u_0$. For example, 
$u_0(x)=O(e^{|x|^\alpha})$ for some $0\le \alpha<2$.

\section{Proof of Theorem~\ref{th:mild}, 
Theorem~\ref{th:strong} and Theorem~\ref{th:hexagonal}}
\label{sec:proofth}

The proof of the main theorems is a simple consequence of the persistence properties of the spatial decay (Proposition~\ref{prop:dec12} and Proposition~\ref{prop:dec32}), of the two previous lemmas, and a few remarkable properties of the kernel $F$ in 2D.

\begin{proof}[Proof of Theorem~\ref{th:mild}]
Under the assumptions of Theorem~\ref{th:mild},
Proposition~\ref{prop:dec12} applies. Hence, $u$ satisfies~\eqref{dec-u2}.
Hence, $w=u\otimes u$ does satisfy the conditions of Lemma~\ref{lem:cruc2} 
with $a=1/2+2/p<1$ and $d=2$.

Hence, applying also case (iii) of Lemma~\ref{lem:h}, we get, for all fixed $t\in(0,T)$,

\begin{equation}
\label{as:pro1}
\begin{split}
 u(x,t)
 &=u_0(x)+\FF(x):\int_{0}^t\!\int (u\otimes u)(y,s)\dd y\dd s + o_t(|x|^{-3})\\
 &=u_0(x)+\nabla H(x,t)+ o_t(|x|^{-3}),
 \end{split}
\end{equation}
where $H$ was defined in~\eqref{def:Ha}.
Here, $o_t(|x|^{-3})$ denotes a time-dependent function decaying faster than $|x|^{-3}$ at the spatial infinity.

Let us now study in more detail the vector fields  of the form
$(x_1,x_2)\mapsto \nabla H(x_1,x_2,t)$.
Such vector fields of potential type  consist of homogeneous functions of degree $-3$: their components are linear combinations of third-order derivatives of $E_2(x_1,x_2)=-\frac{1}{4\pi}\log(x_1^2+x_2^2)$.
To this purpose, let us fix $t>0$ and denote
\begin{equation}
\label{en:mat}
\begin{pmatrix}
a&b\\b&d
\end{pmatrix}
:=
\begin{pmatrix}
\int_0^t\int u_1^2(x,s)\dd y\dd s & \int_0^t\int (2u_1u_2)(x,s)\dd y\dd s\\
\int_0^t \int (2u_1u_2)(x,s)\dd y\dd s &\int_0^t\int u_2^2(x,s)\dd y\dd s
\end{pmatrix}.
\end{equation}
From the expression of $E_2(x_1,x_2)$ we get
\begin{equation}
\label{nabH}
\begin{split}
 \nabla H(x_1,x_2,t)
 &=
\begin{pmatrix}
 \partial_1\\
 \partial_2
\end{pmatrix}
H(x_1,x_2,t)
=
\begin{pmatrix}
 a\partial_1^3+b\partial_1^2\partial_2+d\partial_1\partial_2^2\\
 a\partial_1^2\partial_2+b\partial_1\partial_2^2+d\partial_2^3
\end{pmatrix}
E_2(x_1,x_2)\\
&=
\frac{1}{\pi(x_1^2+x_2^2)^3}
\begin{pmatrix}
 {-(a-d)(x_1^3-3x_1x_2^2)-b(3x_1^2x_2-x_2^3)}\\
  {(a-d)(x_2^3-3x_1^2x_2)+b(x_1^3-3x_1x_2^2)}
\end{pmatrix}.
\end{split}
\end{equation}
Let us compute $|\nabla H(x,t)|=\sqrt{\partial_1 H(x,t)^2+\partial_2 H(x,t)^2}$.
A crucial remark, specific to the 2D case, is that $|\nabla H(x,t)|$ is a radial function for all possible choice of~$a$, $b$ and~$d$. Indeed, by a direct computation we get
\begin{equation}
\label{eq:speed}
\begin{split}
|\nabla H(x,t)|
&=\frac{\sqrt{(a-d)^2+b^2}}{\pi|x|^3}.
\end{split}
\end{equation}
%Going back to our original notations, we proved that
%\begin{equation}
%|\nabla H(x,t)|=\textstyle\frac1\pi|x|^{-3}\sqrt{\bigl(\textstyle\int_0^t\!\!\int (u_1^2-u_2^2)\dd y\dd s\bigr)^2
%+4\bigl(\int_0^t\!\!\int u_1u_2\dd y \dd s\bigr)^2}\,.
%\end{equation}
It then follows that it does exist the limit
\[
\begin{split}
\lim_{|x|\to+\infty}|x|^{3}|u(x,t)-u_0(x)|=\textstyle\frac{1}{\pi}\sqrt{(a-d)^2+b^2}.
\end{split}
\]
Going back to the original notations, the above limit equals
\begin{equation*}
L(t)=\textstyle\frac1\pi\sqrt{\bigl(\textstyle\int_0^t\!\!\int (u_1^2-u_2^2)\dd y\dd s\bigr)^2
+\bigl(\int_0^t\!\!\int 2u_1u_2\dd y \dd s\bigr)^2}\,.
\end{equation*}
\end{proof}

\begin{proof}[Proof of Theorem~\ref{th:strong}]

The proof is the same as above, the only change is that one needs to apply 
Proposition~\ref{prop:dec32}
instead of Proposition~\ref{prop:dec12}, next case (i) of Lemma~\ref{lem:h}, and finally
and Lemma~\ref{lem:cruc}
instead of Lemma~\ref{lem:cruc2}.
\end{proof}

\begin{proof}[Proof of Theorem~\ref{th:hexagonal}.]
Because of the invariance of the Navier--Stokes equations under rotations, we can assume
without loss of generality that $v=u_2$, the vertical component
of the velocity field.
Under the assumptions of Theorem~\ref{th:mild},
by the asymptotic profile~\eqref{as:pro1},
we see that
\[
u_2(x,t)-u_{0,2}(x)=\partial_2 H(x,t)+o_t(|x|^{-3}) \qquad\text{as $|x|\to+\infty$}.
\]
Under the assumptions of Theorem~\ref{th:strong} the term $u_{0,2}(x)$ on the left-hand side
can be incorporated inside the remainder terms.

Let $P(\theta,t)=\nabla H(\cos\theta,\sin\theta,t)$.  We easily get, rewriting~\eqref{nabH} in terms of trigonometric functions,
\[
\begin{split}
P(\theta,t)
&=\frac{1}{\pi}
\begin{pmatrix}
 (d-a)\cos(3\theta)-b\sin(3\theta)\\
 (d-a)\sin(3\theta)+b\cos(3\theta)
\end{pmatrix}\\
&=\frac{\sqrt{(d-a)^2+b^2}}{\pi}
\begin{pmatrix}
 \cos(3\theta+\alpha)\\
 \sin(3\theta+\alpha)
\end{pmatrix},
\end{split}
\]
where the angle $\alpha\in[0,2\pi)$, 
which is the argument of the complex number $z(t)$,
is uniquely defined by the system
\begin{equation}
\label{angle}
\begin{cases}
\cos\alpha=\frac{d-a}{\sqrt{(d-a)^2+b^2}}\\
\sin \alpha=\frac{b}{\sqrt{(d-a)^2+b^2}}.
\end{cases}
\end{equation}
Here $\alpha$, just like $a$, $b$ and $d$, depends on time.
In particular,
\[
|P(\theta,t)|=\frac{\sqrt{(a-d)^2+b^2}}{\pi},
\]
and the fact that the right-hand side is independent of $\theta$ is another way of recovering the already observed fact that $|\nabla H(\cdot,t)|$ is radial. 
Moreover, for $x=(R\cos\theta,R\sin\theta)$, with $R>0$,
\[
\partial_2 H(x,t)=\partial_2 H(R\cos\theta,R\sin\theta)=
\frac{\sqrt{(d-a)^2+b^2}}{\pi}R^{-3}\sin(3\theta+\alpha).
\]
Then, for any fixed $t>0$ and $\theta\in[0,2\pi)$,
\[
R^3 u_2(R\cos\theta,R\sin\theta,t)-R^3u_{0,2}(R\cos\theta,R\sin\theta)=
\frac{\sqrt{(a-d)^2+b^2}}{\pi}\sin(3\theta+\alpha)+o(1),
\]
as $R\to+\infty$.
Let $t$ such that $L(t)\not=0$. For such times~$t$, we have $(a-d)^2+b^2\not=0$. 
It then just remains to check whether or not the term $\sin(3\theta+\alpha)$ vanishes. 
To the six zeros in $[0,2\pi[$ of the periodic 
function $\theta\mapsto\sin(3\theta+\alpha)$
correspond six distinct points in the circle: $e^{-i\alpha/3 +k\pi}$, $k=0,\ldots,5$. These are the $6^{\text{th}}$-complex roots of $e^{-2i\alpha}$. 
But $e^{i\alpha}=z(t)/|z(t)|$,
so the assertion of the theorem applies, for the vertical component $v=u_2$,
with~$\zeta_t=(|z(t)|^2/z(t)^2)$.
\end{proof}

The function $L(t)$ (see~\eqref{limit:L} for the definition) and the hexagons $\mathcal{H}(t)$ can be defined for any 2D Leray solution, even though one should not expect such objects play any special role, if one just assumes $u_0\in L^2_\sigma(\R^2)$, without any additional decay condition on the data.

Ruling out the non-generic situation in which the matrix  in~\eqref{en:mat} is a multiple of the identity matrix, \emph{i.e.}, assuming that 
\[
(d-a)^2+b^2\not=0,
\]
we see that the term $\nabla H(\cdot,t)$ is not identically zero. 
In fact one generically expects that $L(t)\not=0$.
This, of course, is an useful information in the application of asymptotic profiles like~\eqref{as-pro}.
Next remark allows to establish rigorously that $L(t)\not=0$ at least for a short time interval, as soon as one starts from a ``non-symmetric'' initial datum.
Therefore, the formation of hexagonal structures can be granted at least in some
interval $(0,T_0)$.
 
 \begin{remark}
 \label{cor:ns}
  Let $u_0\in L^2_\sigma(\R^2)$ and $u$ the associated Leray's solution. 
  Assume also that $u_0$ satisfies the following ``non-symmetry'' conditions:
 \begin{equation}
 \label{nonsym}
 \mbox{The $2\times 2$ matrix $\int (u_0\otimes u_0)(x)\dd x$ is \underline{not} a scalar multiple of the identity matrix.}
 \end{equation}
Then there exists $T_0>0$ such that
$L(t)\not=0$ for all $t\in(0,T_0)$.
\end{remark}

The proof of the remark is immediate: it relies on the fact that
\[
\lim_{t\to0^+}\frac{L(t)}{t}=
\textstyle\frac{1}{\pi}\sqrt{\bigl(\textstyle\int (u_{0,1}^2-u_{0,2}^2)\dd y\bigr)^2
+\bigl(\textstyle\int 2u_{0,1}u_{0,2}\dd y\bigr)^2.}
\]
Therefore, under condition~\eqref{nonsym}, $L(t)$ cannot vanish
when $t>0$ is small enough.

Notice that the non-symmetry condition~\eqref{nonsym} can be reformulated in an
equivalent way as follows: ``there exists a coordinate system such that $\int u_{0,1}^2\not=\int u_{0,2}^2$''. Yet another equivalent formulation is: ``there exists a coordinate system such that $\int u_{0,1}u_{0,2}\not=0$''.

\section{Large time behavior of hexagonal structures}
\label{large_time}

The goal of this section is to estimate the angular velocity 
\[
\dot{\mathcal{H}}(t)
\]
of the hexagons $\mathcal{H}(t)$. Even though the orientation of the hexagon $\mathcal{H}(t)$ depends on the of the component $v$ of the velocity field vector, its angular velocity is independent on~$v$.
To study the large time behavior of the spatial limit $L(t)$,
we need the following Lemma.

\begin{lemma}
\label{lem:largeti}
Let $u_0\in L^2_\sigma(\R^2)\cap \dot H^{-1}(\R^2)$. Then the corresponding Leray's solution belongs to $L^2([0,\infty),L^2(\R^2))$.
In this case, $L(t)$ does have a limit as $t\to+\infty$ and
\[
\lim_{t\to+\infty} L(t)=
\textstyle\frac1\pi\sqrt{\bigl(\textstyle\int_0^\infty\!\!\int (u_1^2-u_2^2)\dd y\dd s\bigr)^2
+\bigl(\int_0^\infty\!\!\int 2u_1u_2\dd y \dd s\bigr)^2}\,.
\]
\end{lemma}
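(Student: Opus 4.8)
The statement splits into two parts: (a) the claim that $u_0\in L^2_\sigma\cap\dot H^{-1}(\R^2)$ forces $u\in L^2((0,\infty),L^2(\R^2))$, and (b) the convergence of $L(t)$ to the stated value, granting (a). Part (b) is immediate, so I would dispose of it first: if $u\in L^2_tL^2_x$ then $u_1^2,u_2^2,u_1u_2\in L^1(\R^2\times(0,\infty))$, hence the scalar functions $s\mapsto\int(u_1^2-u_2^2)(y,s)\dd y$ and $s\mapsto\int 2u_1u_2(y,s)\dd y$ lie in $L^1(0,\infty)$ (both are pointwise dominated by $\|u(s)\|_2^2$). Consequently the two time-integrals in~\eqref{complex} converge as $t\to+\infty$, so $z(t)\to z_\infty:=\int_0^\infty\!\!\int(u_1^2-u_2^2)+i\int_0^\infty\!\!\int 2u_1u_2$, and $L(t)=\tfrac1\pi|z(t)|\to\tfrac1\pi|z_\infty|$, which is exactly the announced formula. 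Thus the whole content of the lemma is part (a).

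For (a) the plan is to run an $\dot H^{-1}$-energy estimate and absorb the nonlinearity into the viscous dissipation. Pairing (NS) with $(-\Delta)^{-1}u$ (the pressure term dies since $u$ is divergence free, and $\langle u,\Delta u\rangle_{\dot H^{-1}}=-\|u\|_2^2$) gives
\[
\frac{d}{dt}\|u(t)\|_{\dot H^{-1}}^2+2\|u(t)\|_2^2=-2\,\langle u,\P\nabla\cdot(u\otimes u)\rangle_{\dot H^{-1}}.
\]
The only real computation is the nonlinear term: writing $w=(-\Delta)^{-1}u$ (which is divergence free, so $\P$ may be dropped) and integrating by parts twice, the condition $\nabla\cdot u=0$ cancels the term containing $\nabla|\nabla w|^2$ and leaves the clean identity $\langle u,\P\nabla\cdot(u\otimes u)\rangle_{\dot H^{-1}}=-\sum_{j,k,l}\int(\partial_kw_j)(\partial_lw_j)(\partial_lu_k)\dd x$, whose absolute value is $\le\int|\nabla u|\,|\nabla w|^2\le\|\nabla u\|_2\,\|\nabla w\|_4^2$. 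Then the Sobolev embedding $\dot H^{1/2}(\R^2)\hookrightarrow L^4(\R^2)$, the identity $\|\nabla w\|_{\dot H^{1/2}}=\|u\|_{\dot H^{-1/2}}$ (read off the Fourier symbols), and the interpolation $\|u\|_{\dot H^{-1/2}}\le\|u\|_{\dot H^{-1}}^{1/2}\|u\|_2^{1/2}$ yield $\bigl|\langle u,\P\nabla\cdot(u\otimes u)\rangle_{\dot H^{-1}}\bigr|\le C\|\nabla u\|_2\,\|u\|_{\dot H^{-1}}\,\|u\|_2$.

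Inserting this and absorbing one factor $\|u\|_2$ by $2ab\le a^2+b^2$ gives $\frac{d}{dt}\|u\|_{\dot H^{-1}}^2+\|u\|_2^2\le C^2\|\nabla u\|_2^2\,\|u\|_{\dot H^{-1}}^2$. The energy equality supplies the crucial global bound $\int_0^\infty\|\nabla u(s)\|_2^2\dd s\le\tfrac12\|u_0\|_2^2<\infty$, so Grönwall applied to $y(t)=\|u(t)\|_{\dot H^{-1}}^2$ gives $\sup_{t>0}\|u(t)\|_{\dot H^{-1}}^2\le\|u_0\|_{\dot H^{-1}}^2\,e^{\frac{C^2}{2}\|u_0\|_2^2}=:K$. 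Feeding $K$ back into the differential inequality and integrating over $(0,T)$ yields $\int_0^T\|u(s)\|_2^2\dd s\le\|u_0\|_{\dot H^{-1}}^2+\tfrac{C^2K}{2}\|u_0\|_2^2$ uniformly in $T$, hence $u\in L^2((0,\infty),L^2(\R^2))$, and (a) follows.

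The hard part is not this formal computation but its rigorous justification for a Leray solution: propagating the finiteness of $\|u(t)\|_{\dot H^{-1}}$ and validating the $\dot H^{-1}$-identity down to $t=0$ (note that $L^2(\R^2)\not\hookrightarrow\dot H^{-1}(\R^2)$, so continuity of $u$ in $L^2$ alone is not enough at the origin). In $2$D the solution is smooth for $t>0$, so the identity holds classically on $(0,\infty)$; to close the argument near $t=0$ I would approximate $u_0$ by smooth data $u_0^{(n)}\to u_0$ in $L^2\cap\dot H^{-1}$, carry out the above estimates for the corresponding solutions with constants depending only on $\|u_0^{(n)}\|_2$ and $\|u_0^{(n)}\|_{\dot H^{-1}}$, and pass to the limit using $L^2$-stability of the $2$D Navier--Stokes flow together with weak lower semicontinuity of the $L^2$- and $\dot H^{-1}$-norms. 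Alternatively, part (a) can simply be quoted from the classical literature on $L^2$-decay of finite-energy Navier--Stokes flows, where the characterization "$u\in L^2_tL^2_x\iff u_0\in\dot H^{-1}$" is known.
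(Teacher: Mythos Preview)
Your proof is correct, and your treatment of part~(b) coincides with what the paper leaves implicit. For part~(a), however, you take a genuinely different route. The paper does not run an $\dot H^{-1}$-energy estimate at all: it first shows, by a one-line Fourier computation, that $\|e^{t\Delta}u_0\|_2^2\le C(1+t)^{-1}$ whenever $u_0\in L^2\cap\dot H^{-1}$; it then invokes Wiegner's theorem, which gives $\|u(t)-e^{t\Delta}u_0\|_2^2\le C(1+t)^{-2}\log^2(e+t)$, so that $u\in L^2(\R^+,L^2)$ if and only if $e^{t\Delta}u_0\in L^2(\R^+,L^2)$; finally, the identity $\int_0^\infty\|e^{t\Delta}u_0\|_2^2\dd t=\tfrac12\|u_0\|_{\dot H^{-1}}^2$ (Fubini in Fourier) closes the argument. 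This is shorter and even yields the equivalence $u\in L^2_tL^2_x\iff u_0\in\dot H^{-1}$, but it relies on Wiegner's theorem as a black box. Your direct $\dot H^{-1}$ identity, the cancellation of the $(u\cdot\nabla)|\nabla w|^2$ term, and the Gr\"onwall closure are entirely self-contained and give an explicit bound $\int_0^\infty\|u\|_2^2\dd t\le\|u_0\|_{\dot H^{-1}}^2+\tfrac{C^2K}{2}\|u_0\|_2^2$ in terms of the data alone; the price is that you only get the implication needed for the lemma (not the converse), and you must handle the justification near $t=0$ that you correctly flag. Both approaches are standard in the decay literature; yours is closer in spirit to Schonbek's Fourier-splitting or energy methods, the paper's to the Wiegner comparison principle.
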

\begin{proof}
Indeed, we have the obvious estimate $\|e^{t\Delta}u_0\|_2^2\le \|u_0\|_2^2$, and also
$\int e^{-2t|\xi|^2}|\widehat u_0(x)|^2\dd\xi\le \|u_0\|_{\dot H^{-1}}^2(\sup_{\xi}e^{-2t|\xi|^2}|\xi|^2)\lesssim \|u_0\|_{\dot H^{-1}}^2t^{-1}$.
Hence,
\[
\|e^{t\Delta}u_0\|_2^2\le C(1+t)^{-1}.
\]
Wiegner's theorem~\cite{Wie87} applies and gives the following 
$L^2$-estimate for the difference $u-e^{t\Delta}u_0$:
\[
\|u(t)-e^{t\Delta}u_0\|_2^2\le C (1+t)^{-2}\log^2(e+t).
\]
Therefore, $u\in L^2(\R^+,L^2(\R^2))$ if (and only if) $e^{t\Delta}u_0\in L^2(\R^+,L^2(\R^2))$.
But,
\[
\int_0^\infty\!\!\!\int e^{-2t|\xi|^2}|\widehat u_0(\xi)|^2\dd\xi\dd t=\int|\xi|^{-2}|\widehat u_0(\xi)|^2\dd \xi= \|u_0\|_{\dot H^{-1}}.
\]
Thus, $e^{t\Delta}u_0\in L^2(\R^+,L^2(\R^2))$ if and only if 
$u_0\in \dot H^{-1}(\R^2)$ and the conclusion follows.
\end{proof}

In view of our next corollary, let us introduce the following notion.
\begin{definition}
We call \emph{generic} a Leray's solution in $\R^2$ such that
\begin{equation}
\label{generic}
\underline{L}:=\liminf_{t\to+\infty} L(t)>0,
\end{equation}
where $L(t)$ is given by~\eqref{limit:L}.
\end{definition}

We do not attempt to give a precise topological description of this notion of genericity. This terminology is justified by the fact that the condition
$\lim_{t\to+\infty} L(t)=0$ is expected to achievable only with special solutions,
like those featuring specific simmetries, and that in all the other cases~\eqref{generic} holds.
Of course, in the case $u_0\in L^2_\sigma\cap \dot H^{-1}(\R^2)$, 
applying Lemma~\ref{lem:largeti}, shows that a Leray solution is generic if and only if
\begin{equation}
\label{non-symt}
\Bigl(\textstyle\int_0^\infty\!\!\int u_h^2u_k^2\dd y\dd s\Bigr)_{h,k} 
\text{
\emph{is not a scalar multiple of the identity matrix}}.
\end{equation} 
This is the analogue, for the solution $u$, of the non-symmetry condition~\eqref{nonsym} for the datum $u_0$.

Condition~\eqref{non-symt} first appeared in~\cite{MiyS01} in connection with 
the construction of \emph{fast dissipative flows}. Namely,
the main result of~\cite{MiyS01} essentially states that weak solutions of solution $u$ of Navier--Stokes in $\R^d$ are rapidly dissipative, \emph{i.e.} $\|u(t)\|_2^2=o(t^{-(d+1)/2})$ if and only if $\|e^{t\Delta}u_0\|_2^2=o(t^{-(d+1)/2})$ and $u$ 
does \underline{not} satisfy~\eqref{non-symt}.
See also~\cite{GalW02} for an insightful analysis of such flows using the invariant manifolds theory.

Next corollary reveals that condition~\eqref{non-symt},
and its more general formulation~\eqref{generic}, not only appers in the setting
of rapidly dissipative flows, but has a deeper signification in the large time
structure of the flow.

\begin{corollary}\hfill
\label{cor:larget}
\begin{itemize}
\item[i)]
For generic Leray's solutions, the angular speed $\dot{\mathcal{H}}(t)$ of the hexagonal structure 
is such that
\[
|\dot{\mathcal{H}}(t)|=O(\|u(t)\|^2_2)
\qquad\text{as $t\to+\infty$}.
\]
In particular, this angular speed slow down to zero for large time.
\item[ii)]
If $u_0\in \dot H^{-1}(\R^2)$ and $u$ satisfies~\eqref{non-symt}, then the hexagon $\mathcal{H}(t)$ converge to a stationary position $\mathcal{H}_{\infty}$ as $t\to+\infty$.
\end{itemize}
\end{corollary}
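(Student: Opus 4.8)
The plan is to track how the hexagon $\mathcal{H}(t)$ is parametrized and differentiate. Recall from the proof of Theorem~\ref{th:hexagonal} that the orientation of $\mathcal{H}(t)$ is governed by the angle $\alpha(t)=\arg z(t)$, where $z(t)$ is the complex map~\eqref{complex}: the vertices are the sixth roots of a unit complex number whose argument is an affine function of $\alpha(t)$, hence the angular position of the hexagon is $\alpha(t)/3$ up to an additive constant, and its angular speed is $\dot{\mathcal H}(t)=\dot\alpha(t)/3$. Writing $z(t)=z_1(t)+iz_2(t)$ with $z_1(t)=\int_0^t\!\!\int(u_1^2-u_2^2)\dd y\dd s$ and $z_2(t)=\int_0^t\!\!\int 2u_1u_2\dd y\dd s$, we have, whenever $L(t)\ne0$ (so $z(t)\ne0$),
\[
\dot\alpha(t)=\frac{z_1(t)\dot z_2(t)-z_2(t)\dot z_1(t)}{|z(t)|^2},
\]
and $\dot z_1(t)=\int (u_1^2-u_2^2)(y,t)\dd y$, $\dot z_2(t)=\int 2u_1u_2(y,t)\dd y$ by the fundamental theorem of calculus (here one uses that $u(t)\in L^2$ so these spatial integrals are finite and continuous in $t$).

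For part~i), the key bounds are: $|\dot z_j(t)|\le\|u(t)\|_2^2$ for $j=1,2$ (Cauchy--Schwarz on $|u_1u_2|$ and the trivial bound on $u_1^2-u_2^2$ by $|u(t)|^2$), and $|z(t)|=\pi L(t)\ge\pi\underline L>0$ for all $t$ large enough by the genericity assumption~\eqref{generic}. Also $|z_j(t)|\le|z(t)|$, so the numerator of $\dot\alpha(t)$ is $O(|z(t)|\,\|u(t)\|_2^2)$. Dividing by $|z(t)|^2\ge(\pi\underline L)^2$ gives $|\dot\alpha(t)|=O(\|u(t)\|_2^2)$, hence $|\dot{\mathcal H}(t)|=O(\|u(t)\|_2^2)$. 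Since Leray's energy inequality forces $\|u(t)\|_2\to0$ (more precisely $\|u(t)\|_2^2$ is integrable in time and decreasing, hence $\to0$), the angular speed tends to zero.

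For part~ii), the extra hypothesis $u_0\in\dot H^{-1}$ together with condition~\eqref{non-symt} puts us in the situation of Lemma~\ref{lem:largeti}: the solution lies in $L^2((0,\infty),L^2(\R^2))$, and~\eqref{non-symt} guarantees $\lim_{t\to\infty}L(t)>0$, so $z(t)$ stays away from $0$ and converges to a nonzero limit $z_\infty:=\lim_{t\to\infty}z(t)$ (the limit exists because $\dot z_j\in L^1((0,\infty))$, being dominated by $\|u(t)\|_2^2\in L^1$). Consequently $\alpha(t)=\arg z(t)$ converges to $\arg z_\infty$, the map $\sigma_t=|z(t)|^2/z(t)^2$ converges, and so each vertex of $\mathcal H(t)$, being a continuous function of $\sigma_t$, converges; this defines the stationary hexagon $\mathcal H_\infty$.

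The main obstacle is essentially bookkeeping rather than a deep difficulty: one must justify the differentiation of $z(t)$ (continuity in $t$ of $\int u_hu_k(y,t)\dd y$, which follows from $u\in C([0,\infty),L^2)$), and one must be careful that the statement $\dot{\mathcal H}(t)=\dot\alpha(t)/3$ is only meaningful where $L(t)\ne0$ — but for generic solutions, and a fortiori under~\eqref{non-symt}, this holds for all sufficiently large $t$, so the large-time assertions are unambiguous. The only genuinely nontrivial input is the integrability $\|u(t)\|_2^2\in L^1((0,\infty))$ under $u_0\in\dot H^{-1}$, which is exactly what Lemma~\ref{lem:largeti} supplies.
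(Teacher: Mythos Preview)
Your approach is essentially the paper's own: both differentiate $\alpha(t)=\arg z(t)$, bound the numerator $|z_1\dot z_2-z_2\dot z_1|$ by $|z(t)|\|u(t)\|_2^2$ (up to a constant), and use $|z(t)|=\pi L(t)\ge\pi\underline L$ for large $t$ to obtain $|\dot{\mathcal H}(t)|=O(\|u(t)\|_2^2)$. For part~ii) both integrate $|\dot z_j|\le\|u(t)\|_2^2\in L^1((0,\infty))$ (supplied by Lemma~\ref{lem:largeti}) to get convergence of $z(t)$, hence of the hexagon.

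There is, however, one genuine slip in your justification of $\|u(t)\|_2\to0$ in part~i). You write that ``Leray's energy inequality forces $\|u(t)\|_2\to0$ (more precisely $\|u(t)\|_2^2$ is integrable in time and decreasing, hence $\to0$)''. This is not what the energy identity says: it gives $\int_0^\infty\|\nabla u(s)\|_2^2\dd s<\infty$, not $\int_0^\infty\|u(s)\|_2^2\dd s<\infty$. The latter is false in general for $u_0\in L^2_\sigma$ only (it is precisely the content of Lemma~\ref{lem:largeti} that the extra hypothesis $u_0\in\dot H^{-1}$ is what yields it). That $\|u(t)\|_2$ is merely decreasing does not force it to zero. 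The decay $\|u(t)\|_2\to0$ for arbitrary $u_0\in L^2_\sigma(\R^2)$ is nonetheless true, but it is a nontrivial theorem of Kato and Masuda (see~\cite{Wie87}), which is exactly what the paper invokes. With that citation in place of your parenthetical remark, your argument is complete and matches the paper's.
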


\begin{proof}
To estimate the angular speed of the hexagons 
$\mathcal{H}=\mathcal{H}(t)$ we compute the time
derivative of the function $\alpha=\alpha(t)$ defined in~\eqref{angle}.
We find
\begin{equation*}
\label{der-angle}
|\dot{\mathcal{H}}|=\frac{|b'(d-a)-b(d-a)'|}{(d-a)^2+b^2}.
\end{equation*}
Therefore, recalling the definition of $L=L(t)$ in~\eqref{limit:L},
\begin{equation*}
\begin{split}
|\dot{\mathcal{H}}|
&\le \frac{|b'|+|d'-a'|}{\sqrt{(d-a)^2+b^2} }\\
&=\frac{|\int 2u_1u_2\dd x|+|\int (u_1^2-u_2^2)\dd x|}{\pi L}\\
%&\le\frac{2\|u_1\|_2\|u_2\|_2+\bigl| \,\|u_1\|_2^2-\|u_2\|_2^2\,\bigr|}{\pi L}.
\end{split}
\end{equation*}
Maximizing the numerator under energy constraint we finally get
that the angular speed of $\mathcal{H}$ is estimated by
\begin{equation}
\label{ang-speed}
|\dot{\mathcal{H}}(t)|\le \frac{\sqrt2\,\|u(t)\|_2^2}{\pi L(t)}.
\end{equation}
Then the first conclusion follows from condition~\eqref{generic}.
By a classical result of Kato and Masuda, $\|u(t)\|_2^2\to0$ (see
\cite{Wie87} for a proof) and so
$|\dot{\mathcal{H}}(t)|\to0$ for generic solutions.
In the case $u_0\in L^2_\sigma\cap\dot H^{-1}(\R^2)$,  and \eqref{non-symt} holds,
the application of Lemma~\ref{lem:largeti} and an integration in time in an interval of the
form $[t_0,\infty)$ 
yields the second conclusion.
 
\end{proof}

 Conditions \eqref{generic} and~\eqref{non-symt}
 can be difficult to check for an arbitrarily given $u_0$.
 However, if the size of $u_0\in L^2_\sigma\cap \dot H^{-1}(\R^2)$ is small enough in the $L^2$-norm, 
 then such conditions are both very easily checked, using the Fourier transform, applying the following criterion.

\begin{proposition}
\label{prop:generic}
Let $u_0\in L^2_\sigma\cap \dot H^{-1}(\R^2)$,
be such that $\tilde u_0:=(-\Delta)^{-1/2}u_0$ is non-symmetric (in the sense that 
of condition~\eqref{nonsym} holds with $\tilde  u_0$ instead of $u_0$).
Then there exists $\delta>0$ such that if $\|u_0\|_2<\delta$, then
Leray's solution starting from $u_0$ does satisfy~\eqref{non-symt}.
\end{proposition}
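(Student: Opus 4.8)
The plan is to compare the matrix $M(u):=\int_0^\infty\!\!\int (u\otimes u)(y,s)\,dy\,ds$ — whose failure to be a scalar multiple of the identity is exactly condition~\eqref{non-symt} — with its analogue built from the linear flow $a(t):=e^{t\Delta}u_0$, and to show that the nonlinear correction is \emph{quadratically} small in $\|u_0\|_2$. For the linear flow one has an exact formula: since $u_0\in\dot H^{-1}$ we have $\tilde u_0=(-\Delta)^{-1/2}u_0\in L^2$, and Plancherel together with $\int_0^\infty e^{-2t|\xi|^2}\,dt=(2|\xi|^2)^{-1}$ and Fubini (legitimate since $\int|\xi|^{-2}|\widehat u_0|^2<\infty$) give $M(a)=\int_0^\infty\!\!\int(a\otimes a)(y,s)\,dy\,ds=\tfrac12\int\tilde u_0\otimes\tilde u_0\,dy$. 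By hypothesis $\int\tilde u_0\otimes\tilde u_0$, hence $M(a)$, is not a scalar multiple of the identity. I would measure this by introducing, for a symmetric $2\times2$ matrix $N$, the quantity $\tau(N):=\|\Pi N\|_F$, where $\Pi$ is the orthogonal projection onto traceless symmetric matrices; $\tau$ is a seminorm with kernel $\R I$, it is $1$-Lipschitz for $\|\cdot\|_F$, and $\tau(N)=0$ iff $N\in\R I$. Thus $\rho_0:=\tau(M(a))>0$. (The same computation applied to $u$ gives $\tau(M(u))=\tfrac{\pi}{\sqrt2}\lim_{t\to\infty}L(t)$, which exists here by Lemma~\ref{lem:largeti}, tying the statement to~\eqref{limit:L}.)

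The heart of the argument is the estimate of $U:=u-a$. Since $u_0\in L^2_\sigma\cap\dot H^{-1}$, Lemma~\ref{lem:largeti} gives $u\in L^2(\R^+;L^2(\R^2))$; combined with the energy inequality $2\int_0^\infty\|\nabla u(s)\|_2^2\,ds\le\|u_0\|_2^2$ and the 2D Ladyzhenskaya inequality $\|f\|_{L^4(\R^2)}^2\lesssim\|f\|_2\|\nabla f\|_2$, this yields $\int_0^\infty\|u(s)\|_{L^4}^2\,ds\lesssim\|u\|_{L^2_tL^2_x}\|\nabla u\|_{L^2_tL^2_x}<\infty$. Next, starting from Duhamel's formula $U(t)=-\int_0^t e^{(t-s)\Delta}\P\nabla\cdot(u\otimes u)(s)\,ds$, the symbol bound $|\widehat{\P\nabla\cdot f}(\xi)|\le|\xi|\,|\widehat f(\xi)|$ and Young's inequality in time (using $\|e^{-\,\cdot\,|\xi|^2}\mathbf{1}_{\{t>0\}}\|_{L^2_t}=(2|\xi|^2)^{-1/2}$) give, frequency by frequency, $\|\widehat U(\xi,\cdot)\|_{L^2_t}\lesssim\int_0^\infty|\widehat{u\otimes u}(\xi,s)|\,ds$; Plancherel and Minkowski's integral inequality then upgrade this to $\|U\|_{L^2(\R^+;L^2)}\lesssim\int_0^\infty\|(u\otimes u)(s)\|_{L^2}\,ds=\int_0^\infty\|u(s)\|_{L^4}^2\,ds\lesssim\|u\|_{L^2_tL^2_x}\|\nabla u\|_{L^2_tL^2_x}$. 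Finally, since $\|a\|_{L^2_tL^2_x}=\tfrac1{\sqrt2}\|\tilde u_0\|_2$ (the same computation, taking traces), $\|u\|_{L^2_tL^2_x}\le\|a\|_{L^2_tL^2_x}+\|U\|_{L^2_tL^2_x}$ and $\|\nabla u\|_{L^2_tL^2_x}\le\|u_0\|_2/\sqrt2$, one absorbs $\|U\|_{L^2_tL^2_x}$ on the left as soon as $\|u_0\|_2$ is below a universal constant $\delta_1$, obtaining $\|U\|_{L^2(\R^+;L^2)}\le C\|u_0\|_2\|\tilde u_0\|_2$ with $C$ universal.

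To conclude, expand $u\otimes u-a\otimes a=a\otimes U+U\otimes a+U\otimes U$ and apply Cauchy--Schwarz on $\R^2\times\R^+$: $\|M(u)-M(a)\|_F\le 2\|a\|_{L^2_tL^2_x}\|U\|_{L^2_tL^2_x}+\|U\|_{L^2_tL^2_x}^2\le C'\|u_0\|_2\|\tilde u_0\|_2^2$ for $\|u_0\|_2<\delta_1$. Since $\tau$ is $1$-Lipschitz, $\tau(M(u))\ge\rho_0-C'\|u_0\|_2\|\tilde u_0\|_2^2$, which is positive provided $\|u_0\|_2<\delta:=\min\bigl\{\delta_1,\ \rho_0/(C'\|\tilde u_0\|_2^2)\bigr\}$; for such $u_0$, $M(u)\notin\R I$, i.e.\ \eqref{non-symt} holds. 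The threshold $\delta$ depends on $u_0$ only through $\|\tilde u_0\|_2$ and $\rho_0=\tau(\tfrac12\int\tilde u_0\otimes\tilde u_0)$; in particular, for any fixed profile whose $\tilde u_0$ is non-symmetric, every sufficiently small amplitude $\lambda u_0$ of it satisfies~\eqref{non-symt}, as $\rho_0$ and $\|\tilde u_0\|_2^2$ scale like $\lambda^2$ while $\|\lambda u_0\|_2$ scales like $\lambda$.

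The main obstacle is the quantitative bound of the second paragraph: one needs $U$ to be quadratically small in the \emph{integrated} norm $L^2(\R^+;L^2)$, not merely for each fixed time. Even the bare membership $u\in L^2(\R^+;L^2)$ is non-trivial and is precisely where the hypothesis $u_0\in\dot H^{-1}$ enters, through Lemma~\ref{lem:largeti}; upgrading it to $\|U\|_{L^2_tL^2_x}\lesssim\|u_0\|_2\|\tilde u_0\|_2$ requires the combination of Duhamel's formula, the 2D Ladyzhenskaya inequality and the energy inequality used above. Alternatively one could start from Wiegner's decay estimate $\|U(t)\|_2^2\lesssim(1+t)^{-2}\log^2(e+t)$ recalled in the proof of Lemma~\ref{lem:largeti} and integrate in time, but then one would have to track the quadratic dependence of Wiegner's constant on $\|u_0\|_2$, which is less transparent.
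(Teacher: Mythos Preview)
Your argument is correct and takes a genuinely different route from the paper's.

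The paper sets up a dedicated Banach space $X$ carrying the three norms $\sup_t\|u(t)\|_2$, $\sup_t\sqrt t\,\|u(t)\|_\infty$ and $\bigl(\int_0^\infty\|u(t)\|_2^2\,dt\bigr)^{1/2}$, proves a bilinear estimate $\|B(u,v)\|_X\le K\|u\|_X\|v\|_X$ via H\"older and Young inequalities in Lorentz spaces, and then runs the fixed point in~$X$ under a smallness condition on $\|u_0\|_2+\|u_0\|_{\dot H^{-1}}$; a preliminary rescaling $u_{0,\lambda}=\lambda u_0(\lambda\cdot)$ is used to equalise the $L^2$ and $\dot H^{-1}$ norms so that the smallness of $\|u_0\|_2$ alone suffices. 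The final comparison with the linear flow is then done componentwise on $\int_0^\infty\!\int u_j^2$.

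You instead work directly with the given Leray solution: Lemma~\ref{lem:largeti} provides the qualitative fact $u\in L^2_tL^2_x$, and the quantitative control of $U=u-e^{t\Delta}u_0$ in $L^2_tL^2_x$ comes from Duhamel at fixed frequency combined with the 2D Ladyzhenskaya inequality and the energy identity, then closed by absorption. This avoids the Lorentz-space machinery and the rescaling trick entirely, and packages the non-symmetry via the seminorm $\tau(N)=\|\Pi N\|_F$, which makes the perturbative step a single Lipschitz estimate. The payoff is a more elementary proof with an explicit threshold $\delta=\min\{\delta_1,\rho_0/(C'\|\tilde u_0\|_2^2)\}$ depending only on $\|\tilde u_0\|_2$ and $\rho_0=\tau(\tfrac12\int\tilde u_0\otimes\tilde u_0)$. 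The paper's approach, by contrast, is self-contained in that it simultaneously reconstructs the solution in the space where the estimate is needed, and the $\sup_t\sqrt t\,\|\cdot\|_\infty$ component of~$X$ would be reusable for other purposes; but for the sole aim of proving this proposition your route is shorter.
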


\begin{proof}
Our condition that $\tilde u_0$ is non symmetric can be expressed
by the fact that
\[
\kappa_0:=
\sqrt{
\Bigl(\int \frac{|\widehat u_{0,1}(\xi)|^2-|\widehat u_{0,2}(\xi)|^2}{|\xi|^2}\dd \xi\Bigr)^2 
+ \Bigl(\int 2\frac{\widehat u_{0,1}(\xi)\overline{\widehat u_{0,2}}(\xi)}{|\xi|^2}\dd\xi\Bigr)^2}\not=0.
\]

Let us introduce the Banach space $X$ of measurable functions in $\R^2\times(0,\infty)$ such that
\[
\|u\|_X=\esssup_{t>0}\|u(t)\|_2+\esssup_{t>0}\sqrt t\,\|u(t)\|_\infty
+\Bigl(\int_0^\infty\|u(t)\|_2^2\dd t\Bigr)^{1/2}.
\]

If $u_0\in L^2_\sigma\cap \dot H^{-1}(\R^2)$ then we have by standard heat kernel estimates, and recalling the last line of the proof of Lemma~\ref{lem:largeti},
\[
\|e^{t\Delta}u_0\|_X\le C_0(\|u_0\|_2+\|u_0\|_{\dot H^{-1}}),
\]
where $C_0>0$ is an absolute constant.
To prove the bilinear estimate
\begin{equation}
 \label{bil:V}
 \|B(u,v)\|_X\le K\|u\|_X\|v\|_X,
\end{equation}
with $K$ independent on $u$ and $v$, 
we only have to
establish that
\[
\bigl(\int_0^\infty\|B(u,v)\|_2^2\dd t\Bigr)^{1/2} \le K' \|u\|_X\|v\|_X,
\]
as the other contributions of the $X$-norm of $B(u,v)$ are just standard Kato's estimates.

To establish the latter estimate, first  observe that if $u$ and $v$ belong to $X$, then 
$f:=\|u\|_2\|v\|_2\in L^1\cap L^\infty(\R^+)$, with norm bounded by $\|u\|_X\|v\|_X$.
Then,
\[
\begin{split}
\|B(u,v)\|_2(t) 
&\le \int_0^{t/2}\|F(t-s)\|_2 f(s)\dd s
  + \int_{t/2}^t \|F(t-s)\|_{6/5}\|u(s)\|_3\|v(s)\|_3\dd s\\
&\le Ct^{-1}\int_0^{t/2}f(s)\dd s + C\|u\|_X^{1/3}\|v\|_X^{1/3}\int_{t/2}^t (t-s)^{-2/3} f(s)^{2/3}  
 s^{-1/3}\dd s.  
  \end{split}
\]
We have $t^{-1}\int_0^{t/2}f(s)\dd s\le C(1+t)^{-1}\|u\|_X\|v\|_X$, which indeed 
in $L^2(\R^+)$ as a function of the $t$~variable.

To estimate the second term we will make use of
classical H\"older and Young inequalities for Lorentz spaces and their interpolation properties, see \cite{Lem02}. 
As $f^{2/3}\in L^{3/2}\cap L^\infty(\R^+)$ and  the map $s\mapsto s^{-1/3}$
belong to the weak-$L^{3}(\R^+)$ space, we get that the map $s\mapsto f(s)^{2/3}s^{-1/3}$ belongs the Lorentz space $L^{p,q}(\R^+)$, for all $1<p<3$ and $1\le q\le \infty$.
In particular, this map belongs to $L^{6/5,1}(\R^+)$.
On the other hand, the map $s\mapsto (t-s)^{-2/3}$ belongs to the 
weak-$L^{3/2}(\R^+)$ space and 
$L^{3/2,\infty}*L^{6/5,1}(\R^+)\subset L^{2,1}(\R^+)\subset L^2(\R^+)$ with continous embeddings.
These considerations prove that the last integral is bounded in $L^2(\R^+)$ by
$C\|u\|_X^{2/3}\|v\|_X^{2/3}$.
This in turn implies~\eqref{bil:V}.

Notice, for any $\lambda>0$, 
a rescaled solution $u_\lambda(x,t)=\lambda u(\lambda x,\lambda^2 t)$,
satisfies~\eqref{non-symt} if and only if $u$ does satisfy~\eqref{non-symt}.
Therefore, it is convenient to work with a suitably rescaled datum 
$u_{0,\lambda}=\lambda u_0(\lambda\cdot)$,
in a such way that the smallness assumption $\|u_0\|_2<\delta$
insure that
\[
\|u_{0,\lambda}\|_2+\|u_{0,\lambda}\|_{\dot H^{-1}}< 2\delta<C_0/(4K).
\] 
This is possible taking a large enough $\lambda$, so that
$\|u_{0,\lambda}\|_{\dot H^{-1}}=\lambda^{-1}\|u_0\|_{\dot H^{-1}}=\delta$.

To make the notations lighter in the sequel, we abusively temporary drop
the scaling parameter $\lambda$, and write $u$ instead of $u_\lambda$, 
even though from now on we do work with the rescaled solution.

The global solution $u\in X$ constructed by fixed point 
(that agrees with Leray's solution) satisfy
$u=e^{t\Delta}u_0+B(u,u)$,
with
\[
\|u\|_X\le 2\|u_0\|_X\le 4\delta .
\]
Moreover, for any component of $u$ ($j=1,2$),
\[
u_j^2=(e^{t\Delta}u_{0,j})^2+2e^{t\Delta}u_{0,j}B(u,u)_j+B(u,u)_j^2.
\]
Integrating in space-time we get, for an absolute constant $C>0$,
\[
\begin{split}
\int_0^\infty\!\!\!\int u_j^2
&\ge 
\int_0^\infty\!\!\!\int (e^{s\Delta}u_{0,j})^2
-2\|e^{t\Delta}u_{0}\|_{L^2_{x,t}}\|B(u,u)\|_{L^2_{x,t}}
-\|B(u,u)\|_{L^2_{x,t}}^2\\
&\ge
\int_0^\infty\!\!\!\int (e^{s\Delta}u_{0,j})^2
-C\delta^3-C\delta^4\\
&\ge \int \frac{|\widehat u_0|^2(\xi)}{|\xi|^2}\dd\xi-C\delta^3.
\end{split}
\]
If we now reproduce the same calculation for
$\int_0^\infty\!\!\!\int (u_1^2-u_2^2)$ and for $\int_0^\infty\!\!\! \int 2u_{0,1}u_{0,2}$
we obtain, for another absolute constant $C>0$,
\[
\begin{split}
&\sqrt{\Bigl(\int_0^\infty\!\!\int (u_1^2-u_2^2)\dd y\dd s\Bigr)^2 +\Bigl(\int_0^\infty\!\!\int 2u_1u_2\dd y \dd s\Bigr)^2}\\
&\qquad\ge 
\sqrt{\Bigl(\int \frac{|\widehat u_{0,1}(\xi)|^2-|\widehat u_{0,2}(\xi)|^2}{|\xi|^2}\dd \xi\Bigr)^2 
+\Bigl(\int 2\frac{\widehat u_{0,1}(\xi)\overline{\widehat u_{0,2}}(\xi)}{|\xi|^2}\dd\xi\Bigr)^2}\,\,-C\delta^3\\
&\qquad=
\kappa_0\,\lambda^{-2}-C\delta^3\\
&\qquad=
\kappa_0\,\delta^2\|u_0\|_{\dot H^1}^{-2}-C\delta^3.
\end{split}
\]
The last expression is strictly positive when 
$\delta<\kappa_0/(C\|u_0\|_{\dot H^{-1}}^2)$.
Under this condition and the previous condition $2\delta<C_0/(4K)$, the rescaled solution $u_\lambda$, and hence the non-rescaled solution
$u$ itself, do satisfy~\eqref{non-symt}.
\end{proof}

\begin{remark}
There are examples of (non generic) flows such that $L(t)\equiv0$.
The best known are classical circular flows with radial vorticity, described, \emph{e.g.},  in~\cite{Sch91}. For such flows, $\mathcal{H}(t)$ is not well defined and no hexagonal structure is present. Such flows are somehow trivial, as
the nonlinearity $\P\cdot\nabla(u\otimes u)$ identically vanishes, but very important to describe the large time dynamics of general flows. See \cite{GalW05}.

Following the author (see \cite[Chapt. 25]{Lem02}), we call \emph{symmetric} a 2D flow such that
\begin{itemize}
\item[i)] $(x_1,x_2)\mapsto u_1(x_1,x_2,t)$ is odd with respect to $x_1$ and even with respect to $x_2$.
\item[ii)] $u_1(x_1,x_2,t)=u(x_2,x_1,t)$ for all $x\in \R^2$ and $t\ge0$.
\end{itemize}
Symmetric flows provide another example of non-generic (and non-trivial) solutions 
such that $L(t)\equiv0$.
Let us call ``half-symmetric'' a flow satisfying just one of conditions~i) or ii).
For half-symmetric flows, one in general has $L(t)\not=0$, so that the hexagonal structure $\mathcal{H}(t)$ is present. But
$a-d\equiv0$ or $b\equiv0$: in both cases, one concludes from our previous computations that $\dot{\mathcal{H}}\equiv0$. In other words, for half-symmetric flows the hexagonal structure always remains in a fixed position.

The curious concentration-diffusion effects pointed out in~\cite{Bra09} and also
\cite{FarSY} can be interpretated as follow: there are flows such that $L(t)\not\equiv0$, but such that $L$ has an arbitrarily large number of zeros.
\end{remark}

\section{Proof of Proposition~\ref{prop:dec12} and Proposition~\ref{prop:dec32}}
\label{sec:pro}

The proof of Proposition~\ref{prop:dec12} is carried in two steps. 
In the first one, the solution is proved to belong, for some $T_0>0$ small enough, to a Banach space $X_{p,T_0}$, of functions such that $u$ and $\nabla u$ have a suitable pointwise decay at the spatial infinity, at least for $0<t\le T_0$.
In the second step, the spatial decay for $u$ and $\nabla u$ is proved to persist beyond $T_0$, and to hold also in $[T_0,T]$. This second step is based on an argument of Vigneron~\cite{Vig05}.

Let us consider the  weight functions
\begin{equation}
\phi(x)=(1+|x|)\log(e+|x|)^{1/2}
\quad\text{and}
\quad
\psi(x)=(1+|x|)^{2}\log(e+|x|)^{1/2}.
\end{equation}

For any $T>0$ and  $1\le p\le \infty$, let us set 
\begin{equation*}
\begin{split}
 \|u\|_{X_{p,T}}:=  \esssup_{t\in(0,T)} t^{1/p}\|\phi\, u(t)\|_\infty
    +\esssup_{t\in(0,T)} t^{1/p+1/2}\|\psi\, \nabla u(t)\|_\infty.
  \end{split}
\end{equation*}
So we can define the Banach space $X_{p,T}$ of measurable functions~$u$ 
on $\R^2\times(0,T)$ such that $\|u\|_{X_{p,T}}<\infty$.
%and
%\begin{equation}
%\label{vanis}
% \lim_{T'\to0} \|u\|_{X_{T'}}=0.
%\end{equation}
%and
%\begin{equation}
% \label{autreX}
%\left\{
%\begin{split}
%  &\lim_{|x|\to+\infty} \phi_1(x)\esssup_{t\in(0,T)}|u(x,t)|=0,\\
%  &\lim_{|x|\to+\infty} \phi_2(x)\esssup_{t\in(0,T)}|\nabla u(x,t)|=0.\\
%\end{split}
%\right.
%\end{equation}
We also consider the closed subspace $Y_{p,T}\subset X_{p,T}$ defined as follows:
\[
\begin{split}
Y_{p,T}=\Bigl\{u\in X_{p,T}\colon 
\phi(x)&\esssup_{t\in(0,T)}  t^{1/p}|u(x,t)|\to0 \quad\text{and}\\
& \;\psi(x)\esssup_{t\in(0,T)} t^{1/p+1/2}\,|\nabla u(x,t)|
\to0,\quad\text{as $|x|\to\infty$}\Bigr\}.
\end{split}
\]

\begin{lemma}
\label{lem:bil}
%\item
Let  $B(u,v)(t)=-\int_0^t F(t-s)*(u\otimes v)(s)\dd s$ the bilinear term of the Navier--Stokes equations.
For all $T>0$ and all $4<p\le\infty$, we have the estimate
\begin{equation}
\label{bilest}
 \|B(u,v)\|_{X_{p,T}}\le C_p\,T^{1/2-1/p}(1+\sqrt T)\|u\|_{X_{p,T}}\|v\|_{X_{p,T}},
\end{equation}
where $C_p>0$ depends only on $p$.

Moreover, if $u$ and $v$ belong to $Y_{p,T}$ then $B(u,v)$ does also belong to $Y_{p,T}$.
\end{lemma}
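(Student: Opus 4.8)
The plan is to reduce \eqref{bilest} to two weighted $L^1$-convolution inequalities for the Oseen kernel $F$ and then to perform an elementary time integration; the key device is to work with $F$ itself throughout (via an integration by parts), never with $\nabla F$. By the very definition of $\|\cdot\|_{X_{p,T}}$, every $u,v\in X_{p,T}$ satisfies the pointwise bounds $|u(y,s)|\le s^{-1/p}\|u\|_{X_{p,T}}\phi(y)^{-1}$ and $|\nabla u(y,s)|\le s^{-1/p-1/2}\|u\|_{X_{p,T}}\psi(y)^{-1}$ for a.e.\ $s\in(0,T)$, and likewise for $v$. Hence
\[
|B(u,v)(x,t)|\le\|u\|_{X_{p,T}}\|v\|_{X_{p,T}}\int_0^t s^{-2/p}\Bigl(\int_{\R^2}|F(x-y,t-s)|\,\phi(y)^{-2}\dd y\Bigr)\dd s .
\]
For $\nabla_x B(u,v)$ I would \emph{not} differentiate $F$ (the kernel $\nabla_x F(\cdot,\tau)$ behaves like $\tau^{-2}$, so $\|\nabla_x F(\tau)\|_{L^1}\simeq\tau^{-1}$ is not integrable near $\tau=0$): instead, for fixed $s<t$ one differentiates under $\int\dd y$, integrates by parts in $y$ using $\nabla_x F(x-y,\tau)=-\nabla_y F(x-y,\tau)$ and $\nabla_y(u\otimes v)=(\nabla u)\otimes v+u\otimes(\nabla v)$ — legitimate since $F(\cdot,\tau)$ is smooth with polynomially decaying derivatives and $u,v$ have $L^1_{\rm loc}$ spatial gradients — and then passes the $\dd x$-derivative through the $\dd s$-integral, which is allowed because the resulting integral is absolutely convergent for $p>4$. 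One obtains
\[
|\nabla_x B(u,v)(x,t)|\le 2\|u\|_{X_{p,T}}\|v\|_{X_{p,T}}\int_0^t s^{-2/p-1/2}\Bigl(\int_{\R^2}|F(x-y,t-s)|\,\psi(y)^{-1}\phi(y)^{-1}\dd y\Bigr)\dd s .
\]
So it suffices to prove, uniformly in $x\in\R^2$ and $\tau>0$,
\begin{align*}
\text{(I)}\quad & \phi(x)\int_{\R^2}|F(x-y,\tau)|\,\phi(y)^{-2}\dd y\le C(1+\tau^{-1/2}),\\
\text{(II)}\quad & \psi(x)\int_{\R^2}|F(x-y,\tau)|\,\psi(y)^{-1}\phi(y)^{-1}\dd y\le C(1+\tau^{-1/2}).
\end{align*}

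To prove (I)--(II) the only facts used are the scaling relation~\eqref{scaF} and the bound~\eqref{podeF}, which together give $|F(z,\tau)|\le C\min\{|z|^{-3},\tau^{-3/2}\}$, whence $\|F(\tau)\|_{L^1}\le C\tau^{-1/2}$; note also that the inner weight in (II), namely $(1+|y|)^{-3}(\log(e+|y|))^{-1}$, is integrable on $\R^2$, while that in (I), $(1+|y|)^{-2}(\log(e+|y|))^{-1}$, is only logarithmically non-integrable, with $\int_{|y|\le R}\phi(y)^{-2}\dd y\lesssim\log\log(e+R)$. For $|x|\le2$ one estimates crudely: the inner weights are $\le1$, the outer weights are $\le C$, and the integral is $\le\|F(\tau)\|_{L^1}\le C\tau^{-1/2}$. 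For $|x|\ge2$ one splits $\R^2$ into $\{|y|\le|x|/2\}$ and $\{|y|>|x|/2\}$. On the first set $|x-y|\ge|x|/2$, so $|F(x-y,\tau)|\le C|x|^{-3}$, and multiplying by the outer weight and integrating the inner weight, the contribution is $\lesssim\phi(x)|x|^{-3}\log\log(e+|x|)\lesssim|x|^{-2}(\log|x|)^{1/2}\log\log|x|$ in case (I), resp.\ $\lesssim\psi(x)|x|^{-3}\lesssim|x|^{-1}(\log|x|)^{1/2}$ in case (II), both bounded. On the second set $\phi(y)\ge\phi(|x|/2)$ and $\psi(y)\ge\psi(|x|/2)$, so (after an elementary comparison of $\phi,\psi$ at $x$ and at $|x|/2$) the product of the outer weight and the inner weight is $\le C$, and the contribution is $\le C\int_{|y|>|x|/2}|F(x-y,\tau)|\dd y\le C\|F(\tau)\|_{L^1}\le C\tau^{-1/2}$. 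This proves (I)--(II).

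Inserting (I) and using $\int_0^t s^{-2/p}\dd s=(1-2/p)^{-1}t^{1-2/p}$ together with $\int_0^t s^{-2/p}(t-s)^{-1/2}\dd s=t^{1/2-2/p}\int_0^1\sigma^{-2/p}(1-\sigma)^{-1/2}\dd\sigma$ (both finite since $p>2$), one gets $\phi(x)|B(u,v)(x,t)|\le C_p(t^{1-2/p}+t^{1/2-2/p})\|u\|_{X_{p,T}}\|v\|_{X_{p,T}}$; multiplying by $t^{1/p}$ gives $t^{1/p}\|\phi\,B(u,v)(t)\|_\infty\le C_p\,t^{1/2-1/p}(1+\sqrt t)\,\|u\|_{X_{p,T}}\|v\|_{X_{p,T}}$. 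The identical computation with (II), with the exponent $-2/p$ replaced by $-2/p-1/2$ (the integrals $\int_0^t s^{-2/p-1/2}\dd s$ and $\int_0^t s^{-2/p-1/2}(t-s)^{-1/2}\dd s$ being finite precisely for $p>4$), bounds $t^{1/p+1/2}\|\psi\,\nabla B(u,v)(t)\|_\infty$ by the same quantity, and taking the supremum over $t\in(0,T)$ yields~\eqref{bilest}, since $t^{1/2-1/p}(1+\sqrt t)\le T^{1/2-1/p}(1+\sqrt T)$. For the last assertion, assume $u,v\in Y_{p,T}$ and split the $y$-integral at $|y|=|x|/2$: on $\{|y|>|x|/2\}$ the factors $\phi(y)|u(y,s)|$, $\phi(y)|v(y,s)|$ (resp.\ with $\psi,\nabla$) carry a multiplicative quantity $\to0$ as $|x|\to\infty$, by definition of $Y_{p,T}$; on $\{|y|\le|x|/2\}$ one has $|x-y|\ge|x|/2\to\infty$, so $|F(x-y,t-s)|\le C|x-y|^{-3}\le C|x|^{-3}$ and the far-region estimate above already gives a bound $\to0$. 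Hence $\phi(x)\esssup_t t^{1/p}|B(u,v)(x,t)|\to0$ and $\psi(x)\esssup_t t^{1/p+1/2}|\nabla B(u,v)(x,t)|\to0$, i.e.\ $B(u,v)\in Y_{p,T}$.

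The heart of the matter is (I)--(II): one must verify that the $L^1$-singularity $\|F(\tau)\|_{L^1}\simeq\tau^{-1/2}$ produced by the near-diagonal region is reproduced \emph{without loss} once the weights $\phi,\psi$ are carried through — a worsening to $\tau^{-1}$, which a direct estimate of $\nabla F$ would force, would make the time integral for $\nabla B$ divergent — and that neither the region decomposition, nor the genuinely non-integrable weight $\phi^{-2}$ with its $\log\log$ factor, nor the behaviour near $x=0$ produces an unbounded contribution. Replacing the differentiation of $F$ by an integration by parts is exactly what reconciles these constraints, at the price of the hypothesis $p>4$.
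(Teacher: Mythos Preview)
Your proof is correct and follows essentially the same approach as the paper: both avoid differentiating $F$ by moving the gradient onto $u\otimes v$ (this is where $p>4$ enters), split the convolution at $|y|=|x|/2$, use $|F(z,\tau)|\le C|z|^{-3}$ on the near region and $\|F(\tau)\|_{L^1}\le C\tau^{-1/2}$ on the far region, and handle the $Y_{p,T}$ stability by the same splitting. Your organization---isolating the weighted $L^1$ convolution bounds (I)--(II) before doing the time integration---is a clean repackaging of the same computation the paper carries out with the spatial and temporal estimates interleaved.
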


\begin{proof}
First of all, we have
\[
\|B(u,v)(t)\|_\infty\le \int_0^t \|F(t-s)\|_1\|u(s)\|_\infty\|v(s)\|_\infty\dd s
\le C_p t^{1-2/p}\|u\|_{X_{p,T}}\|v\|_{X_{p,T}}.
\]
Moreover, for $|x|\ge 2e$,
\[
\begin{split}
|B(u,v)|(x,t)
&\le C|x|^{-3}\int_0^t\int_{|y|\le |x|/2}|u|\,|v|(y,s)\dd y\dd s\\
  &\qquad\qquad+\int_0^t\int_{|y|\ge |x|/2} |F(x-y,t-s)| s^{-2/p}\dd y\dd s \,\,\phi(x)^{-2} \|u\|_{X_{p,T}}\|v\|_{X_{p,T}}\\
&\le C_p\Bigl(t^{1-2/p}|x|^{-3}\log(\log|x|))  +\int_0^t\|F(t-s)\|_1s^{-2/p}\dd s\,\,\phi(x)^{-2}\Bigr)
  \|u\|_{X_{p,T}}\|v\|_{X_{p,T}}\\
&\le C_p(t^{1/2-2/p})(1+\sqrt t)\phi(x)^{-1}\|u\|_{X_{p,T}}\|v\|_{X_{p,T}}\\
\end{split}
\]
Similarly,
\[
\|\nabla B(u,v)(t)\|_\infty\le \int_0^t \|F(t-s)\|_1\|\nabla (u\otimes v)(s)\|_\infty \dd s
\le C_p\, t^{-2/p}\|u\|_{X_{p,T}}\|v\|_{X_{p,T}}.
\]
And, for $|x|\ge e$,
\[
\begin{split}
|\nabla B(u,v)|(x,t)
&\le C|x|^{-3}\int_0^t\int_{|y|\le |x|/2} |\nabla(u\otimes v)|(y,s)\dd y\dd s\\
  &\qquad+C\int_0^t\int_{|y|\ge |x|/2} |F(x-y,t-s)| s^{-1/2-2/p}\dd y\dd s \,\,\phi(x)^{-1}\psi(x)^{-1}
   \|u\|_{X_{p,T}}\|v\|_{X_{p,T}}\\
&\le C_p
 \Bigl(t^{1/2-2/p}|x|^{-3}  +\int_0^t\|F(t-s)\|_1s^{-1/2-2/p}\dd s\,\, \phi(x)^{-1}\psi(x)^{-1}\Bigr)\|u\|_{X_{p,T}}\|v\|_{X_{p,T}}\\
&\le C_p(t^{1/2-2/p}+t^{-2/p})\psi(x)^{-1}\|u\|_{X_{p,T}}\|v\|_{X_{p,T}}\\
\end{split}
\]
Combining the four previous estimates implies~\eqref{bilest}.

If $u$ and $v$ belong to the closed subset $Y_{p,T}$, then going back to the previous estimates one readily see that $B(u,v)\in Y_{p,T}$.

\end{proof}

\begin{lemma}
\label{lem:deli}
Let $1\le p\le \infty$ and $u_0\in L^p(\R^2)$, be such 
$u_0(x)=o(\phi(x)^{-1})$ and $\nabla u_0(x)=o(\psi(x)^{-1})$ as $|x|\to+\infty$.
Then, for all $T>0$, $e^{t\Delta}u_0\in Y_{p,T}$.
\end{lemma}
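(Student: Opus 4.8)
The plan is to bound $e^{t\Delta}u_0(x)=(g_t*u_0)(x)$ and $\nabla e^{t\Delta}u_0(x)=(g_t*\nabla u_0)(x)$ pointwise for large $|x|$, by splitting each convolution into a \emph{far} part over $\{|y|\ge|x|/2\}$ and a \emph{near} part over $\{|y|\le|x|/2\}$. Two preliminary remarks reduce the statement to the decay conditions defining $Y_{p,T}$: first, $u_0=o(\phi^{-1})$ forces $u_0\in L^\infty(\R^2)$, so $e^{t\Delta}u_0$ is well defined, $\nabla e^{t\Delta}u_0=e^{t\Delta}\nabla u_0$, and one has the crude bounds $t^{1/p}\|e^{t\Delta}u_0(t)\|_\infty\le C_p\|u_0\|_p$ and $t^{1/p+1/2}\|\nabla e^{t\Delta}u_0(t)\|_\infty\le C_p\,t^{1/p}\|u_0\|_\infty$, both bounded on $(0,T)$; since $\phi$ and $\psi$ are bounded on bounded sets, this already gives $e^{t\Delta}u_0\in X_{p,T}$ once the decay at infinity is established.

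For the far parts I would use $\|g_t\|_1=1$ and the decay hypotheses. As $\phi$ is increasing with $\phi(x)\le C\phi(|x|/2)$ for large $|x|$,
\[
\Bigl|\int_{|y|\ge|x|/2}g_t(x-y)u_0(y)\dd y\Bigr|
\le\Bigl(\sup_{|y|\ge|x|/2}\phi(y)|u_0(y)|\Bigr)\phi(|x|/2)^{-1}\|g_t\|_1
\le C\,\eta_0(x)\,\phi(x)^{-1},
\]
with $\eta_0(x):=\sup_{|y|\ge|x|/2}\phi(y)|u_0(y)|\to0$; this is uniform in $t\in(0,T)$, so multiplying by $t^{1/p}$ gives $o(\phi(x)^{-1})$. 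The same argument with $\psi$, with $\eta_1(x):=\sup_{|y|\ge|x|/2}\psi(y)|\nabla u_0(y)|\to0$, and with the factor $t^{1/p+1/2}$, handles $\int_{|y|\ge|x|/2}g_t(x-y)\nabla u_0(y)\dd y$ and produces $o(\psi(x)^{-1})$.

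For the near parts, on $\{|y|\le|x|/2\}$ we have $|x-y|\ge|x|/2$, and for every $N\in\N$ the Gaussian satisfies $g_t(z)\le C_N\,t^{N-1}|z|^{-2N}$. Together with $|u_0(y)|\le C(1+|y|)^{-1}$, so that $\int_{|y|\le|x|/2}|u_0(y)|\dd y\le C|x|$, this yields
\[
\Bigl|\int_{|y|\le|x|/2}g_t(x-y)u_0(y)\dd y\Bigr|\le C_N\,t^{N-1}|x|^{1-2N};
\]
since $N-1+1/p\ge0$, the map $t\mapsto t^{N-1+1/p}$ is nondecreasing, so the supremum over $t\in(0,T)$ is at most $C_{N,T}|x|^{1-2N}$, which for $N\ge2$ is $o(|x|^{-1}(\log(e+|x|))^{-1/2})=o(\phi(x)^{-1})$. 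For the gradient, $|\nabla u_0(y)|\le C(1+|y|)^{-2}$ gives $\int_{|y|\le|x|/2}|\nabla u_0(y)|\dd y\le C\log(e+|x|)$, hence $\sup_{t\in(0,T)}t^{1/p+1/2}\bigl|\int_{|y|\le|x|/2}g_t(x-y)\nabla u_0(y)\dd y\bigr|\le C_{N,T}|x|^{-2N}\log(e+|x|)=o(\psi(x)^{-1})$ for $N\ge2$. Adding near and far contributions gives both decay conditions, hence $e^{t\Delta}u_0\in Y_{p,T}$.

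The one genuinely delicate point is the uniformity in $t$ of the near-part estimate as $t\to0^+$: a bare $\sup_{t>0}$ of the Gaussian prefactor diverges when $p<\infty$, and what rescues us is that $t$ stays in the bounded interval $(0,T)$, so the tail $e^{-|x|^2/(16t)}$ is super-polynomially small in $|x|$ uniformly on $(0,T)$; encoding this through $g_t(z)\le C_N t^{N-1}|z|^{-2N}$ with $N$ large makes the bookkeeping automatic. A minor caveat is that $\nabla e^{t\Delta}u_0=e^{t\Delta}\nabla u_0$ presupposes $\nabla u_0\in L^1_{\rm loc}(\R^2)$; to avoid this one may instead keep $\nabla e^{t\Delta}u_0=(\nabla g_t)*u_0$ in the near part and integrate by parts only in the far part, the resulting boundary integral over $\{|y|=|x|/2\}$ being again super-polynomially small and absorbed into the $\eta_0(x)\phi(x)^{-1}$ bound.
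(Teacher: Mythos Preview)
Your approach is essentially the paper's: split the convolution at $|y|=|x|/2$, use the Gaussian tail on the near part and $\|g_t\|_1=1$ on the far part, and handle the gradient by putting the derivative on $u_0$ away from the origin. One slip, however: the assertion ``$u_0=o(\phi^{-1})$ forces $u_0\in L^\infty(\R^2)$'' is false as stated, since the decay hypothesis is only asymptotic and on a fixed ball you merely have $u_0\in L^p$. This matters in two places. First, your crude bound $t^{1/p+1/2}\|\nabla e^{t\Delta}u_0\|_\infty\le C_p t^{1/p}\|u_0\|_\infty$ may be vacuous; replace it (as the paper does) by the standard $L^p\to L^\infty$ smoothing $t^{1/p+1/2}\|\nabla e^{t\Delta}u_0\|_\infty\le C_p\|u_0\|_p$. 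Second, the global pointwise bounds $|u_0(y)|\le C(1+|y|)^{-1}$ and $|\nabla u_0(y)|\le C(1+|y|)^{-2}$ are not available near the origin; for $u_0$ you can rescue the near-part integral via H\"older, $\int_{|y|\le|x|/2}|u_0|\le C\|u_0\|_p|x|^{2/p'}$, which still gives polynomial growth and is crushed by $|x|^{-2N}$.

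For $\nabla u_0$ near the origin you correctly flag the issue and propose integrating by parts only on the far region. That fix works (the boundary integral on $\{|y|=|x|/2\}$ inherits the super-polynomial Gaussian decay), though note it must be $o(\psi(x)^{-1})$, not $o(\phi(x)^{-1})$; the paper achieves the same end more cleanly with a smooth cut-off $\chi$, writing $\nabla e^{t\Delta}u_0=(\nabla g_t)*(\chi u_0)+g_t*\nabla[(1-\chi)u_0]$, which avoids boundary terms altogether.
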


\begin{proof}
From $u_0\in L^p(\R^2)$, the usual heat kernel estimates gives
\begin{equation}
\label{heato}
\begin{split}
 &\sup_{t>0} t^{1/p}\|e^{t\Delta}u_0\|_\infty+
  \sup_{t>0} t^{1/p+1/2}\|\nabla e^{t\Delta}u_0\|_\infty
  <\infty.\\
\end{split}
\end{equation}
Moreover, there exists $R_0>1$ such that, for all $|x|\ge R_0$ we have 
$|u_0(x)|\le \phi(x)^{-1}$ and 
$|\nabla u_0(x)|\le \psi(x)^{-1}$.
The spatial decay estimates as $|x|\to\infty$ are simple:
splitting the heat integral $\int g_t(x-y)u_0(y)\dd y$ at $y=|x|/2$
and using $g_t(x/2)\int_{|y|\le|x|/2}|u_0(y)|\dd y\le C|x|^{-4}t\,|x|^{2(1-1/p)}$
, one obtains
for all $|x|\ge 2R_0$,
\[
\begin{split}
 |e^{t\Delta}u_0(x)|
 &\le C(t |x|^{-2}+\phi(x)^{-1})\le C(1+t)\phi(x)^{-1}.
\end{split}
\]
with $C>0$ independent on $t$.
Hence,
\[
\sup_{t\in(0,T)} t^{1/p}\|\phi\, e^{t\Delta}u_0\|_\infty\le C(1+T^{1+1/p}).
\]

Next, let $\chi$ be a cut-off function equal to $1$ for $|x|\le R_0$, vanishing for $|x|\ge 2R_0$.
We have $\nabla e^{t\Delta}u_0=(\nabla g_t)*(\chi u_0)+g_t*\nabla[(1-\chi)u_0]$.
Hence, for $|x|\ge4R_0$,
\[
\begin{split}
 |\nabla e^{t\Delta}u_0|(x)
 &\le C|x|^{-3}\int_{|y|\le 2R_0}|u_0|+
 g_t(x/2)\int_{R_0\le |y|\le |x|/2}|\nabla [(1-\chi)u_0]|
 +\esssup_{|y|\ge |x|/2} |\nabla u_0|(y)\\
 &\le C(1+\sqrt t)\psi(x)^{-1}.
\end{split}
\]
Therefore,
\[
\sup_{t\in(0,T)} t^{1/p+1/2}\,\|\psi\, \nabla e^{t\Delta}u_0\|_\infty\le C(1+T^{1+1/p}).
\]
In fact, using that $u_0(x)=o(\phi(x)^{-1})$ and $\nabla u_0(x)=o(\psi(x)^{-1})$ as $|x|\to\infty$, allow us to reinforce previous conclusion into 
$\phi_1(x)\sup_{t\in(0,T)}|e^{t\Delta}u_0|(x)\to0$
and  $\phi_2(x)\sup_{t\in(0,T)}|e^{t\Delta}u_0|(x)\to0$, getting $e^{t\Delta}u_0\in Y_{p,T}$.

\end{proof}

\begin{proof}[Proof of Proposition~\ref{prop:dec12}]\hfill\mbox{}
We first make use of  the spatial decay assumption for $u_0, \nabla u_0$, of the condition $u_0\in L^p(\R^2)$, with $4<p\le\infty$ and the divergence-free condition
on $u_0$. 
Observe that norm of the bilinear operator
of $B\colon X_{p,T}\times X_{p,T}\to X_{p,T}$ goes to zero as $T\to0$, as we checked in establishing~\eqref{bilest}.
If we choose $T_0>0$ small enough, then applying the the standard fixed point argument in $Y_{p,T_0}$ we get from the two previous Lemmas the existence of a local-in-time solution $u\in Y_{p,T_0}$ of the Navier--Stokes equations, written in its integral form, $u=e^{t\Delta}u_0+B(u,u)$.
This solution is obtained as the limit $u=\lim_{k\to\infty}u_k$ in the 
$X_{p,T_0}$-norm, where, accordingly with the usual iteration scheme, 
$u_1:=e^{t\Delta}u_0$ and
$u_{k+1}=u_1+B(u_k,u_k)$ for $k=1,2,\ldots$

In fact, $u_0$ does also belong to $L^2_\sigma(\R^2)$, and the above iteration scheme is known to converge also in $L^4([0,T_0],\dot H^{1/2}(\R^2))$ by classical Fujita and Kato's result. Therefore the solution $u$ agrees with Leray's solution in such time interval. (See \cite{BahCD11}).
%
%
%In fact, such solution belongs more precisely to the closed subset $Y_{T_0}$,
%because this is true for $u_1:=e^{t\Delta}u_0$ and, by induction, 
%for $u_{k+1}=u_1+B(u_k,u_k)$ $(k=1,2,\ldots$). We thus get $u=\lim_{k\to+\infty} u_k\in Y_{T_0}$, where the limit is taken in the $X_{T_0}$-norm.
% 
%This solution is also in $C([0,T_0],L^2(\R^2))$.
%Therefore, it agrees on $[0,T_0]$ with Leray's solution.
But Leray's solution is defined beyond $T_0$ and is such that,
for all $T>T_0$,
\begin{equation}
\label{katin}
\sup_{t\in [T_0,T]} \|u(t)\|_\infty< \infty.
\end{equation}
See, \emph{e.g.}, \cites{SawT07, Zel13} for fine $L^\infty$-estimates of 2D Navier-Stokes flows
valid also in the more general settings of infinite energy solutions.
We now work on $[T_0,T]$, where $T>T_0$ is arbitrary.
It will be convenient to consider the new initial datum
\[
\tilde u_0(x)=u(x,T_0).
\]
From the fact that $u\in Y_{p,T_0}$ we infer that
\begin{equation}
\label{dectil}
\begin{aligned}
&\phi\,\tilde u_0\in L^\infty(\R^2)\\
&\psi\,\nabla \tilde u_0\in L^\infty(\R^2)
\end{aligned}
\qquad
\text{and}
\qquad
\begin{aligned}
&\phi(x)\tilde u_0(x)\to0\\
&\psi\nabla \tilde u_0(x)\to0
\end{aligned}
\qquad\text{as $|x|\to\infty$}.
\end{equation}

We now argue as in Vigneron's paper~\cite{Vig05} to
deduce, from~\eqref{katin} and~\eqref{dectil},
that the spatial decay of $\tilde u_0$ and $\nabla \tilde u_0$ is preserved by the
flow, in the whole interval $[T_0,T]$.
To this purpose, let us introduce, for $a>0$,
\[
\varphi_a(x)=(1+|x|)^{a}.
\]
Observe that $\varphi_a$ is submultiplicative for all $a>0$, hence
$\varphi_a(x)\le \varphi_a(x-y)\varphi_a(y)$.
For the moment, we take
\[
a=2/3.
\]
Next we will improve the decay rates for $u$ by bootstrapping.
First of all, we have, for all $T_0\le s<t\le T$,
\begin{equation}
\label{nss}
u(t)=e^{(t-s)\Delta} u(\cdot,s)-\int_{s}^tF(t-\tau)*(u\otimes u)(\tau)\dd \tau.
\end{equation}
By the scaling relations and the decay of the kernel $F$ 
we have, for any $T>0$,
\begin{equation*}
\begin{cases}
\sup_{t\in [0,T]} t^{1/2} \| \varphi_a F(\cdot,t)\|_1<\infty\\
\sup_{t\in [0,T]} t \|\varphi_a\nabla F(\cdot,t)\|_1<\infty,
\end{cases}
\end{equation*}
(we cannot take here $a=1$ because $\|\varphi_1F(t)\|_1=\infty$. On the other hand,
any choice of $a\in(0,1)$ would do).
The following linear estimates hold:
\begin{equation}
\label{linper}
\sup_{t\in[0,T]} \|\varphi_a\,e^{t\Delta}v\|_\infty
+\sup_{t\in[0,T]} \|\varphi_a\,\nabla e^{t\Delta}v\|_\infty
\le A(\|\varphi_a v\|_\infty+\|\varphi_a\nabla v\|_\infty),
\end{equation}
with $A=A(T)$.
We have, for all  $T_0\le \tau\le t\le T$, and for some constant $B=B(T)$ independent on $\tau$ and $t$, and $T_0$,
\begin{equation}
\label{nonlinper}
\begin{split}
|\varphi_a(x) F(t-\tau)*(u\otimes u)(x,\tau)|
&\le \int \varphi_a(x-y) | F(x-y,t-\tau)| \,\varphi_a(y)\,|u(y,\tau)| \,|u(y,\tau)|\dd y\\
&\le \|\varphi_a F(t-\tau)\|_1\|\varphi_a u(\tau)\|_\infty\|u(\tau)\|_\infty\\
&\le B(t-\tau)^{-1/2}\|\varphi_a u(\tau)\|_\infty\|u(\tau)\|_\infty .
\end{split} 
\end{equation}
In the same way,
\begin{equation}
\label{nonlinper2}
\begin{split}
\Bigl|\varphi_a(x)\nabla [F(t-\tau)*(u\otimes u)](x,\tau)\Bigr|
&\le \int  \varphi_a(x-y) \,| F(x-y,t-\tau)|\,\varphi_a(y)\,|\nabla u(y,\tau)|\,|u(y,\tau)|\dd y\\
&\le \|\varphi_a F(t-\tau)\|_1\|\varphi_a\nabla u(\tau)\|_\infty\|u(\tau)\|_\infty\\
&\le B(t-\tau)^{-1/2}\|\varphi_a\nabla u(\tau)\|_\infty\|u(\tau)\|_\infty .
\end{split} 
\end{equation}
Combining the two latter estimates with~\eqref{linper}, we get from equation~\eqref{nss},
for all $T_0\le s\le t\le T$,
\begin{equation}
\label{soppo}
\begin{split}
\|\varphi_au(t)&\|_\infty+\|\varphi_a\nabla u(t)\|_\infty\\
&\le A(\|\varphi_a u(s)\|_\infty+\|\varphi_a\nabla u(s)\|_\infty)\\
&\qquad+ 2B(t-s)^{1/2}\sup_{\tau\in[s,t]}\|u(\tau)\|_\infty \, 
 \Bigl(
 \sup_{\tau\in[s,t]}\|\varphi_a u(\tau)\|_\infty+\sup_{\tau\in[s,t]}\|\varphi_a \nabla u(\tau)\|_\infty\Bigr).
\end{split}
\end{equation}

We may assume $u\not\equiv0$ on $[T_0,T]$. Starting with $T_0$, we construct a strictly increasing sequence of times $(T_i)_{i\ge0}$ such that, for $i\ge0$,
\[
2B (T_{i+1}-T_i)^{1/2}\sup_{\tau\in [T_0,T]}\|u(\tau)\|_\infty=1/2.
\]
Let $N\in\N$ be such that $T_N\le T<T_{N+1}$.
We thus have
\[
N\le (T-T_0)(4B\sup_{\tau\in [T_0,T]}\|u(\tau)\|_\infty)^2< N+1.
\]
For $i\le N$, consider the interval $\Delta_i=[T_0,T]\cap[T_i,T_{i+1}]$,
and set 
\[
M_i=\sup_{t\in\Delta_i}\|\varphi_a u(\tau)\|_\infty
+\sup_{t\in\Delta_i}\|\varphi_a \nabla u(\tau)\|_\infty.
\]
Applying \eqref{soppo} with $s=T_0$ and $T_0\le t\le T_1$
we get
\begin{equation}
\label{estiM}
M_0\le 2A(\|\varphi_{a}\tilde u_0\|_\infty+\|\varphi_{a}\nabla \tilde u_0\|_\infty).
\end{equation}
In the same way, working on $\Delta_i$, for $1\le i\le N$ we get
\begin{equation}
\label{estiM2}
M_i\le 2AM_{i-1}.
\end{equation}
Therefore,
\begin{equation}
\label{metti}
\sup_{t\in[T_0,T]}\|\varphi_a u(t)\|_\infty
+ \sup_{t\in[T_0,T]}\|\varphi_a \nabla u(t)\|_\infty
\le (2A)^{N+1}(\|\varphi_{a}\tilde u_0\|_\infty+\|\varphi_{a}\nabla \tilde u_0\|_\infty).
\end{equation}
But
\[
(2A)^{N+1}= (2A)\exp(N\log(2A))
\le A_1\exp\bigl(A_2\sup_{t\in [T_0,T]}\|u(\tau)\|_\infty^2\bigr),
\]
where $A_1$ and $A_2$ depend only on~$T$, and are locally bounded functions of~$T$.
We then conclude that
\begin{equation}
\label{mettip}
\sup_{t\in[T_0,T]}\|\varphi_a u(t)\|_\infty
+ \sup_{t\in[T_0,T]}\|\varphi_a \nabla u(t)\|_\infty
\le 
A_1\exp\bigl(A_2\sup_{t\in [T_0,T]}\|u(\tau)\|_\infty^2\bigr)
(\|\varphi_{a}\tilde u_0\|_\infty+\|\varphi_{a}\nabla \tilde u_0\|_\infty).
\end{equation}

We now finish the proof with some bootstrapping on the spatial decay rate.
From the above estimates we get the provisory spatial decay
\[
\sup_{t\in[T_0,T]}|u(x,t)|+\sup_{t\in[T_0,T]}|\nabla u(x,t)|\le C(\varphi_{2/3}(x))^{-1},
\]
where $C$ depends only on $T$ and on the initial data $u_0$.
But
\[
u(x,t)=e^{(t-T_0)\Delta}\tilde u_0(x)-\int_{T_0}^t F(t-s)*(u\otimes u)(x,s)\dd s.
\]
For the quadratic term we easily get from our provisory estimate
\[
\sup_{t\in[T_0,T]}\int_{T_0}^t |F(t-s)|*|u\otimes u|(x,s)\dd s\le C(\varphi_{4/3}(x))^{-1}=o(\phi(x)^{-1})
\qquad\text{as $|x|\to+\infty$}.
\]
For the linear term, recalling~\eqref{dectil} we have
$\phi_1\,e^{(t-T_0)\Delta}\tilde u_0 \in L^\infty(\R^2)$ and 
  $\sup_{t\in[0,T]}  |e^{(t-T_0)\Delta}\tilde u_0|(x)=o(\phi(x)^{-1})$
 as $|x|\to+\infty$.
Hence,
\[
\phi\sup_{t\in[T_0,T]} |u(\cdot,t)| \in L^\infty(\R^2)\qquad\text{and}\qquad
\phi(x)\sup_{t\in[T_0,T]}|u(x,t)|\to0, \qquad\text{as $|x|\to\infty$}.
\]  
For the gradient estimates, after a similar bootstrapping procedure (but with a few more
iterations) we get
\[
\psi\sup_{t\in[T_0,T]} |\nabla u(\cdot,t)| \in L^\infty(\R^2)\qquad\text{and}\qquad
\psi(x)\sup_{t\in[T_0,T]}|\nabla u(x,t)|\to0, \qquad\text{as $|x|\to\infty$}.
\] 

This concludes the proof of Proposition~\ref{prop:dec12}. 
\end{proof}

Before proving Proposition~\ref{prop:dec32}, we set
$\varphi:=\varphi_{3/2}$, \emph{i.e.\/}, according to our previous notation,
\[
\varphi(x)=(1+|x|)^{3/2}.
\]
Moreover, for $p>2$ we set
\[
\|u\|_{Z_{p,T}}:=
\esssup_{t\in(0,T)}t^{1/4}\|u(t)\|_4+\esssup_{t\in(0,T)} t^{1/p}\|\varphi\, u\|_\infty.
\]
We denote by $Z_{p,T}$ the Banach space of measurable functions $u$ on $\R^2\times(0,T)$
such that $\|u\|_{Z_{p,T}}<\infty$ and by $W_{p,T}$ the closed subspace of~$Z_{p,T}$,
\[
W_{p,T}=\{u\in Z_{p,T}\colon \lim_{|x|\to+\infty} \varphi(x)\esssup_{t\in(0,T)}
t^{1/p}|u(x,t)|=0\}.
\]
We equip $W_T$ with the $Z_T$-norm.

\begin{proof}[Proof of~Proposition~\ref{prop:dec32} ]
The only important change with respect to the proof of~Proposition~\ref{prop:dec32}, 
is the following bilinear estimate, that conveniently replaces
Lemma~\ref{lem:bil}
\[
\|B(u,v)\|_{Z_{p,T}}\le CT^{1/2-1/p}(1+\sqrt T)\|u\|_{Z_{p,T}}\|v\|_{Z_{p,T}},
\]
valid for all $T>0$ and $p>2$, where $C>0$ depends only on~$p$.

This is elementary:
first of all,
\[
\|B(u,v)(t)\|_4\le C\int_0^t\|F(t-s)\|_{1}\|u(s)\|_4\|v(s)\|_\infty\dd s
\le Ct^{1/4-1/p}\|u\|_{Z_{p,T}}\|v\|_{Z_{p,T}}.
\]
On the other hand,
\[
\|B(u,v)(t)\|_\infty\le C\int_0^t\|F(t-s)\|_{1}\|u(s)\|_\infty\|v(s)\|_\infty\dd s
\le Ct^{1/2-2/p}\|u\|_{Z_{p,T}}\|v\|_{Z_{p,T}}.
\]
Moreover, for $|x|\ge 1$, after splitting as usual the integrals defining
$B(u,v)(x,t)$ in the regions $\{|y|\le |x|/2\}$ and $\{|y|\ge |x|/2\}$,
we obtain
\[
\begin{split}
|B(u,v)|(x,t)
&\le C|x|^{-3}\int_0^t\int_{|y|\le |x|/2}|u|\,|v|(y,s)\dd y\dd s
 + C\|u\|_{Z_{p,T}}\|v\|_{Z_{p,T}}\int_0^t\|F(t-s)\|_1 s^{-2/p}\dd s\,\,\varphi(x)^{-2}\\
&\le C\Bigl(t^{1-2/p}|x|^{-3}+t^{1/2-2/p}\varphi(x)^{-2}\Bigr)\|u\|_{Z_{p,T}}\|v\|_{Z_{p,T}}\\
&\le C(1+\sqrt t)t^{1/2-2/p}\varphi(x)^{-1} \|u\|_{Z_{p,T}}\|v\|_{Z_{p,T}}.
\end{split}
\]
This establishes the required bilinear estimate.

Notice that if the two functions (or at least one of them) $u$ and $v$ belong more precisely to $W_{p,T}$, then the last estimate ensures that $B(u,v)\in W_{p,T}$.

On the other hand, if $u_0\in L^2_\sigma(\R^2)$ and $u_0(x)=o(|x|^{-3/2})$ as 
$|x|\to+\infty$, then one easily checks via standard heat kernel estimates that
$e^{t\Delta}u_0\in W_{p,T}$, for all $T>0$.
Therefore, choosing a small enough $T_0>0$ the usual fixed point argument
applies in $W_{p,T_0}$. Hence, we get the existence of a solution
$u\in W_{p,T_0}$. This solution agrees with Leray's solution on $[0,T_0]$. By a continuation argument, similar to the one we did in Proposition~\ref{prop:dec12}, we  finally conclude that $u\in W_{p,T}$, for all $T>0$.

\end{proof}

%
%
%The results of Proposition~\ref{prop:dec12}
%would remain true with more general decay profiles 
%namely, $u_0\in L^2_\sigma(\R^2)$, $u_0(x)=o(\phi(x)^{-1})$ and $\nabla u_0(x)=o( \nabla(\psi(x)^{-1}))$,
%where $\phi$ and $\psi$ are arbitrary submultiplicative weight functions such that
%$0\le \phi(x)\le C(1+|x|)^\alpha$, $0\le \psi(x)\le C(1+|x|)^\beta$ (with 
%$0\le \alpha<3$ and $0\le \beta<4$) and such that
%\[
%\psi(x)\log(e+|x|)\le C\phi(x)^2.
%\]
% The latter technical restriction on the weights 
%could be removed if $u_0$ is assumed to be more regular, e.g. if  $u_0\in L^2_\sigma\cap L^\infty(\R^2)$.

\section{Acknowledgements}
The author would like to thank Prof. Tsuyoshi Yoneda for some interesting discussions that motivated the present study.

\end{document}